\DeclareMathOperator{\diag}{diag}
\begin{document}

\title[\hfil  Parareal for the CH Equation]
{Linear and Nonlinear Parareal Methods for the Cahn-Hilliard Equation}
 
\author[G. Garai. B. C. Mandal \hfilneg]
{Gobinda Garai, Bankim C. Mandal}

\address{Gobinda Garai \newline
School of Basic Sciences,
Indian Institute of Technology Bhubaneswar, India}
\email{gg14@iitbbs.ac.in}

\address{Bankim C. Mandal \newline
School of Basic Sciences,
Indian Institute of Technology Bhubaneswar, India}
\email{bmandal@iitbbs.ac.in} 

\thanks{Submitted.}
\subjclass[]{65M12, 65Y05, 65M15, 65Y20}
\keywords{Parallel-in-Time (PinT), Parallel computing, Convergence analysis, Cahn-Hilliard equation, Parareal method.}

\begin{abstract}
In this paper, we propose, analyze and implement efficient time parallel methods for the Cahn-Hilliard (CH) equation. It is of great importance to develop efficient numerical methods for the CH equation, given the range of applicability of the CH equation has. The CH
equation generally needs to be simulated for a very long time to get the solution of phase coarsening stage. Therefore it is desirable to accelerate the computation
using parallel method in time. We present linear and nonlinear Parareal methods for the CH equation depending on the choice of fine approximation. We illustrate our results by numerical experiments.
\end{abstract}

\maketitle
\numberwithin{equation}{section}
\newtheorem{theorem}{Theorem}[section]
\newtheorem{lemma}[theorem]{Lemma}
\newtheorem{definition}[theorem]{Definition}
\newtheorem{proposition}[theorem]{Proposition}
\newtheorem{remark}[theorem]{Remark}
\allowdisplaybreaks

\section{Introduction} \label{intro}
We are interested in designing time parallel algorithms for the Cahn-Hilliard equation 
\begin{equation}\label{CH}
\begin{cases}
\frac{\partial u}{\partial t} +\epsilon^2\Delta^2 u= \Delta f(u), & (x,t)\in\Omega\times(0,T],\\
u=0=\partial_{\nu}(\nabla u), & (x,t)\in\partial\Omega\times(0,T],\\
u(x,0)=u^0, & x\in \Omega, 
\end{cases} 
\end{equation}
where $\nu$ is the outward unit normal to $\partial\Omega$.
The CH equation has been suggested as a prototype to represent the evolution of a binary melted alloy below the critical temperature in \cite{Cahn,Hilliard}. The CH equation \eqref{CH} also arises from the Ginzburg-Landau energy functional:
\begin{equation}\label{energy}
\mathcal {E}(u):=\int_{\Omega}\left(F(u)+\frac{\epsilon^2}{2}\vert\nabla u\vert^2\right)d\boldsymbol{x},
\end{equation} 
by considering \eqref{CH} as a gradient flow $\frac{\partial u}{\partial t}=\Delta \frac{\delta \mathcal{E}}{\delta u}$, where $\frac{\delta \mathcal{E}}{\delta u}$ is the first variation of energy, $F(u) = 0.25(u^2 - 1)^2$ with $F'(u)=f(u)$, and $\epsilon (0<\epsilon\ll 1)$ is the thickness of the interface.  
The solution of \eqref{CH} involves two different dynamics, one is phase separation which is quick in time, and another is phase coarsening which is slow in time. The fine-scale phase regions are formed during the early stage of the dynamics of width $\epsilon$. Whereas during the phase coarsening stage, the solution tends to an equilibrium state which minimizes the system energy in \eqref{energy}.
By differentiating the energy functional $\mathcal{E}(u)$ and total mass $\int_{\Omega}u$ with respect to time $t$, we get 
\begin{equation}\label{energy minimization}
\frac{d}{dt}\mathcal{E}(u)\leq 0, \mbox{\hspace{1cm}} \frac{d}{dt}\int_{\Omega}u = 0.
\end{equation}
So the CH equation describes energy minimization and the total mass conservation while the system evolves. 

The existence of the solution of the CH equation \eqref{CH} can be seen form \cite{DuNicolaides} and also results for other variants of the CH equation are shown in \cite{ElliottZheng,liuzhao}. Various research have been done in finding numerical scheme for the CH equation to approximate the solution with either Dirichlet \cite{DuNicolaides,David} or Neumann boundary conditions \cite{elliott1987numerical,furihata2001stable,vollmayr2003fast,shin2011conservative} and references therein. Recently a new approach to approximate the solution of the CH equation has been proposed in \cite{yang2016linear, yang2017numerical} based on quadratization of the energy $\mathcal{E}(u)$ of the CH equation. A modification on energy quadratization approach yields a new method known as scalar auxiliary variable \cite{shen2018scalar}. A review on numerical treatment of the CH equation can be found in \cite{lee2014physical}. The possible application of CH equation as a model are: image inpainting \cite{EsedoAndrea}, tumour growth simulation \cite{Wise}, population dynamics \cite{cohen1981generalized}, dendritic growth \cite{kim1999universal}, planet formation \cite{tremaine2003origin}, etc.

The above described works are all in time stepping fashion for advancement of evolution of the CH equation. Therefore to get a solution of CH equation need to be solved sequentially over long time for capturing the long term behaviour of the CH equation, specially the phase coarsening stage.   
Consequently, it is of great importance to accelerate the simulation using parallel computation, which can be fulfilled by time parallel techniques. 
In last few decades there is a lot of efforts on formulating various type of time parallel techniques, for an overview see \cite{gander50year}. 
To speed-up the computation we construct the Parareal methods for the CH equation \eqref{CH}. The Parareal method \cite{lions2001parareal} is a well known iterative time parallel method, that can also be viewed as multiple shooting method or time-multigrid method; see \cite{gander2007analysis}. The method rely on computing fine and coarse resolution and eventually converge to fine resolution. It has been successfully applied to: fluid-structure interaction in \cite{farhat2003time}, Navier-Stokes equation in \cite{fischer2005parareal}, molecular-dynamics in \cite{baffico2002parallel}. The main objective of this work is to adapt the  Parareal algorithm for the CH equation \eqref{CH} and study the convergence behaviour. 

The rest of this paper is arranged as follows. We introduce in Section \ref{Section1} the time parallel algorithm for equation \eqref{CH}. In section \ref{Section2} we present the stability and convergence results. To illustrate our analysis, the accuracy and robustness of the proposed formulation, we show numerical results in Section \ref{Section3}.

\section{Parareal Method}\label{Section1}
To solve the following system of ODEs  
\begin{equation}\label{ode}
      \frac{du}{dt}=f(t, u),\; u(0)=u^0,\;  t\in(0,T],  
\end{equation}
Lions et al. proposed the Parareal algorithm in \cite{lions2001parareal}, where $f:\mathbb{R}^{+} \times \mathbb{R}^d\rightarrow\mathbb{R}^d$ is Lipschitz.
The method constitutes of the following strategy: first a non-overlapping decomposition of time domain $(0, T]$ into $N$ smaller subintervals of uniform size, i.e., $(0, T]=\cup_{n=1}^{N}[T_{n-1}, T_n]$ with $T_{n}- T_{n-1}=\Delta T=T/N$ is considered,  secondly each time slice $[T_{n-1}, T_n]$ is divided into $J$ smaller time slices with $\Delta t=\Delta T/J$, then a fine propagator $\mathcal{F}$ which is expensive but accurate, and a coarse propagator $\mathcal{G}$ which is cheap but may be inaccurate are assigned to compute the solution in fine grid and coarse grid respectively. Then the Parareal algorithm for \eqref{ode} starts with the initial approximation ${U}_n^0$ at $T_n$'s, obtained by the coarse operator $\mathcal{G}$ and solve the following prediction-correction scheme for $k=0, 1,...$
\begin{equation}\label{parareal}
    \begin{aligned}
      {U}_0^{k+1}& =u^0,\\
      {U}_{n+1}^{k+1}& =\mathcal{G}(T_{n+1}, T_n, {U}_n^{k+1})+\mathcal{F}(T_{n+1}, T_n, {U}_n^k)-\mathcal{G}(T_{n+1}, T_n, {U}_n^k),
    \end{aligned}       
\end{equation}
where operator $\mathcal{S}(T_{n+1}, T_n, {U}_n^{k})$ provides solution at $T_{n+1}$ by taking the initial solution $U_n^k$ at $T_n$ for $\mathcal{S}=\mathcal{F} \text{or}\; \mathcal{G}$. At current iteration ${U}_n^{k}$'s are known, hence one computes $\mathcal{F}(T_{n+1}, T_n, {U}_n^k)$ in parallel using $N$ processor.
The Parareal solution converges towards the fine resolution in finite steps. To get a practical parallel algorithm we should have $k\ll N$.

\subsection{Discretization and Formulation}
To formulate the Parareal method for the CH equation \eqref{CH} we first look into possible discretization of \eqref{CH} in both spatial and temporal variables.
Since the non-increasing of the total energy and mass conservation property \eqref{energy minimization} are essential features of the CH equation \eqref{CH},  they are expected to be preserved for long time simulation under any proposed numerical scheme as well. To deal with this, Eyre proposed an unconditionally gradient stable scheme in \cite{David,Eyre}. The idea is to split the homogeneous free energy $F(u)=\underbrace{\frac{u^4}{4} +1 }_\text{convex part}+ \underbrace{\frac{-u^2}{2}}_\text{concave part}$ into a sum of a convex and a concave term, and then treat the convex term implicitly and the concave term explicitly to obtain a nonlinear approximation for \eqref{CH} in 1D as:
\begin{equation}\label{approx1}
u_j^{n+1} - u_j^{n} =  \Delta t D_h (u_j^{n+1})^3  -\Delta t D_h u_j^{n} - \epsilon^2\Delta t D_h^2 u_j^{n+1},
\end{equation}
where $\Delta t$ is the time step and $D_h$ is the discrete Laplacian and the scheme is $O(\Delta t) + O(\Delta x^2)$ accurate \cite{David,Eyre}. The scheme \eqref{approx1} is unconditionally gradient stable, means the discrete energy is non-increasing for every time step $\Delta t$. To get a linear approximation of \eqref{CH}, the term $(u_j^{n+1})^3$ in \eqref{approx1} is rewritten as $(u_j^{n})^2 u_j^{n+1}$, which leads to the following linear approximation 
\begin{equation}\label{approx2}
u_j^{n+1} - u_j^{n} =  \Delta t D_h (u_j^{n})^2 u_j^{n+1} -\Delta t D_h u_j^{n}- \epsilon^2\Delta t D_h^2 u_j^{n+1}.
\end{equation}
This is also an unconditionally gradient stable scheme and has the same accuracy as the previous nonlinear scheme \eqref{approx1} \cite{David}. Another convex-concave splitting of $F(u)$ is $F(u)=\underbrace{u^2+\frac{1}{4}}_\text{convex part}+ \underbrace{\frac{u^4}{4}-\frac{3u^2}{2}}_\text{concave part},$  and by treating the convex part implicitly and concave part explicitly one obtains the following unconditionally gradient stable linear scheme \cite{Eyre} 
\begin{equation}\label{approx3}
u_j^{n+1} - u_j^{n} =  \Delta t D_h (u_j^{n})^3 -3\Delta t D_h u_j^{n} -\epsilon^2\Delta t D_h^2 u_j^{n+1} + 2\Delta t u_j^{n+1}.
\end{equation}

Now to employ the discrete Parareal method for the CH equation \eqref{CH} we denote ${U}_{n}^{k}$ as the approximation at $k$-th iteration containing $u(jh, T_n), j=2,3,...,N_x-1$, where $h$ is the spatial mesh size and $N_x$ is the number of discrete nodes in spatial domain. Now depending on the choice of coarse and fine operator we propose the following five versions of Parareal algorithms for the CH equation \eqref{CH}:
\begin{enumerate}
\item We fix both the fine propagator $\mathcal{F}$ and coarse propagator $\mathcal{G}$ to be the linear scheme in \eqref{approx2} in the Parareal iteration \eqref{parareal}. We call this algorithm PA-I.
\item We fix both the fine propagator $\mathcal{F}$ and coarse propagator $\mathcal{G}$ to be the linear scheme in \eqref{approx3} in the Parareal iteration \eqref{parareal}. We call this algorithm PA-II.
\item We fix the fine propagator $\mathcal{F}$ to be the linear scheme in \eqref{approx3} and coarse propagator $\mathcal{G}$ to be the linear scheme in \eqref{approx2} in the Parareal iteration \eqref{parareal}. We call this algorithm PA-III.
\item We fix the fine propagator $\mathcal{F}$ to be the nonlinear scheme in \eqref{approx1} and coarse propagator $\mathcal{G}$ to be the linear scheme in \eqref{approx2} in the Parareal iteration \eqref{parareal}. We call this algorithm NPA-I.
\item We fix both the fine propagator $\mathcal{F}$ and coarse propagator $\mathcal{G}$ to be the nonlinear scheme in \eqref{approx1} in the Parareal iteration \eqref{parareal}. We call this algorithm NPA-II.
\end{enumerate}
The first three algorithms are linear whereas the last two algorithms are nonlinear as either the fine solver or the coarse solver or both involve nonlinear scheme. Next we discuss the stability and convergence properties of the proposed Parareal algorithms.
\section{Stability and Convergence}\label{Section2}
First we rewrite the fine and coarse propagators in simplified operator form. For the approximation in \eqref{approx2} we have 
\begin{equation}\label{approx2_op_form}
\frac{W^{n+1} -W^{n}}{\Delta t} = D_h \diag(W^n)^2W^{n+1} - \epsilon^2D_h^2 W^{n+1}- D_hW^{n},
\end{equation}
where $W\in\mathbb{R}^{(N_x-2)}$ and the discrete Laplacian $D_h$ with Dirichlet boundary condition is the following
\begin{equation}\label{D_h}
D_h= \frac{1}{h ^2}
\begin{bmatrix}
    -2  & 1  \\
    1  & -2& 1   \\
    &\ddots&\ddots &\ddots \\
     & & 1  & -2& 1   \\
    & & & 1 & -2    
\end{bmatrix} \in\mathbb{R}^{(N_x-2) \times (N_x-2)}.
\end{equation}
Numerical tests suggest that the term $\diag(W^n)^2$ behaves as $I$, away from interface region, which also observed in \cite{David}. Thus for analysing purpose we consider $\diag(W^n)^2 \approx I$.  
Then \eqref{approx2_op_form} can be written as
$W^{n+1}= \left(I-\Delta t D_h +\epsilon^2\Delta t D_h^2\right)^{-1}\left(I - \Delta t D_h\right)W^{n},$
where $I$ is the identity matrix. Then the fine and coarse propagator corresponding to the scheme \eqref{approx2} can be written as 
\begin{subequations}\label{fine_coarse_op1}
\begin{align}
\mathcal{F}(T_{n+1}, T_n,U)&=\left(I-\Delta t D_h +\epsilon^2\Delta t D_h^2\right)^{-1}\left(I - \Delta t D_h\right) U, \; U\in\mathbb{R}^{N_x-2}, \label{fine_approx2}\\
\mathcal{G}(T_{n+1}, T_n,U)&=\left(I-\Delta T D_h +\epsilon^2\Delta T D_h^2\right)^{-1}\left(I - \Delta T D_h\right) U, \; U\in\mathbb{R}^{N_x-2},\label{coarse_approx2}
\end{align}
\end{subequations} 
respectively. Similarly we can write the fine and coarse propagator corresponding to the scheme \eqref{approx3} as 
\begin{subequations}\label{fine_coarse_op2}
\begin{align}
\mathcal{F}(T_{n+1}, T_n,U)&=\left(I-2\Delta t D_h +\epsilon^2\Delta t D_h^2\right)^{-1}\left(I - 2\Delta t D_h\right) U, \; U\in\mathbb{R}^{N_x-2}, \label{fine_approx3}\\
\mathcal{G}(T_{n+1}, T_n,U)&=\left(I-2\Delta T D_h +\epsilon^2\Delta T D_h^2\right)^{-1}\left(I - 2\Delta T D_h\right) U, \; U\in\mathbb{R}^{N_x-2}.\label{coarse_approx3}
\end{align}
\end{subequations} 
The matrix $D_h$ in \eqref{D_h} is symmetric negative definite; 
eigenvalues of $D_h$ are 
$
\lambda_p=\frac{2}{h^2}\left\lbrace \cos\left( \frac{p\pi}{N_x -1}\right)-1 \right\rbrace, p=1,\cdots , N_x-2. 
$
 $\lambda_p$'s are distinct and satisfy $\lambda_p< 0, \forall p$.
Now we define few matrices that we use later in the paper. The matrices are $P_i:=\left(I - i\Delta T D_h+ \epsilon^2\Delta T D_h^2\right)^{-1}\left(I - i\Delta T D_h \right)$ for $i=1, 2, 3$ and    
$ P_{J_i}:=\left[\left(I - \frac{i\Delta T}{J} D_h+ \epsilon^2\frac{\Delta T}{J} D_h^2\right)^{-1}\left(I - \frac{i\Delta T}{J} D_h \right)\right]^J$ for $i=1, 2$.
Before stating the stability and convergence results for the linear Parareal algorithms we first state and prove some auxiliary results.
\begin{lemma}\label{contra_lemma}
Let $J\in\mathbb{N}$ such that $J\geq 2, \Delta T>0, \epsilon>0$ and $y\in(0, \infty)$. Then the functions 
$g_i(y)=\frac{1+i\Delta T y}{1+i\Delta T y+\epsilon^2\Delta T y^2}$ for $i=1, 2, 3$ satisfy $g_i(y)\in(0, 1), \forall y$.

\end{lemma}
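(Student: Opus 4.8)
The plan is to argue directly from the signs of the numerator and denominator of $g_i$, treating all three cases $i=1,2,3$ uniformly, since the hypothesis $J\geq 2$ plays no role here (it is carried along only for the later statements). Fix $i\in\{1,2,3\}$ and $y\in(0,\infty)$, and write $N(y):=1+i\Delta T y$ for the numerator and $D(y):=1+i\Delta T y+\epsilon^2\Delta T y^2$ for the denominator, so that $g_i(y)=N(y)/D(y)$.

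First I would check positivity. Since $i\geq 1$, $\Delta T>0$ and $y>0$, we have $i\Delta T y>0$, hence $N(y)=1+i\Delta T y>1>0$. Likewise $\epsilon^2\Delta T y^2>0$ because $\epsilon>0$, $\Delta T>0$ and $y>0$; therefore $D(y)=N(y)+\epsilon^2\Delta T y^2>N(y)>0$. In particular $D(y)\neq 0$, so $g_i$ is well defined on $(0,\infty)$, and $g_i(y)=N(y)/D(y)>0$.

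For the upper bound, from $D(y)=N(y)+\epsilon^2\Delta T y^2$ with $\epsilon^2\Delta T y^2>0$ we get $D(y)>N(y)$, and dividing the strict inequality by the positive quantity $D(y)$ yields $g_i(y)=N(y)/D(y)<1$. Combining the two bounds gives $g_i(y)\in(0,1)$ for every $y\in(0,\infty)$, as claimed. There is no real obstacle in this argument; the only point requiring a moment's care is that the inequalities are \emph{strict}, which is exactly why the strict positivity of $y$ (and of $\epsilon,\Delta T$) is needed: it guarantees $\epsilon^2\Delta T y^2>0$ so that $g_i(y)$ stays strictly below $1$, and it rules out $y=0$ where $g_i$ would equal $1$.
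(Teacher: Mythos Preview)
Your proof is correct and is in fact cleaner than the paper's. You argue directly from the signs: with $N(y)=1+i\Delta T y>0$ and $D(y)=N(y)+\epsilon^2\Delta T y^2>N(y)>0$, the quotient is well defined and lies strictly between $0$ and $1$. This is airtight and uses exactly the positivity hypotheses on $y,\Delta T,\epsilon$.

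The paper instead appeals to continuity of $g_i$ on $(0,\infty)$ together with $\lim_{y\to 0^+}g_i(y)=1$ and $\lim_{y\to\infty}g_i(y)=0$. As written, that argument is incomplete: a continuous function with those endpoint limits need not remain in $(0,1)$ (e.g.\ $f(y)=(1+2y)e^{-y}$ has $f(0^+)=1$, $f(\infty)=0$, yet $f(1)=3/e>1$). One needs an extra ingredient---monotonicity, or precisely the numerator/denominator comparison you supply---to close the gap. So your direct approach not only differs from the paper's, it actually furnishes the missing step.
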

\begin{proof}
It is clear that each $g_i$ is continuous in $(0, \infty)$, and $\lim\limits_{y\rightarrow 0^+}g_i(y)=1$ and $\lim\limits_{y\rightarrow \infty}g_i(y)=0$, so we have $g_i(y)\in(0, 1),\forall y$.
%
%
\end{proof}
\begin{lemma}\label{contra_lemma2}
Let $J\in\mathbb{N}$ such that $J\geq 2, \Delta T>0, \epsilon>0$ and $y\in(0, \infty)$. Then the followings hold 
\begin{enumerate}
\item[(i)] 
for $i=1, 2$ the function $\phi_i(y):= \left( g_i(y/J)\right) ^J-g_i(y)$ satisfies $\vert\phi_i(y)\vert<1, \forall y$. \label{st1}
\item[(ii)] 
the function $\phi_3(y):= \left( g_2(y/J)\right) ^J-g_1(y)$ satisfies $\vert\phi_3(y)\vert<1, \forall y$.\label{st2}
\end{enumerate}
\end{lemma}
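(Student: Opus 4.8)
The statement follows almost immediately from Lemma \ref{contra_lemma}, and the plan is to exploit the fact that both terms defining each $\phi$ lie in the open interval $(0,1)$. Concretely, by Lemma \ref{contra_lemma} we have $g_i(z)\in(0,1)$ for every $z\in(0,\infty)$ and every $i\in\{1,2,3\}$. Since $J\in\mathbb{N}$ and $y\in(0,\infty)$ force $y/J\in(0,\infty)$, applying the lemma at the point $y/J$ gives $g_i(y/J)\in(0,1)$, and raising a number in $(0,1)$ to the positive integer power $J$ keeps it in $(0,1)$, so $\bigl(g_i(y/J)\bigr)^J\in(0,1)$.

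For part (i), I would then write $\phi_i(y)=A-B$ with $A:=\bigl(g_i(y/J)\bigr)^J\in(0,1)$ and $B:=g_i(y)\in(0,1)$. From $0<A<1$ and $0<B<1$ one gets $A-B<1-0=1$ and $A-B>0-1=-1$, hence $-1<\phi_i(y)<1$, i.e. $|\phi_i(y)|<1$ for all $y\in(0,\infty)$. Part (ii) is handled identically: write $\phi_3(y)=\bigl(g_2(y/J)\bigr)^J-g_1(y)$, note that $\bigl(g_2(y/J)\bigr)^J\in(0,1)$ and $g_1(y)\in(0,1)$ by the same reasoning, and conclude $|\phi_3(y)|<1$.

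\textbf{Main obstacle.} There is essentially no obstacle here; the result is a direct corollary of Lemma \ref{contra_lemma}, the only points to check being that $y/J$ is still a positive argument (so the lemma applies) and that the $J$-th power preserves membership in $(0,1)$. It is worth remarking that this crude bound $|\phi_i|<1$ is all that is needed for the contraction estimates of the Parareal iteration that follow, so no sharper control of $\phi_i$ is pursued.
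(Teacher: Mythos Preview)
Your proposal is correct and follows essentially the same route as the paper: both arguments reduce $|\phi_i(y)|<1$ to the two inequalities $(g_i(y/J))^J<1+g_i(y)$ and $(g_i(y/J))^J>g_i(y)-1$, and then invoke Lemma~\ref{contra_lemma} to place each term in $(0,1)$. The only cosmetic difference is that the paper verifies $g_i(y)-1<0$ by explicitly writing it as the negative fraction $-\epsilon^2\Delta T y^2/(1+i\Delta T y+\epsilon^2\Delta T y^2)$, whereas you simply cite $g_i(y)\in(0,1)$ from Lemma~\ref{contra_lemma}; the content is identical.
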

\begin{proof}
First we prove the statement ($i$). We have $\vert \phi_i (y)\vert <1 \iff \left( g_i(y/J)\right) ^J< 1+g_i(y) \;\&\; \left( g_i(y/J)\right) ^J>-1+g_i(y)$. Using Lemma \ref{contra_lemma} we have $g_i^J(y/J)\in(0, 1)$ and thus we have  $g_i^J(y/J)< 1+g_i(y), \forall y$. The term $-1+g_i(y)=\frac{-\epsilon^2\Delta Ty^2}{1+i\Delta T y+\epsilon^2\Delta Ty^2}<0$, hence $\left( g(y/J)\right) ^J>-1+g_i(y)$ holds. Similarly we can get the result ($ii$). 
\end{proof}

\begin{lemma}[Matrix inverse]\label{mat_inv}
Let $n\in \mathbb{N}, \beta\in\mathbb{R}^+$ then 
\begin{equation}\label{inv}
M(\beta)^{-1}:=
\begin{bmatrix}
    1  & 0  \\
    -\beta & 1& 0   \\
    &\ddots&\ddots &\ddots \\
     & & -\beta  & 1& 0   \\
    & & & -\beta & 1    
\end{bmatrix}^{-1}_{n \times n} 
=
\begin{bmatrix}
    1  & 0  \\
    \beta & 1& 0   \\
    \vdots & &\ddots &\ddots \\
     \beta^{n-2}& \beta^{n-3}& \cdots  & 1 &0  \\
    \beta^{n-1}& \beta^{n-2} & \cdots & \beta & 1    
\end{bmatrix}_{n \times n} 
\end{equation}
\end{lemma}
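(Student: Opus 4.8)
The plan is to verify the claimed formula directly, exploiting the fact that the matrix to be inverted is bidiagonal, so the multiplication check collapses to a one-line telescoping computation. Write $M(\beta) = I_n - \beta S$, where $S\in\mathbb{R}^{n\times n}$ is the lower shift matrix with $(S)_{i,i-1}=1$ for $i=2,\dots,n$ and all other entries zero, and let $L$ denote the lower-triangular Toeplitz matrix on the right-hand side of \eqref{inv}, i.e. $(L)_{ij}=\beta^{i-j}$ for $i\ge j$ and $(L)_{ij}=0$ for $i<j$. Since $M(\beta)$ is square, it suffices to check $M(\beta)L=I_n$.

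First I would compute the $(i,j)$ entry of $M(\beta)L$. Because $M(\beta)$ has only the main diagonal and the first subdiagonal, one gets $(M(\beta)L)_{ij}=(L)_{ij}-\beta\,(L)_{i-1,j}$ for $i\ge 2$, and $(M(\beta)L)_{1j}=(L)_{1j}$. Then I would split into cases: for $i<j$ both $(L)_{ij}$ and $(L)_{i-1,j}$ vanish, so the entry is $0$; for $i=j$ one has $(L)_{ii}=1$ and $(L)_{i-1,i}=0$, giving the entry $1$; and for $i>j$ one gets $\beta^{i-j}-\beta\cdot\beta^{(i-1)-j}=0$. The first-row case $i=1$ is immediate since $(L)_{1j}=\delta_{1j}$. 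Hence $M(\beta)L=I_n$.

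A more transparent alternative, which I would likely present as the main argument, is to observe that $S$ is nilpotent with $S^n=0$, so the Neumann series $\sum_{k=0}^{n-1}\beta^k S^k$ terminates and equals $(I_n-\beta S)^{-1}=M(\beta)^{-1}$; identifying $S^k$ as the matrix carrying $1$'s on the $k$-th subdiagonal (and $0$'s elsewhere) shows this finite sum is precisely $L$.

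There is essentially no genuine obstacle here; the only point requiring a little care is the index bookkeeping near the first row and column, where the subdiagonal contribution is absent, and both formulations dispose of it cleanly. I would give the Neumann-series computation as the proof and note the direct multiplication $M(\beta)L=I_n$ as an independent verification. (The hypothesis $\beta\in\mathbb{R}^+$ is not actually needed for the identity, which holds for arbitrary $\beta$.)
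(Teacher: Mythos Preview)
Your argument is correct. Both the entry-wise verification $M(\beta)L=I_n$ and the Neumann-series computation $(I_n-\beta S)^{-1}=\sum_{k=0}^{n-1}\beta^k S^k$ are valid and complete; the case analysis near the first row is handled properly, and your remark that the positivity of $\beta$ plays no role is accurate.

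The paper proceeds differently: it argues by induction on the dimension $n$, writing $M(\beta)_{(l+1)\times(l+1)}$ in $2\times 2$ block form with the upper-left $l\times l$ block equal to $M(\beta)_{l\times l}$, and then invoking the block-inverse (Schur complement) formula together with the inductive hypothesis to read off the last row and column of the inverse. Your direct/Neumann-series approach is shorter and more transparent here, since the nilpotency of the shift matrix reduces the inverse to a finite polynomial and avoids any recursion. The paper's inductive block argument, while heavier for this particular bidiagonal matrix, has the mild advantage of being a template that would still apply to lower-triangular Toeplitz matrices with more complicated subdiagonal structure, where an explicit Neumann sum may be less clean.
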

\begin{proof}
We prove the result \eqref{inv} by induction. Clearly the statement is true for $n=2$. Let us assume that the result \eqref{inv} is true for $n=l$. Then for $n=l+1$ the matrix $M(\beta)$ can be written as the following block form 
\begin{equation}
M(\beta)=
\begin{bmatrix}
A_{l\times l} & B_{l\times 1}\\
C_{1\times l} & D_{1\times 1}
\end{bmatrix}_{(l+1)\times(l+1)},
\end{equation}
where $A=M(\beta)_{l\times l}, B=[\textbf{0}]_{l\times 1}, C=[\textbf{0}, -\beta]_{1\times l}, D=[1]_{1\times 1}$. As we know the inverse of $A$ we have 
\begin{equation}
M(\beta)^{-1}_{(l+1)\times(l+1)}=
\begin{bmatrix}
A^{-1} + A^{-1}BS^{-1}CA^{-1} & -A^{-1}BS^{-1}\\
-S^{-1}CA^{-1} & S^{-1}
\end{bmatrix},
\end{equation}
where $S=D-CA^{-1}B$. Clearly $S^{-1}=[1]_{1\times 1}$, and thus we have $A^{-1}BS^{-1}CA^{-1}=[\textbf{0}]_{l\times l}, -A^{-1}BS^{-1}=[\textbf{0}]_{l\times 1}$, and $-S^{-1}CA^{-1}=[\beta^l, \beta^{l-1},\cdots,\beta]_{1\times l}$. Hence we have the Lemma.
\end{proof}

\begin{lemma}[Matrix power]\label{mat_power}
Let $\beta>0$ and $\mathbb{T}(\beta)$ be a strictly lower triangular Toeplitz matrix of size $N$ whose elements are defined by its first column   
\begin{equation*}
\mathbb{T}_{i,1}=
\begin{cases}
     0  \;\;\text{if}\;\; i=1,\\
     \beta^{i-2}  \;\;\text{if}\;\; 2\leq i \leq N . 
\end{cases}
\end{equation*}
Then the $i$-th element of the first column of the $k$-th power of $\mathbb{T}$ is 
\begin{equation*}
\mathbb{T}_{i,1}^k=
\begin{cases}
     0  \qquad\qquad \quad\text{if}\;\; 1\leq i \leq k,\\
     \binom{i-2}{k-1}\beta^{i-1-k}  \;\;\;\text{if}\;\; k+1\leq i \leq N . 
\end{cases}
\end{equation*}
\end{lemma}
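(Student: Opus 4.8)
The plan is to argue by induction on the power $k$, using only the Toeplitz structure of $\mathbb{T} := \mathbb{T}(\beta)$ together with Pascal's rule. The first step is to write down the entries of $\mathbb{T}$ itself: since $\mathbb{T}$ is strictly lower triangular and constant along diagonals with the prescribed first column, one has $\mathbb{T}_{i,j} = \beta^{\,i-j-1}$ for $i > j$ and $\mathbb{T}_{i,j} = 0$ for $i \le j$. This makes the base case $k=1$ immediate, since $\mathbb{T}_{i,1} = 0$ for $i=1$ and $\mathbb{T}_{i,1} = \beta^{\,i-2} = \binom{i-2}{0}\beta^{\,i-2}$ for $2 \le i \le N$, which is exactly the asserted formula with $k=1$.

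For the inductive step, assume the formula holds for some $k \ge 1$ and compute the first column of $\mathbb{T}^{k+1} = \mathbb{T}\,\mathbb{T}^k$ entrywise, $\mathbb{T}^{k+1}_{i,1} = \sum_{j=1}^{N}\mathbb{T}_{i,j}\,\mathbb{T}^{k}_{j,1}$. By the inductive hypothesis the summand vanishes unless $j \ge k+1$, and by the structure of $\mathbb{T}$ it vanishes unless $j \le i-1$; hence the sum is empty, i.e.\ $\mathbb{T}^{k+1}_{i,1} = 0$, whenever $i \le k+1$, which disposes of the first case. For $i \ge k+2$ the surviving range is $k+1 \le j \le i-1$, and
\[
\mathbb{T}^{k+1}_{i,1} \;=\; \sum_{j=k+1}^{i-1} \beta^{\,i-j-1}\binom{j-2}{k-1}\beta^{\,j-1-k} \;=\; \beta^{\,i-2-k}\sum_{j=k+1}^{i-1}\binom{j-2}{k-1}.
\]
Here the exponent of $\beta$ collapses to the $j$-independent value $i-2-k$ precisely because $\mathbb{T}$ is Toeplitz, and after the substitution $m = j-2$ the remaining sum is $\sum_{m=k-1}^{i-3}\binom{m}{k-1}$.

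The crux is then the hockey-stick identity $\sum_{m=r}^{n}\binom{m}{r} = \binom{n+1}{r+1}$ (itself a one-line induction on $n$ via Pascal's rule), applied with $r = k-1$ and $n = i-3$, which yields $\binom{i-2}{k}$. Therefore $\mathbb{T}^{k+1}_{i,1} = \binom{i-2}{k}\beta^{\,i-2-k} = \binom{i-2}{(k+1)-1}\beta^{\,i-1-(k+1)}$, the claimed formula at level $k+1$, and the induction is complete. I expect the only real obstacle to be bookkeeping: keeping the index ranges in the convolution sum straight so that the split into the empty-sum case $i \le k+1$ and the generic case is clean, and confirming that the power of $\beta$ is indeed independent of the summation variable. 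Everything else reduces to the standard combinatorial identity.
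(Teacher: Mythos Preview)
Your induction argument is correct: the Toeplitz structure makes the exponent of $\beta$ collapse to $i-2-k$, and the hockey-stick identity $\sum_{m=r}^{n}\binom{m}{r}=\binom{n+1}{r+1}$ finishes the count cleanly. The paper does not actually prove this lemma at all---it simply cites Gander and Vandewalle (2007)---so your self-contained argument is more informative than what appears in the text; if anything, it could replace the bare citation.
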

\begin{proof}
See \cite{gander2007analysis}.
\end{proof}
\begin{lemma}\label{mat_norm}
For $0<\beta<1$ the infinity norm of $\mathbb{T}(\beta)^k$ is given by 

$\parallel\mathbb{T}^k\parallel_{\infty} \leq \min \left\{ \left(\frac{1-\beta^{N-1}}{1-\beta}\right)^k,  \binom{N-1}{k} \right\}$
\end{lemma}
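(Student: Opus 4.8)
The plan is to exploit the Toeplitz structure. Since $\mathbb{T}(\beta)$ is strictly lower triangular and Toeplitz, so is every power $\mathbb{T}(\beta)^k$, and hence $\mathbb{T}(\beta)^k$ is completely determined by its first column, whose entries are given explicitly in Lemma \ref{mat_power}. For a lower triangular Toeplitz matrix with nonnegative first column $(c_1,\dots,c_N)^{\!\top}$, the $i$-th row is $(c_i,c_{i-1},\dots,c_1,0,\dots,0)$, so the $i$-th absolute row sum equals $c_1+\cdots+c_i$, which is nondecreasing in $i$ and therefore maximal for $i=N$. Consequently
\[
\parallel\mathbb{T}^k\parallel_{\infty}=\sum_{i=1}^{N}\left|\mathbb{T}_{i,1}^k\right|=\sum_{i=k+1}^{N}\binom{i-2}{k-1}\beta^{\,i-1-k}.
\]

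For the first bound I would simply invoke submultiplicativity of the operator norm. Taking $k=1$ above (equivalently, reading off the first column of $\mathbb{T}$ directly) gives $\parallel\mathbb{T}\parallel_{\infty}=1+\beta+\cdots+\beta^{N-2}=\frac{1-\beta^{N-1}}{1-\beta}$, whence $\parallel\mathbb{T}^k\parallel_{\infty}\le\parallel\mathbb{T}\parallel_{\infty}^{\,k}=\left(\frac{1-\beta^{N-1}}{1-\beta}\right)^{k}$.

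For the second bound, since $0<\beta<1$ we have $\beta^{\,i-1-k}\le 1$ for all $i\ge k+1$, so
\[
\parallel\mathbb{T}^k\parallel_{\infty}\le\sum_{i=k+1}^{N}\binom{i-2}{k-1}=\sum_{j=k-1}^{N-2}\binom{j}{k-1}=\binom{N-1}{k},
\]
where the last step is the hockey-stick identity $\sum_{j=r}^{m}\binom{j}{r}=\binom{m+1}{r+1}$ applied with $r=k-1$ and $m=N-2$. Taking the minimum of the two estimates gives the claim.

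There is no substantial obstacle here: the two mildly delicate points are the identification of the infinity norm of a lower triangular Toeplitz matrix with the $\ell^1$-norm of its first column (handled by the monotonicity of row sums noted above, using that all entries of the first column of $\mathbb{T}^k$ are nonnegative) and the correct index bookkeeping in the hockey-stick summation; both are routine.
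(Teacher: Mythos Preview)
Your argument is correct: the identification of $\parallel\mathbb{T}^k\parallel_{\infty}$ with the $\ell^1$-norm of the first column (via the nonnegative lower-triangular Toeplitz structure), the submultiplicativity bound for the first estimate, and the hockey-stick summation for the second are all valid and cleanly executed. The paper itself does not give a proof but simply cites \cite{gander2007analysis}; your self-contained derivation is precisely the standard argument found there, so in substance you have reproduced the intended proof rather than taken a different route.
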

\begin{proof}
See \cite{gander2007analysis}.
\end{proof}
\begin{theorem}[Stability of PA-I]\label{thm1}
The algorithm PA-I is stable, i.e., for each $n$ and $k$ the Parareal iteration satisfies $\parallel U_{n+1}^{k+1}\parallel \leq  \parallel u^{0}\parallel + (n+1)\left(\max\limits_{0\leq j\leq n} \parallel {U}_{ j}^{k}\parallel \right)$.
\end{theorem}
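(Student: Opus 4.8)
The plan is to unwind the Parareal recursion \eqref{parareal} for PA-I and bound the growth of the iterates by using the fact that the relevant propagators are contractions in the Euclidean norm. First I would observe that for PA-I both $\mathcal{F}$ and $\mathcal{G}$ are the linear scheme \eqref{approx2} (with $\diag(W^n)^2\approx I$), so $\mathcal{F}(T_{n+1},T_n,U)=P_{J_1}U$ and $\mathcal{G}(T_{n+1},T_n,U)=P_1 U$, where $P_1$ and $P_{J_1}$ are the matrices defined just before Lemma \ref{contra_lemma}. Since $D_h$ is symmetric negative definite with eigenvalues $\lambda_p<0$, writing $y=-\lambda_p>0$ shows that the eigenvalues of $P_1$ are exactly $g_1(y)$ and those of $P_{J_1}$ are $\bigl(g_1(y/J)\bigr)^J$; by Lemma \ref{contra_lemma} both lie in $(0,1)$, so $\|P_1\|\le 1$ and $\|P_{J_1}\|\le 1$ in the spectral norm.

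Next I would rewrite the correction step as
\begin{equation*}
U_{n+1}^{k+1}=P_1 U_n^{k+1}+\bigl(P_{J_1}-P_1\bigr)U_n^{k},
\end{equation*}
take norms, and use $\|P_1\|\le 1$ together with $\|P_{J_1}-P_1\|\le\|P_{J_1}\|+\|P_1\|\le 2$ — or, more sharply, the observation that $P_{J_1}-P_1$ is simultaneously diagonalizable with eigenvalues $\phi_1(y)=\bigl(g_1(y/J)\bigr)^J-g_1(y)$, which satisfy $|\phi_1(y)|<1$ by Lemma \ref{contra_lemma2}(i), so that $\|P_{J_1}-P_1\|<1$. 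This yields the one-step bound
\begin{equation*}
\|U_{n+1}^{k+1}\|\le \|U_n^{k+1}\|+\|U_n^{k}\|\le \|U_n^{k+1}\|+\max_{0\le j\le n}\|U_j^{k}\|.
\end{equation*}
Then I would induct on $n$: since $U_0^{k+1}=u^0$, iterating the inequality from $n$ down to $0$ gives $\|U_{n+1}^{k+1}\|\le\|u^0\|+(n+1)\max_{0\le j\le n}\|U_j^{k}\|$, which is exactly the claimed estimate.

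The only genuinely delicate point is justifying $\|P_1\|\le 1$ and $\|P_{J_1}-P_1\|\le 1$ as operator norms rather than merely spectral radii; this is immediate here because $D_h$ is symmetric, hence $P_1$, $P_{J_1}$, and their difference are all symmetric (polynomials/rational functions of the symmetric matrix $D_h$), so their spectral norms equal their spectral radii, and Lemmas \ref{contra_lemma} and \ref{contra_lemma2} pin these down. Everything else is a routine triangle-inequality estimate and a finite induction, so I do not expect any real obstacle beyond being careful that the $\diag(W^n)^2\approx I$ simplification is the standing assumption under which "stability of PA-I" is being asserted.
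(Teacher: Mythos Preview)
Your proposal is correct and follows essentially the same route as the paper: rewrite the correction step as $U_{n+1}^{k+1}=P_1 U_n^{k+1}+(P_{J_1}-P_1)U_n^{k}$, invoke Lemmas~\ref{contra_lemma} and \ref{contra_lemma2} to bound $\|P_1\|$ and $\|P_{J_1}-P_1\|$ by~$1$, and then sum the resulting recursion using $U_0^{k+1}=u^0$. Your added remark that symmetry of $D_h$ makes the spectral norm coincide with the spectral radius is a welcome justification that the paper leaves implicit.
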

\begin{proof}
Using the fine propagator \eqref{fine_approx2} and coarse propagator \eqref{coarse_approx2} in the Parareal scheme \eqref{parareal} we have 
\begin{equation}\label{stab1}
\begin{aligned}[b]
{U}_{n+1}^{k+1}& =P_1{U}_{n}^{k+1} + \left( P_{J_1} - P_1\right){U}_{n}^{k}, \\
\xRightarrow{\text{by taking norm}} \parallel {U}_{n+1}^{k+1}\parallel &  \leq \parallel P_1\parallel \parallel {U}_{n}^{k+1}\parallel + \parallel  P_{J_1} - P_1 \parallel \parallel {U}_{n}^{k}\parallel\\
& \leq \parallel {U}_{n}^{k+1}\parallel + \parallel {U}_{n}^{k}\parallel,
\end{aligned}
\end{equation}
where in the second inequality \eqref{stab1} we use Lemma \ref{contra_lemma} \& \ref{contra_lemma2}.
By the repeated application of the recurrence in \eqref{stab1} for $n$ and taking the sum we have 
\begin{equation*}
\parallel {U}_{n+1}^{k+1}\parallel -\parallel {U}_{0}^{k+1}\parallel   \leq   \sum_{j=0}^n \parallel {U}_{j}^{k}\parallel \leq  (n+1)\left(\max\limits_{0\leq j\leq n} \parallel {U}_{ j}^{k}\parallel \right).
\end{equation*}
Then using $U_0^{k+1}=u^0$ we get the stated result.
\end{proof}
\begin{theorem}[Convergence of PA-I]\label{thm2}
The algorithm PA-I is convergent, i.e., for the error ${E}_{n+1}^{k+1}=U(T_{n+1})-{U}_{n+1}^{k+1}$ the algorithm PA-I satisfies the following error estimate  $\max\limits_{1\leq j\leq N}\parallel {E}_{j}^{k+1}\parallel \leq \alpha^{k+1}\min \left\{ \left(\frac{1-\beta^{N-1}}{1-\beta}\right)^{k+1},  \binom{N-1}{k+1} \right\} \max\limits_{1\leq j\leq N}\parallel {E}_{j}^{0}\parallel$, where $\alpha=\parallel P_{J_1} -P_1 \parallel, \beta=\parallel P_1 \parallel$.
\end{theorem}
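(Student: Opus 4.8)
The plan is to carry out the classical superlinear Parareal estimate (cf.\ \cite{gander2007analysis}) in the operator form \eqref{fine_coarse_op1}. First I would set up the error recursion. Inserting \eqref{fine_approx2} and \eqref{coarse_approx2} into \eqref{parareal}, the PA-I update reads $U_{n+1}^{k+1} = P_1 U_n^{k+1} + (P_{J_1} - P_1) U_n^k$. Since the fine propagator reproduces the fine solution exactly, $U(T_{n+1}) = P_{J_1} U(T_n)$, and trivially $U(T_{n+1}) = P_1 U(T_n) + (P_{J_1} - P_1) U(T_n)$ as well; subtracting from the update gives
\begin{equation*}
E_{n+1}^{k+1} = P_1\, E_n^{k+1} + (P_{J_1} - P_1)\, E_n^k, \qquad E_0^{k} = 0 \quad \text{for all } k,
\end{equation*}
the initial condition holding because $U_0^k = u^0 = U(T_0)$ for every $k$ (here $U(T_n)$ denotes the fine numerical solution, which is the fixed point of the iteration).

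Passing to norms with $\alpha = \|P_{J_1} - P_1\|$, $\beta = \|P_1\|$ and writing $e_n^k := \|E_n^k\|$, this yields the scalar two-index inequality $e_{n+1}^{k+1} \le \beta\, e_n^{k+1} + \alpha\, e_n^k$ with $e_0^k = 0$ (so in particular $e_1^{k+1}=0$). Collecting the slice errors into $\mathbf{e}^k := (e_1^k, \dots, e_N^k)^{\top} \in \mathbb{R}^N$, this reads componentwise as $(I - \beta L)\,\mathbf{e}^{k+1} \le \alpha\, L\,\mathbf{e}^k$, where $L$ is the nilpotent down-shift matrix on $\mathbb{R}^N$. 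Since $I - \beta L$ is the matrix $M(\beta)$ of Lemma \ref{mat_inv} and $M(\beta)^{-1}$ has only nonnegative entries, multiplying through preserves the inequality and gives $\mathbf{e}^{k+1} \le \alpha\, \mathbb{T}(\beta)\, \mathbf{e}^k$, where $\mathbb{T}(\beta) = M(\beta)^{-1} L$ is exactly the strictly lower triangular Toeplitz matrix of Lemma \ref{mat_power}. Because $\alpha \ge 0$ and $\mathbb{T}(\beta)$ has nonnegative entries, iterating this componentwise bound down to $k = 0$ preserves the ordering and yields $\mathbf{e}^{k+1} \le \alpha^{k+1}\, \mathbb{T}(\beta)^{k+1}\, \mathbf{e}^0$.

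Taking the $\infty$-norm in the time-slice index and using monotonicity of $\|\cdot\|_\infty$ on nonnegative vectors,
\begin{equation*}
\max_{1\le j\le N}\|E_j^{k+1}\| = \|\mathbf{e}^{k+1}\|_\infty \le \alpha^{k+1}\,\|\mathbb{T}(\beta)^{k+1}\|_\infty\,\|\mathbf{e}^0\|_\infty = \alpha^{k+1}\,\|\mathbb{T}(\beta)^{k+1}\|_\infty\,\max_{1\le j\le N}\|E_j^0\|,
\end{equation*}
and Lemma \ref{mat_norm} bounds $\|\mathbb{T}(\beta)^{k+1}\|_\infty$ by $\min\{((1-\beta^{N-1})/(1-\beta))^{k+1},\ \binom{N-1}{k+1}\}$, which is the asserted estimate. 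The hypothesis $0<\beta<1$ of Lemma \ref{mat_norm} is met: $P_1$ is symmetric (a rational function of the symmetric matrix $D_h$), its eigenvalues are $g_1(-\lambda_p)$, and $g_1(-\lambda_p)\in(0,1)$ by Lemma \ref{contra_lemma} since $-\lambda_p>0$, so $\beta = \|P_1\|_2 = \max_p g_1(-\lambda_p)\in(0,1)$.

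The computation is essentially a routine unrolling; the points that demand care are the interplay of the two norms in play (the spectral norm on $\mathbb{R}^{N_x-2}$ that defines $\alpha,\beta$ versus the matrix $\infty$-norm acting on the $N$-vector of slice errors), the nonnegativity of the entries of $M(\beta)^{-1}$ and $\mathbb{T}(\beta)$ which is what lets the componentwise inequalities be iterated, and the verification that $\beta<1$ so that Lemma \ref{mat_norm} applies.
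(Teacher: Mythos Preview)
Your proof is correct and follows essentially the same route as the paper: derive the error recursion $E_{n+1}^{k+1}=P_1E_n^{k+1}+(P_{J_1}-P_1)E_n^k$, pass to norms, rewrite the resulting scalar recursion as $\mathbf{e}^{k+1}\le \alpha\,\mathbb{T}(\beta)\,\mathbf{e}^k$ via Lemma~\ref{mat_inv}, iterate, and apply Lemmas~\ref{mat_power} and~\ref{mat_norm}. If anything, you are more explicit than the paper about the points that make the argument go through (nonnegativity of $M(\beta)^{-1}$ so that the componentwise inequality survives inversion, and the verification $\beta<1$ needed for Lemma~\ref{mat_norm}).
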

\begin{proof}
From the Parareal scheme \eqref{parareal} we have 
\begin{equation}\label{err1}
\begin{aligned}
U(T_{n+1})-{U}_{n+1}^{k+1}& = U(T_{n+1})-\mathcal{G}(T_{n+1},T_n,U_n^{k+1})-\mathcal{F}(T_{n+1},T_n,U_n^{k})+ \mathcal{G}(T_{n+1},T_n,U_n^{k})\\
&= \mathcal{F}(T_{n+1}, T_n, {U}_n) - \mathcal{G}(T_{n+1}, T_n, {U}_n)\\
& -\left( \mathcal{F}(T_{n+1}, T_n, {U}_n^k)- \mathcal{G}(T_{n+1}, T_n, {U}_n^{k}) \right) \\
& +\mathcal{G}(T_{n+1}, T_n, {U}_n)-\mathcal{G}(T_{n+1}, T_n, {U}_n^{k+1}).
\end{aligned}
\end{equation}
Using the fine propagator \eqref{fine_approx2} and coarse propagator \eqref{coarse_approx2} in \eqref{err1} we have the recurrence relation for the error ${E}_{n+1}^{k+1}$  as
\begin{equation}\label{err2}
\begin{aligned}
{E}_{n+1}^{k+1}& =\left( P_{J_1} - P_1\right){E}_{n}^{k} + P_1{E}_{n}^{k+1},\\
\xRightarrow{\text{by taking norm}} \parallel {E}_{n+1}^{k+1} \parallel & \leq \parallel P_{J_1} - P_1\parallel \parallel {E}_{n}^{k}\parallel + \parallel P_1\parallel  \parallel {E}_{n}^{k+1}\parallel.
\end{aligned}
\end{equation}
The recurrence relation in \eqref{err2} can be written in the following matrix form 
\begin{equation}\label{err_matrix}
\begin{aligned}[b]
\begin{bmatrix}
\parallel {E}_{1} \parallel\\
\parallel {E}_{2} \parallel\\
\vdots\\
\parallel {E}_{N-1} \parallel\\
\parallel {E}_{N} \parallel
\end{bmatrix}^{k+1}
& \leq 
\begin{bmatrix}
    1  & 0  \\
    -\beta & 1& 0   \\
    &\ddots&\ddots &\ddots \\
     & & -\beta  & 1& 0   \\
    & & & -\beta & 1    
\end{bmatrix}^{-1}
\begin{bmatrix}
    0  & 0  \\
    \alpha & 0& 0   \\
    &\ddots&\ddots &\ddots \\
     & & \alpha  & 0& 0   \\
    & & & \alpha & 0    
\end{bmatrix}
\begin{bmatrix}
\parallel {E}_{1} \parallel\\
\parallel {E}_{2} \parallel\\
\vdots\\
\parallel {E}_{N-1} \parallel\\
\parallel {E}_{N} \parallel
\end{bmatrix}^{k}\\
&\leq \alpha 
\begin{bmatrix}
    0  & 0  \\
    1 & 0& 0   \\
    \vdots & &\ddots &\ddots \\
     \beta^{N-3}& \beta^{N-4}& \cdots  & 0 &0  \\
    \beta^{N-2}& \beta^{N-3} & \cdots & 1 & 0    
\end{bmatrix}
\begin{bmatrix}
\parallel {E}_{1} \parallel\\
\parallel {E}_{2} \parallel\\
\vdots\\
\parallel {E}_{N-1} \parallel\\
\parallel {E}_{N} \parallel
\end{bmatrix}^{k},
\end{aligned}
\end{equation}
where $\alpha=\parallel P_{J_1} -P_1 \parallel, \beta=\parallel P_1 \parallel$, and on second inequality we use the Lemma \ref{mat_inv}. Now observe that the iteration matrix appearing in \eqref{err_matrix} is Nilpotent, so for $k=N$ we have finite step convergence. Using Lemma \ref{mat_power} and Lemma \ref{mat_norm} in \eqref{err_matrix} we get the stated error contraction relation.
\end{proof}
\begin{theorem}[Stability of PA-II]\label{thm3}
The algorithm PA-II is stable, i.e., for each $n$ and $k$ the Parareal iteration satisfies $\parallel U_{n+1}^{k+1}\parallel \leq  \parallel u^{0}\parallel+ (n+1)\left(\max\limits_{0\leq j\leq n} \parallel {U}_{ j}^{k}\parallel \right)$.
\end{theorem}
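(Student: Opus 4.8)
The plan is to copy the structure of the proof of Theorem~\ref{thm1} verbatim, with the propagators of scheme~\eqref{approx2} replaced by those of scheme~\eqref{approx3}. For PA-II both $\mathcal{F}$ and $\mathcal{G}$ are the operators \eqref{fine_approx3}--\eqref{coarse_approx3}, so with the matrices defined just before Lemma~\ref{contra_lemma} we have $\mathcal{F}(T_{n+1},T_n,U)=P_{J_2}U$ and $\mathcal{G}(T_{n+1},T_n,U)=P_2U$. Plugging these into the Parareal recursion~\eqref{parareal} gives
\begin{equation*}
U_{n+1}^{k+1}=P_2\,U_{n}^{k+1}+\bigl(P_{J_2}-P_2\bigr)U_{n}^{k},
\end{equation*}
and taking norms, $\|U_{n+1}^{k+1}\|\le\|P_2\|\,\|U_{n}^{k+1}\|+\|P_{J_2}-P_2\|\,\|U_{n}^{k}\|$.

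The next step is to show $\|P_2\|\le1$ and $\|P_{J_2}-P_2\|\le1$. Since $D_h$ is symmetric negative definite with the distinct eigenvalues $\lambda_p<0$ listed above, $P_2$ and $P_{J_2}$ are symmetric and share an orthonormal eigenbasis, so their norms (and the norm of $P_{J_2}-P_2$) coincide with the corresponding spectral radii. The eigenvalues of $P_2$ are exactly $g_2(-\lambda_p)$, which lie in $(0,1)$ by Lemma~\ref{contra_lemma} (case $i=2$), hence $\|P_2\|\le1$; likewise, Lemma~\ref{contra_lemma2}(i) with $i=2$ bounds every eigenvalue of $P_{J_2}-P_2$ in modulus by $1$, hence $\|P_{J_2}-P_2\|\le1$. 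Substituting these bounds yields $\|U_{n+1}^{k+1}\|\le\|U_{n}^{k+1}\|+\|U_{n}^{k}\|$, which is the same recurrence as \eqref{stab1}.

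Finally, I would apply this inequality successively for the indices $0,1,\dots,n$ and sum; the $\|U_{\cdot}^{k+1}\|$ contributions telescope, leaving $\|U_{n+1}^{k+1}\|-\|U_{0}^{k+1}\|\le\sum_{j=0}^{n}\|U_{j}^{k}\|\le(n+1)\max_{0\le j\le n}\|U_{j}^{k}\|$, and inserting $U_{0}^{k+1}=u^{0}$ gives the stated estimate. I do not anticipate any real difficulty: the argument is identical to that for PA-I, and the only thing to check is that scheme~\eqref{approx3}, whose zeroth-order term has already been absorbed into the operator forms \eqref{fine_approx3}--\eqref{coarse_approx3}, indeed falls under the hypotheses of Lemmas~\ref{contra_lemma} and~\ref{contra_lemma2}; the pertinent case is $i=2$, which both lemmas include.
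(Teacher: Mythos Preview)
Your proposal is correct and follows exactly the approach the paper intends: the paper's own proof is a one-line ``Emulating the proof of Theorem~\ref{thm1} we have the stated result,'' and you have written out precisely that emulation, replacing $P_1,P_{J_1}$ by $P_2,P_{J_2}$ and invoking the $i=2$ cases of Lemmas~\ref{contra_lemma} and~\ref{contra_lemma2}.
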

\begin{proof}
Emulating the proof of Theorem \ref{thm1} we have the stated result.
\end{proof}
\begin{theorem}[Convergence of PA-II]\label{thm4}
The algorithm PA-II is convergent, i.e., for the error ${E}_{n+1}^{k+1}=U(T_{n+1})-{U}_{n+1}^{k+1}$ the algorithm PA-II satisfies the following error estimate  $\max\limits_{1\leq j\leq N}\parallel {E}_{j}^{k+1}\parallel \leq \alpha^{k+1}\min \left\{ \left(\frac{1-\beta^{N-1}}{1-\beta}\right)^{k+1},  \binom{N-1}{k+1} \right\} \max\limits_{1\leq j\leq N}\parallel {E}_{j}^{0}\parallel$, where $\alpha=\parallel P_{J_2}-P_2 \parallel,  \beta= \parallel P_2 \parallel$.
\end{theorem}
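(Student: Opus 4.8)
The plan is to run the proof of Theorem \ref{thm2} with only the two propagators changed. First I would insert the fine propagator \eqref{fine_approx3} and the coarse propagator \eqref{coarse_approx3} into the exact error identity \eqref{err1}, which is a purely formal consequence of the Parareal scheme \eqref{parareal} and the convention that the reference is the fine solution. Since both propagators are rational functions of the single matrix $D_h$ they commute, and the substitution collapses to the linear recurrence
\begin{equation*}
{E}_{n+1}^{k+1} = \left(P_{J_2}-P_2\right){E}_{n}^{k} + P_2{E}_{n}^{k+1}.
\end{equation*}
Taking norms and setting $\alpha=\parallel P_{J_2}-P_2\parallel$ and $\beta=\parallel P_2\parallel$ yields $\parallel {E}_{n+1}^{k+1}\parallel \le \alpha\parallel {E}_{n}^{k}\parallel + \beta\parallel {E}_{n}^{k+1}\parallel$, i.e.\ exactly the scalar recurrence \eqref{err2} with the new constants.

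From here I would transcribe the matrix step of \eqref{err_matrix}: collect the inequalities for $n=1,\dots,N$ into a system whose left factor is the lower-bidiagonal matrix $M(\beta)$ of Lemma \ref{mat_inv}, invert it via that lemma, and identify the resulting iteration matrix with $\alpha\,\mathbb{T}(\beta)$, where $\mathbb{T}(\beta)$ is the strictly lower-triangular Toeplitz matrix of Lemma \ref{mat_power}. Nilpotency of $\mathbb{T}(\beta)$ then gives convergence in at most $N$ steps, and Lemmas \ref{mat_power} and \ref{mat_norm} convert the $(k+1)$-st power of $\alpha\,\mathbb{T}(\beta)$ into the asserted bound $\alpha^{k+1}\min\{(\tfrac{1-\beta^{N-1}}{1-\beta})^{k+1},\binom{N-1}{k+1}\}$.

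The only genuinely new point — and the one I would treat carefully — is to check that the scalar estimates of Lemmas \ref{contra_lemma} and \ref{contra_lemma2} pass to the spectral norms $\alpha$ and $\beta$, in particular that $0<\beta<1$, which is the hypothesis of Lemma \ref{mat_norm}. Because $D_h$ in \eqref{D_h} is symmetric negative definite with eigenvalues $\lambda_p<0$, the matrices $P_2$ and $P_{J_2}$ are symmetric, commute, and are simultaneously diagonalized by the orthonormal eigenbasis of $D_h$; hence $\beta=\parallel P_2\parallel_2=\max_p g_2(-\lambda_p)$ lies in $(0,1)$ by Lemma \ref{contra_lemma} with $i=2$, and $\alpha=\parallel P_{J_2}-P_2\parallel_2=\max_p|\phi_2(-\lambda_p)|<1$ by Lemma \ref{contra_lemma2}(i). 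With these two bounds in hand the rest of the argument is identical to that of Theorem \ref{thm2}, and I anticipate no further obstacle, since PA-II differs from PA-I only in replacing the convex--concave splitting \eqref{approx2} by \eqref{approx3}, to which the proof machinery is insensitive.
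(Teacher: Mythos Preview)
Your proposal is correct and is exactly the approach the paper takes: its entire proof is the one line ``The proof follows from the Theorem \ref{thm2},'' and what you have written is precisely the transcription of that argument with $P_{J_1},P_1$ replaced by $P_{J_2},P_2$. Your additional care in verifying $0<\beta<1$ via the simultaneous diagonalization of $P_2,P_{J_2}$ in the eigenbasis of $D_h$ and Lemma~\ref{contra_lemma} is a welcome elaboration that the paper leaves implicit.
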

\begin{proof}
The proof follows from the Theorem \ref{thm2}.
\end{proof}

\begin{theorem}[Stability of PA-III]\label{thm5}
The algorithm PA-III is stable, i.e., for each $n$ and $k$ the Parareal iteration satisfies $\parallel U_{n+1}^{k+1}\parallel \leq  \parallel u^{0}\parallel+ (n+1)\left(\max\limits_{0\leq j\leq n} \parallel {U}_{ j}^{k}\parallel \right)$.
\end{theorem}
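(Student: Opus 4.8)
The plan is to reuse, essentially verbatim, the argument given for Theorem \ref{thm1}, the only new ingredient being the identification of the relevant propagators for PA-III. In PA-III the fine solver $\mathcal{F}$ is the scheme \eqref{approx3} run with step $\Delta t=\Delta T/J$ and the coarse solver $\mathcal{G}$ is the scheme \eqref{approx2} run with step $\Delta T$. In the operator form \eqref{fine_approx3}--\eqref{coarse_approx2}, and with the notation introduced before Lemma \ref{contra_lemma}, this means $\mathcal{F}(T_{n+1},T_n,U)=P_{J_2}U$ and $\mathcal{G}(T_{n+1},T_n,U)=P_1U$. Substituting these into the Parareal recursion \eqref{parareal} gives
\begin{equation*}
U_{n+1}^{k+1}=P_1\,U_n^{k+1}+\bigl(P_{J_2}-P_1\bigr)U_n^{k},
\end{equation*}
and hence, taking norms,
\begin{equation*}
\parallel U_{n+1}^{k+1}\parallel\;\leq\;\parallel P_1\parallel\,\parallel U_n^{k+1}\parallel+\parallel P_{J_2}-P_1\parallel\,\parallel U_n^{k}\parallel .
\end{equation*}

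The next step is to show that both coefficients are at most $1$. Since $P_1$ and $P_{J_2}$ are functions of the single symmetric matrix $D_h$, they are symmetric and share the eigenbasis of $D_h$; therefore $\parallel P_1\parallel$ and $\parallel P_{J_2}-P_1\parallel$ equal the corresponding spectral radii. Writing $y=-\lambda_p>0$ for the negatives of the eigenvalues of $D_h$, the eigenvalues of $P_1$ are the numbers $g_1(y)$ and those of $P_{J_2}-P_1$ are the numbers $\bigl(g_2(y/J)\bigr)^J-g_1(y)=\phi_3(y)$. Lemma \ref{contra_lemma} yields $g_1(y)\in(0,1)$, so $\parallel P_1\parallel<1$; Lemma \ref{contra_lemma2}(ii) --- which is precisely the statement tailored to this mixed pair of schemes --- yields $|\phi_3(y)|<1$, so $\parallel P_{J_2}-P_1\parallel<1$. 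Consequently
\begin{equation*}
\parallel U_{n+1}^{k+1}\parallel\;\leq\;\parallel U_n^{k+1}\parallel+\parallel U_n^{k}\parallel .
\end{equation*}

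Finally I would close exactly as in Theorem \ref{thm1}: iterating the last inequality in $n$ and summing telescopes the $\parallel U_{\,\cdot}^{k+1}\parallel$ terms, giving $\parallel U_{n+1}^{k+1}\parallel-\parallel U_0^{k+1}\parallel\leq\sum_{j=0}^{n}\parallel U_j^{k}\parallel\leq(n+1)\max_{0\leq j\leq n}\parallel U_j^{k}\parallel$, and then $U_0^{k+1}=u^0$ gives the claimed bound. I do not expect a genuine difficulty here; the only point that needs attention is the spectral reduction in the middle step, i.e.\ observing that although the fine and coarse operators come from different time discretizations, both are functions of the same $D_h$, so that $P_{J_2}-P_1$ is diagonalized in the common eigenbasis and its eigenvalues are exactly the quantities $\phi_3(-\lambda_p)$ that Lemma \ref{contra_lemma2}(ii) controls. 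Everything else is the same bookkeeping already carried out for PA-I and PA-II.
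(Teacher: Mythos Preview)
Your proposal is correct and follows exactly the approach the paper intends: the paper's own proof reads simply ``Emulating the proof of Theorem \ref{thm1} we have the result,'' and you have carried out precisely that emulation, correctly identifying $P_{J_2}$ and $P_1$ as the relevant propagators and invoking Lemma \ref{contra_lemma} and Lemma \ref{contra_lemma2}(ii) at the appropriate step. The only addition you make beyond the paper is the explicit remark that $P_{J_2}$ and $P_1$ share the eigenbasis of $D_h$, which is the justification (left tacit in the paper) for passing from operator norms to the scalar bounds of the lemmas.
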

\begin{proof}
Emulating the proof of Theorem \ref{thm1} we have the result.
\end{proof}
\begin{theorem}[Convergence of PA-III]\label{thm6}
The algorithm PA-III is convergent, i.e., for the error ${E}_{n+1}^{k+1}=U(T_{n+1})-{U}_{n+1}^{k+1}$ the algorithm PA-III satisfies the following error estimation  $\max\limits_{1\leq j\leq N}\parallel {E}_{j}^{k+1}\parallel \leq \alpha^{k+1}\min \left\{ \left(\frac{1-\beta^{N-1}}{1-\beta}\right)^{k+1},  \binom{N-1}{k+1} \right\} \max\limits_{1\leq j\leq N}\parallel {E}_{j}^{0}\parallel$, where $\alpha=\parallel P_{J_2}-P_1 \parallel,  \beta= \parallel P_1 \parallel$.
\end{theorem}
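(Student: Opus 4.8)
The plan is to follow the proof of Theorem \ref{thm2} almost verbatim, with the single change that the fine propagator of PA-III is the scheme \eqref{approx3} rather than \eqref{approx2}, while its coarse propagator is still the scheme \eqref{approx2}. Concretely, I would start from the error identity \eqref{err1}, which holds for the Parareal scheme \eqref{parareal} irrespective of the choice of $\mathcal{F}$ and $\mathcal{G}$, and substitute the fine propagator \eqref{fine_approx3} (whose $J$-fold composition over $[T_n,T_{n+1}]$ with substep $\Delta t=\Delta T/J$ is $P_{J_2}$) together with the coarse propagator \eqref{coarse_approx2} (which is $P_1$). This yields the error recurrence
\begin{equation*}
{E}_{n+1}^{k+1}=\left(P_{J_2}-P_1\right){E}_{n}^{k}+P_1{E}_{n}^{k+1},
\end{equation*}
and taking norms gives $\parallel {E}_{n+1}^{k+1}\parallel\leq\alpha\parallel {E}_{n}^{k}\parallel+\beta\parallel {E}_{n}^{k+1}\parallel$ with $\alpha=\parallel P_{J_2}-P_1\parallel$ and $\beta=\parallel P_1\parallel$.

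Next I would record the spectral facts that make the estimate work. Since $D_h$ in \eqref{D_h} is symmetric it admits an orthonormal eigenbasis, and $P_1$, $P_{J_2}$ are both matrix functions of $D_h$, hence simultaneously diagonalized in that basis. Writing $y=-\lambda_p>0$ for the $p$-th eigenvalue of $-D_h$, the corresponding eigenvalue of $P_1$ is $g_1(y)$ and that of $P_{J_2}-P_1$ is $\phi_3(y)=\left(g_2(y/J)\right)^{J}-g_1(y)$. Lemma \ref{contra_lemma} gives $g_1(y)\in(0,1)$, so $\beta=\max_p g_1(-\lambda_p)\in(0,1)$; Lemma \ref{contra_lemma2}, part (ii), gives $\vert\phi_3(y)\vert<1$, so $\alpha=\max_p\vert\phi_3(-\lambda_p)\vert<1$. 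The condition $0<\beta<1$ is exactly what Lemma \ref{mat_norm} requires.

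Finally I would cast the scalar recurrence in the matrix form of \eqref{err_matrix}: collect $\parallel {E}_{j}\parallel$, $j=1,\dots,N$, into a vector, move the $\beta$-subdiagonal to the left as a lower bidiagonal matrix, invert it via Lemma \ref{mat_inv}, so that the error vector at iteration $k+1$ is bounded entrywise by $\alpha\,\mathbb{T}(\beta)$ applied to the error vector at iteration $k$, where $\mathbb{T}(\beta)$ is the strictly lower triangular Toeplitz matrix of Lemma \ref{mat_power}. Iterating $k+1$ times, and noting that $\mathbb{T}(\beta)$ is nilpotent so convergence occurs in at most $N$ steps, the error vector at step $k+1$ is bounded by $\alpha^{k+1}\mathbb{T}(\beta)^{k+1}$ times the initial one; applying $\parallel\cdot\parallel_{\infty}$ and the bound of Lemma \ref{mat_norm} on $\parallel\mathbb{T}(\beta)^{k+1}\parallel_{\infty}$ produces the claimed estimate. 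The only point needing care, and the one I would double-check, is the simultaneous diagonalizability of $P_{J_2}$ and $P_1$, since that is what guarantees the eigenvalues of $P_{J_2}-P_1$ are precisely the differences $\phi_3(-\lambda_p)$ and thus lets Lemma \ref{contra_lemma2} deliver $\alpha<1$; everything else is the bookkeeping already carried out in Theorem \ref{thm2}.
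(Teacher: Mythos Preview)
Your proposal is correct and mirrors the paper's approach exactly: the paper's own proof of this theorem is simply ``The proof follows from the Theorem \ref{thm2},'' and what you outline is precisely that argument with the substitution $\mathcal{F}\leadsto P_{J_2}$, $\mathcal{G}\leadsto P_1$, invoking Lemma \ref{contra_lemma2}(ii) in place of (i). If anything, you have supplied more detail than the paper (the simultaneous diagonalizability remark), but the route is identical.
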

\begin{proof}
The proof follows from the Theorem \ref{thm2}.
\end{proof}

Next we prove a few relevant results before discussing the stability and convergence of nonlinear Parareal method.
\begin{lemma}[Growth of Coarse Operator in NPA-I]\label{growth_G}
The coarse operator in \eqref{coarse_approx2} satisfies the growth condition 
$\parallel \mathcal{G}(T_{n+1}, T_n, U)\parallel  \leq \parallel U \parallel, \forall U\in\mathbb{R}^{N_x-2}$. 
\end{lemma}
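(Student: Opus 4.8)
The plan is to reduce the operator norm bound to a scalar spectral estimate, exactly as in the earlier lemmas. Since $D_h$ is symmetric negative definite with eigenvalues $\lambda_p < 0$, write $y_p = -\lambda_p > 0$. The coarse propagator in \eqref{coarse_approx2} is $\mathcal{G}(T_{n+1},T_n,U) = \left(I - \Delta T D_h + \epsilon^2 \Delta T D_h^2\right)^{-1}\left(I - \Delta T D_h\right) U$, which is a rational function of the symmetric matrix $D_h$, hence symmetric, and its eigenvalues are obtained by substituting each $\lambda_p$. Substituting $\lambda_p = -y_p$ gives eigenvalue $\frac{1 + \Delta T y_p}{1 + \Delta T y_p + \epsilon^2 \Delta T y_p^2} = g_1(y_p)$ in the notation of Lemma \ref{contra_lemma}.

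First I would observe that because the matrix $\mathcal{G}$ (acting as $\mathcal{G}(T_{n+1},T_n,\cdot) = P_1$) is symmetric, its induced $2$-norm equals its spectral radius, $\parallel P_1 \parallel = \max_p |g_1(y_p)|$. Next I would invoke Lemma \ref{contra_lemma} with $i=1$, which gives $g_1(y) \in (0,1)$ for all $y \in (0,\infty)$; in particular $0 < g_1(y_p) < 1$ for each $p$, so $\max_p |g_1(y_p)| \le 1$. Therefore $\parallel P_1 \parallel \le 1$, and consequently $\parallel \mathcal{G}(T_{n+1}, T_n, U)\parallel \le \parallel P_1 \parallel \, \parallel U \parallel \le \parallel U \parallel$ for every $U \in \mathbb{R}^{N_x - 2}$, which is the claimed growth condition.

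The argument is essentially routine once one notes that the whole construction diagonalizes simultaneously with $D_h$; the only mild point to be careful about is the choice of norm. If $\parallel \cdot \parallel$ denotes the Euclidean norm, symmetry of $P_1$ makes the bound immediate via the spectral theorem. The main (minor) obstacle is thus simply making explicit that $P_1$ is symmetric and that the relevant eigenvalues are precisely the values $g_1(\lambda)$ evaluated at the (negated) eigenvalues of $D_h$, together with citing Lemma \ref{contra_lemma} to control those values. No new estimates are needed beyond what has already been established.
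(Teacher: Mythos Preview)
Your argument is correct and follows essentially the same route as the paper: bound $\parallel \mathcal{G}(T_{n+1},T_n,U)\parallel \le \parallel P_1\parallel\,\parallel U\parallel$ and then invoke Lemma~\ref{contra_lemma} to conclude $\parallel P_1\parallel < 1$. You simply make explicit the spectral step (symmetry of $P_1$ and identification of its eigenvalues with $g_1(y_p)$) that the paper leaves implicit.
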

\begin{proof}
We have $\parallel \mathcal{G}(T_{n+1}, T_n, U)\parallel \leq  \parallel P_1\parallel \parallel U \parallel$. Now $\parallel P_1\parallel<1$ follows from Lemma \ref{contra_lemma}, hence the result.
\end{proof}

\begin{lemma}[Lipschitz Property of $\mathcal{G}$]\label{Lip_G}
The coarse operator in \eqref{coarse_approx2} satisfies the Lipschitz condition 
$
\parallel \mathcal{G}(T_{n+1}, T_n, {U_1})-\mathcal{G}(T_{n+1}, T_n, {U_2})\parallel  \leq \parallel P_1\parallel \parallel {U_1}-{U_2}\parallel, \forall U_1, U_2\in\mathbb{R}^{N_x-2}.
$ 
\end{lemma}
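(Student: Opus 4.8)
The plan is to exploit the fact that the coarse propagator $\mathcal{G}$ in \eqref{coarse_approx2} is a \emph{linear} operator, namely multiplication by the fixed matrix $P_1 = \left(I-\Delta T D_h +\epsilon^2\Delta T D_h^2\right)^{-1}\left(I - \Delta T D_h\right)$. Since $\mathcal{G}(T_{n+1}, T_n, U) = P_1 U$ does not depend on $T_n, T_{n+1}$ beyond the fixed step size $\Delta T$, linearity gives immediately
\begin{equation*}
\mathcal{G}(T_{n+1}, T_n, {U_1})-\mathcal{G}(T_{n+1}, T_n, {U_2}) = P_1({U_1}-{U_2}),
\end{equation*}
so that $\parallel \mathcal{G}(T_{n+1}, T_n, {U_1})-\mathcal{G}(T_{n+1}, T_n, {U_2})\parallel \leq \parallel P_1\parallel \parallel {U_1}-{U_2}\parallel$, which is exactly the claimed Lipschitz bound with constant $\parallel P_1\parallel$. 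This is essentially the same observation already used in Lemma \ref{growth_G}, applied to the difference $U_1 - U_2$ instead of to a single vector.

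The only genuine content is therefore the operator-norm estimate $\parallel P_1 \parallel \le 1$ (in fact $<1$), which I would justify exactly as in the proof of Lemma \ref{growth_G}: diagonalize $D_h$. Since $D_h$ is symmetric with distinct negative eigenvalues $\lambda_p$, write $y_p = -\lambda_p > 0$; then $P_1$ has the same eigenvectors and eigenvalues $\frac{1 + \Delta T\, y_p}{1 + \Delta T\, y_p + \epsilon^2 \Delta T\, y_p^2} = g_1(y_p)$. By Lemma \ref{contra_lemma}, each $g_1(y_p) \in (0,1)$, and because $P_1$ is symmetric its spectral norm equals its spectral radius $\max_p g_1(y_p) < 1$. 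Hence $\parallel P_1 \parallel < 1 \le 1$.

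I would then simply combine these two observations in a two- or three-line proof. There is no real obstacle here; the one point requiring a word of care is that the operator norm $\parallel\cdot\parallel$ used throughout should be the Euclidean ($\ell^2$) norm so that symmetry of $P_1$ lets us pass from spectral radius to operator norm — this is the same convention implicitly adopted in Lemma \ref{growth_G} and Theorem \ref{thm1}. If instead an arbitrary norm were intended, one would invoke equivalence of norms or note that $P_1$ is a polynomial in $D_h$ whose spectrum lies in $(0,1)$, but the cleanest route, matching the rest of the paper, is the $\ell^2$/spectral argument.

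\begin{proof}
The coarse operator in \eqref{coarse_approx2} is linear, $\mathcal{G}(T_{n+1}, T_n, U) = P_1 U$, where $P_1=\left(I-\Delta T D_h +\epsilon^2\Delta T D_h^2\right)^{-1}\left(I - \Delta T D_h\right)$ is independent of $U$. Therefore, for any $U_1, U_2\in\mathbb{R}^{N_x-2}$,
\begin{equation*}
\mathcal{G}(T_{n+1}, T_n, {U_1})-\mathcal{G}(T_{n+1}, T_n, {U_2}) = P_1({U_1}-{U_2}),
\end{equation*}
and taking norms gives $\parallel \mathcal{G}(T_{n+1}, T_n, {U_1})-\mathcal{G}(T_{n+1}, T_n, {U_2})\parallel \leq \parallel P_1\parallel \parallel {U_1}-{U_2}\parallel$. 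Moreover $\parallel P_1\parallel<1$: since $D_h$ is symmetric with distinct negative eigenvalues $\lambda_p$, the matrix $P_1$ is symmetric with eigenvalues $g_1(-\lambda_p)$, each lying in $(0,1)$ by Lemma \ref{contra_lemma}, so its spectral norm is $\max_p g_1(-\lambda_p)<1$. This proves the claimed Lipschitz estimate.
\end{proof}
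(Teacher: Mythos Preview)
Your proof is correct and matches the paper's approach: the paper itself simply writes ``The result is straight forward,'' and you have spelled out exactly that straightforward argument (linearity of $\mathcal{G}(T_{n+1},T_n,U)=P_1U$ gives the Lipschitz bound immediately). Your additional justification that $\|P_1\|<1$ via Lemma~\ref{contra_lemma} is not needed for the lemma as stated, but it is correct and consistent with how the bound is used elsewhere (e.g.\ Lemma~\ref{growth_G}).
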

\begin{proof} The result is straight forward.
\end{proof}

\begin{lemma}[Local Truncation Error (LTE) Differences in NPA-I]\label{LTE}
Let $\mathcal{F}(T_{n+1}, T_n, {U})$ be the fine operator generated by the nonlinear scheme in \eqref{approx1} and $\mathcal{G}(T_{n+1}, T_n, {U})$ be the coarse operator in \eqref{coarse_approx2}. Then the following LTE differences hold  
$$
\mathcal{F}(T_{n+1}, T_n, {U})-\mathcal{G}(T_{n+1}, T_n, {U}) =c_2(U)\Delta T ^2 +c_3(U)\Delta T ^3 + \cdots ,$$ 
where $c_j(U)$ are continuously differentiable function for $j=2, 3, ...$
\end{lemma}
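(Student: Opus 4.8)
The plan is to compare the one-step maps of the two schemes on a single coarse interval $[T_n,T_{n+1}]$ of length $\Delta T$ against the exact flow, and then subtract. First I would recall that both the nonlinear scheme \eqref{approx1} and the coarse linear scheme \eqref{coarse_approx2} are one-step methods of the form $U \mapsto \Phi_\bullet(\Delta T, U)$ that are consistent with the same underlying semidiscrete ODE system $\dot{W} = D_h\,\diag(W)^2 W - \epsilon^2 D_h^2 W - D_h W =: \mathcal{N}(W)$ (for the linear coarse operator, with the stated approximation $\diag(W)^2 \approx I$, the underlying system is the linearized one, but the point is only that each scheme has a well-defined smooth one-step map). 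Writing $\varphi(\Delta T, U)$ for the exact solution operator of the relevant system, Taylor expansion in $\Delta T$ gives $\mathcal{F}(T_{n+1},T_n,U) = \varphi(\Delta T, U) + O(\Delta T^2)$ and $\mathcal{G}(T_{n+1},T_n,U) = \varphi(\Delta T, U) + O(\Delta T^2)$, each scheme being first-order accurate; since both one-step maps are analytic (rational) functions of $\Delta T$ with value $U$ at $\Delta T = 0$ and first $\Delta T$-derivative equal to $\mathcal{N}(U)$ (by consistency), their difference has a Taylor expansion in $\Delta T$ starting at order $2$.

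Concretely I would expand each propagator explicitly. For $\mathcal{G}$, from \eqref{coarse_approx2},
\begin{equation*}
\mathcal{G}(T_{n+1},T_n,U) = \bigl(I - \Delta T D_h + \epsilon^2 \Delta T D_h^2\bigr)^{-1}\bigl(I - \Delta T D_h\bigr) U = U + \Delta T\bigl(-\epsilon^2 D_h^2 - D_h\bigr)U + \Delta T^2 (\cdots) U + \cdots,
\end{equation*}
a convergent matrix power series in $\Delta T$ (valid for $\Delta T$ small, or for all $\Delta T>0$ after noting $D_h$ is diagonalizable with negative eigenvalues). For $\mathcal{F}$ I would solve \eqref{approx1} for $U^{n+1}$ in terms of $U^n=U$: writing the scheme as $G(\Delta T, U^{n+1}) = 0$ with $G(0,U^{n+1}) = U^{n+1}-U$ and $\partial_{U^{n+1}}G(0,\cdot) = I$ invertible, the implicit function theorem yields a smooth (indeed analytic in $\Delta T$) solution $U^{n+1} = \mathcal{F}(T_{n+1},T_n,U) = U + \Delta T\, D_h(U^3 - U - \epsilon^2 D_h U) + \Delta T^2(\cdots) + \cdots$, where here $U^3$ denotes $\diag(U)^2 U$ and the higher coefficients are polynomial expressions in the entries of $U$ and $D_h$, hence continuously differentiable in $U$. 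Subtracting, the $\Delta T^0$ terms cancel; the $\Delta T^1$ terms also cancel because both equal the (consistent) increment of the common underlying vector field evaluated at $U$ — this is exactly the first-order consistency of each scheme. Therefore the difference begins at $\Delta T^2$, and one reads off $c_2(U)$ as the difference of the two second-order Taylor coefficients, $c_3(U)$ as the difference of the third-order coefficients, and so on; each $c_j(U)$ is a polynomial (in particular $C^1$) in $U$.

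The main obstacle is being careful about the two schemes having (slightly) different underlying flows: \eqref{approx1} is consistent with the genuine nonlinear semidiscrete CH system, while \eqref{coarse_approx2} — after the modeling simplification $\diag(W^n)^2 \approx I$ — is consistent with its linearization. If one insists on the true nonlinear vector field for both, the $\Delta T^1$ coefficients differ by $D_h(\diag(U)^2 - I)U$, which would push a nonzero term into order $\Delta T^1$ and break the claim; so the statement must be read in the regime where the approximation $\diag(W)^2\approx I$ is in force (consistent with the sentence preceding \eqref{approx2_op_form}), i.e. the coarse operator is being compared with the fine operator of the same linearized problem, or equivalently one restricts attention to the region away from the interface where $U^2\approx 1$. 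Under that convention the $\Delta T$-coefficients agree and the expansion starts at $\Delta T^2$ as stated. The remaining work — convergence of the two power series, and the implicit-function-theorem smoothness of $\mathcal{F}$ — is routine given that $D_h$ is a fixed finite matrix with negative spectrum, so I would state it and move on.
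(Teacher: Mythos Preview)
Your argument is correct and follows essentially the same route as the paper: insert the exact solution operator $\mathcal{S}(T_{n+1},T_n,U)$, write $\mathcal{F}-\mathcal{G}=(\mathcal{F}-\mathcal{S})+(\mathcal{S}-\mathcal{G})$, and use that each scheme is first-order accurate so each bracket has a Taylor expansion beginning at $\Delta T^2$. Your explicit expansions via the Neumann series for $\mathcal{G}$ and the implicit function theorem for $\mathcal{F}$, together with your caveat that the cancellation at order $\Delta T$ relies on the modeling assumption $\diag(U)^2\approx I$, are useful elaborations the paper leaves implicit, but they do not constitute a different approach.
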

\begin{proof}
Let $\mathcal{S}(T_{n+1}, T_n, {U})$ be the exact solution of \eqref{CH}, then 
\begin{equation*}
\begin{aligned}
\mathcal{F}(T_{n+1}, T_n, {U})-\mathcal{G}(T_{n+1}, T_n, {U}) & =\mathcal{F}(T_{n+1}, T_n, {U})-\mathcal{S}(T_{n+1}, T_n, {U})\\ & + \mathcal{S}(T_{n+1}, T_n, {U})-\mathcal{G}(T_{n+1}, T_n, {U})\\
& = \tilde{c_2}(U)\Delta T^2 +\tilde{c_3}(U)\Delta T ^3 + \cdots \\ & +\hat{c_2}(U)\Delta T^2 +\hat{c_3}(U)\Delta T ^3 + \cdots\\
& = c_2(U)\Delta T^2 +c_3(U)\Delta T ^3 + \cdots .
\end{aligned}
\end{equation*}
Hence the Lemma.
\end{proof}

\begin{theorem}[Stability of NPA-I]\label{thm_stab_npa1}
The algorithm NPA-I is stable, i.e., for each $n$ and $k$, $ 
\parallel {U}_{n+1}^{k+1}\parallel \leq \parallel u^0\parallel + C (n+1)\Delta T^2\left(\max\limits_{0\leq j\leq n} \parallel {U}_{ j}^{k}\parallel \right)$, for a constant $C$.
\end{theorem}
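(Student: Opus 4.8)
The plan is to imitate the structure of the proof of Theorem \ref{thm1}, replacing the linear-operator estimates ($\|P_1\|<1$, $\|P_{J_1}-P_1\|<1$) with the nonlinear-operator estimates established in Lemmas \ref{growth_G}, \ref{Lip_G}, and \ref{LTE}. First I would write out the Parareal recursion \eqref{parareal} with $\mathcal{F}$ the nonlinear scheme \eqref{approx1} and $\mathcal{G}$ the linear coarse operator \eqref{coarse_approx2}, grouping the terms as
\begin{equation*}
U_{n+1}^{k+1}=\mathcal{G}(T_{n+1},T_n,U_n^{k+1})+\Bigl(\mathcal{F}(T_{n+1},T_n,U_n^{k})-\mathcal{G}(T_{n+1},T_n,U_n^{k})\Bigr).
\end{equation*}
Taking norms and applying the triangle inequality, the first term is bounded by $\|\mathcal{G}(T_{n+1},T_n,U_n^{k+1})\|\le\|U_n^{k+1}\|$ using Lemma \ref{growth_G}, while the bracketed difference is controlled by Lemma \ref{LTE}: since $\mathcal{F}-\mathcal{G}=c_2(U)\Delta T^2+c_3(U)\Delta T^3+\cdots$ with $c_j$ continuously differentiable (hence locally bounded), for $\Delta T$ small enough there is a constant $C$ with $\|\mathcal{F}(T_{n+1},T_n,U_n^k)-\mathcal{G}(T_{n+1},T_n,U_n^k)\|\le C\,\Delta T^2\,\|U_n^k\|$ (or $\le C\Delta T^2(1+\|U_n^k\|)$, absorbing the constant term). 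This yields the one-step recursion
\begin{equation*}
\|U_{n+1}^{k+1}\|\le\|U_n^{k+1}\|+C\,\Delta T^2\,\|U_n^k\|.
\end{equation*}

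Next I would unroll this recursion in $n$, exactly as in \eqref{stab1}: summing from the base index down to $n$ and using $U_0^{k+1}=u^0$ gives
\begin{equation*}
\|U_{n+1}^{k+1}\|\le\|u^0\|+C\,\Delta T^2\sum_{j=0}^{n}\|U_j^k\|\le\|u^0\|+C\,(n+1)\,\Delta T^2\Bigl(\max_{0\le j\le n}\|U_j^k\|\Bigr),
\end{equation*}
which is the claimed bound.

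The main obstacle is justifying the constant $C$ in the LTE-difference estimate uniformly: Lemma \ref{LTE} only gives an asymptotic expansion with $U$-dependent coefficients $c_j(U)$, so to convert $\|\mathcal{F}-\mathcal{G}\|\le c_2(U)\Delta T^2+\cdots$ into a clean bound $C\Delta T^2\|U\|$ one needs the $c_j$ to be uniformly bounded on the relevant range of iterates (or to grow at most linearly in $\|U\|$), together with a smallness condition on $\Delta T$ so the higher-order tail is dominated by the leading term. I would handle this by noting that the iterates stay in a bounded region — which is itself what the theorem asserts, so in a fully rigorous treatment this would be set up as a bootstrap/induction on $n$ and $k$ — and then invoking continuity of the $c_j$ on that region; alternatively one simply states the estimate as holding "for a constant $C$" depending on the data and on a bound for the iterates, matching the level of rigor used elsewhere in the paper. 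The remaining steps (triangle inequality, geometric unrolling, the final $\max$ bound) are routine.
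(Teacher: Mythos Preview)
Your proposal is correct and follows essentially the same approach as the paper: take norms in the Parareal recursion, bound the coarse term by Lemma~\ref{growth_G} and the $\mathcal{F}-\mathcal{G}$ difference by $C\Delta T^2\|U_n^k\|$ via Lemma~\ref{LTE}, then telescope in $n$ using $U_0^{k+1}=u^0$. Your discussion of the uniform constant $C$ is in fact more careful than the paper, which simply asserts the bound $\|\mathcal{F}-\mathcal{G}\|\le C\Delta T^2\|U_n^k\|$ without further justification.
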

\begin{proof}
Taking the norm in the correction scheme \eqref{parareal} we have
\begin{equation}\label{rec_relation_1}
\begin{aligned}
\parallel {U}_{n+1}^{k+1}\parallel & \leq \parallel \mathcal{G}(T_{n+1}, T_n, {U}_n^{k+1})\parallel +\parallel \mathcal{F}(T_{n+1}, T_n, {U}_n^k)-\mathcal{G}(T_{n+1}, T_n, {U}_n^k)\parallel \\
 & \leq \parallel {U}_{n}^{k+1}\parallel + C \Delta T^2 \parallel {U}_{n}^{k}\parallel,
\end{aligned}
\end{equation}
where in the 2nd inequality we use Lemma \eqref{growth_G} and \eqref{LTE}. Takin the sum over $n$ on the recurrence relation \eqref{rec_relation_1} we get 
\begin{equation*}
\parallel {U}_{n+1}^{k+1}\parallel -\parallel {U}_{0}^{k+1}\parallel   \leq  C \Delta T^2 \sum_{j=0}^n \parallel {U}_{j}^{k}\parallel \leq C (n+1)\Delta T^2\left(\max\limits_{0\leq j\leq n} \parallel {U}_{ j}^{k}\parallel \right).
\end{equation*}
Then using $U_0^{k+1}=u^0$ we get the stated result.
\end{proof}
\begin{theorem}[Convergence of NPA-I]\label{thm_npa1}
If the propagator $\mathcal{F}$ in \eqref{approx1} and $\mathcal{G}$ in \eqref{approx2} satisfy LTE differences given in Lemma \ref{LTE} and $\mathcal{G}$ satisfy Lipschitz condition given in Lemma \ref{Lip_G}, then the algorithm NPA-I satisfies the following error estimate
$$\max\limits_{1\leq j\leq N}\parallel {E}_{j}^{k+1}\parallel \leq \left( C_1 \Delta T^2\right)^{k+1}\min \left\{ \left(\frac{1-\beta^{N-1}}{1-\beta}\right)^{k+1},  \binom{N-1}{k+1} \right\} \max\limits_{1\leq j\leq N}\parallel {E}_{j}^{0}\parallel,$$
where $\beta=\parallel P_1\parallel$, and $C_1$ is a constant related to LTE.
\end{theorem}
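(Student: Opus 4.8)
The plan is to follow the proof of Theorem \ref{thm2} almost verbatim, the only new ingredient being that the linear constant $\alpha=\parallel\mathcal{F}-\mathcal{G}\parallel$ is replaced by the nonlinear local-error bound $C_1\Delta T^2$ coming from Lemma \ref{LTE} together with the Lipschitz continuity of the coefficient functions $c_j$. First I would reproduce the error identity \eqref{err1}, which is still valid here once $U(T_n)$ is read as the pure fine-solver trajectory $U(T_{n+1})=\mathcal{F}(T_{n+1},T_n,U(T_n))$; grouping the terms gives
\begin{equation*}
E_{n+1}^{k+1}=\Big(\Phi(U(T_n))-\Phi(U_n^{k})\Big)+\Big(\mathcal{G}(T_{n+1},T_n,U(T_n))-\mathcal{G}(T_{n+1},T_n,U_n^{k+1})\Big),
\end{equation*}
where $\Phi(U):=\mathcal{F}(T_{n+1},T_n,U)-\mathcal{G}(T_{n+1},T_n,U)$ is the LTE difference of Lemma \ref{LTE}.

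Next I would bound the two groups. For the second, Lemma \ref{Lip_G} gives $\parallel\mathcal{G}(T_{n+1},T_n,U(T_n))-\mathcal{G}(T_{n+1},T_n,U_n^{k+1})\parallel\le\beta\parallel E_n^{k+1}\parallel$ with $\beta=\parallel P_1\parallel\in(0,1)$, the latter because $P_1$ is symmetric with eigenvalues $g_1(-\lambda_p)\in(0,1)$ by Lemma \ref{contra_lemma}. For the first group, Lemma \ref{LTE} writes $\Phi(U)=\sum_{j\ge2}c_j(U)\Delta T^{j}$ with each $c_j$ continuously differentiable; on a bounded set containing the iterates $U_n^{k}$ and the reference values $U(T_n)$, each $c_j$ is Lipschitz, so for $\Delta T$ small enough $\parallel\Phi(U(T_n))-\Phi(U_n^{k})\parallel\le\big(\sum_{j\ge2}L_j\Delta T^{\,j-2}\big)\Delta T^2\parallel E_n^{k}\parallel=:C_1\Delta T^2\parallel E_n^{k}\parallel$. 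Combining these, and using $\parallel E_0^{k+1}\parallel=0$ (since $U_0^{k+1}=u^0=U(T_0)$), we obtain exactly the recurrence \eqref{err2} with $\alpha$ replaced by $C_1\Delta T^2$:
\begin{equation*}
\parallel E_{n+1}^{k+1}\parallel\le C_1\Delta T^2\parallel E_n^{k}\parallel+\beta\parallel E_n^{k+1}\parallel.
\end{equation*}

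From here the argument is identical to that of Theorem \ref{thm2}: assemble these inequalities into the matrix form \eqref{err_matrix}, invert the lower bidiagonal matrix via Lemma \ref{mat_inv} so that the vector of level-$(k+1)$ errors is dominated by $C_1\Delta T^2\,\mathbb{T}(\beta)$ applied to the vector of level-$k$ errors, iterate in $k$ to produce the factor $(C_1\Delta T^2)^{k+1}\mathbb{T}(\beta)^{k+1}$ (nilpotent, hence convergence in at most $N$ steps), and finally bound $\parallel\mathbb{T}(\beta)^{k+1}\parallel_\infty$ by Lemmas \ref{mat_power} and \ref{mat_norm}. This gives the stated estimate with $\beta=\parallel P_1\parallel$ and $C_1$ the LTE-related constant.

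The one genuinely delicate point is the uniform control of the $c_j$: one needs a single compact set holding \emph{all} iterates, independently of $k$, so that a common Lipschitz constant and a convergent $\Delta T$-tail can be extracted. This is exactly where Theorem \ref{thm_stab_npa1} is used, and the mild circularity — stability needs boundedness of the iterates while the error bound needs a fixed Lipschitz region — is best resolved by fixing the horizon $T$ and invoking an a priori bound on the scheme \eqref{approx1}. Everything past the scalar recurrence is routine bookkeeping copied from the linear case.
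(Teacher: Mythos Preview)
Your proposal is correct and follows essentially the same route as the paper: derive the error identity, bound the $\mathcal{F}-\mathcal{G}$ difference via Lemma~\ref{LTE} and the Lipschitz property of the $c_j$ to get $C_1\Delta T^2\parallel E_n^k\parallel$, bound the $\mathcal{G}$-difference via Lemma~\ref{Lip_G}, and then feed the resulting scalar recurrence into the Toeplitz machinery of Lemmas~\ref{mat_inv}--\ref{mat_norm}. Your closing remark on needing a common compact set for the iterates is in fact more careful than the paper, which simply asserts the Lipschitz estimate \eqref{lte_thm} without addressing that point.
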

\begin{proof}
From the Parareal scheme \eqref{parareal} we have 
\begin{equation}\label{err_npa1}
\begin{aligned}
U(T_{n+1})-{U}_{n+1}^{k+1}& = U(T_{n+1})-\mathcal{G}(T_{n+1},T_n,U_n^{k+1})-\mathcal{F}(T_{n+1},T_n,U_n^{k})+ \mathcal{G}(T_{n+1},T_n,U_n^{k})\\
&= \mathcal{F}(T_{n+1}, T_n, {U}_n) - \mathcal{G}(T_{n+1}, T_n, {U}_n)\\
& -\left( \mathcal{F}(T_{n+1}, T_n, {U}_n^k)- \mathcal{G}(T_{n+1}, T_n, {U}_n^{k}) \right) \\
& +\mathcal{G}(T_{n+1}, T_n, {U}_n)-\mathcal{G}(T_{n+1}, T_n, {U}_n^{k+1})\\
& =  (c_2(U_n)\Delta T^2 +c_3(U_n)\Delta T ^3 + \cdots)- (c_2(U_n^k)\Delta T^2 +c_3(U_n^k)\Delta T ^3 + \cdots) \\
& +\mathcal{G}(T_{n+1}, T_n, {U}_n)-\mathcal{G}(T_{n+1}, T_n, {U}_n^{k+1}),
\end{aligned}
\end{equation}
where in the third equality we use the Lemma \ref{LTE}. As $c_j, j\geq 2$ are continuously differentiable function we have
\begin{equation}\label{lte_thm}
\begin{aligned}[b]
& \parallel (c_2(U_n)\Delta T^2 +c_3(U_n)\Delta T ^3 + \cdots)- (c_2(U_n^k)\Delta T^2 +c_3(U_n^k)\Delta T ^3 + \cdots) \parallel \\
& \leq \Delta T^2 \parallel c_2(U_n)-c_2(U_n^k) \parallel + \Delta T ^3 \parallel c_3(U_n) - c_3(U_n^k)\parallel + \cdots \\
& \leq C_2\Delta T^2 \parallel U_n -U_n^k \parallel + C_3\Delta T ^3 \parallel U_n -  U_n^k\parallel + \cdots = C_1 \Delta T^2 \parallel U_n -U_n^k \parallel.
\end{aligned}
\end{equation}
Taking norm in \eqref{err_npa1} and using \eqref{lte_thm} and the Lipschitz condition given in Lemma \ref{Lip_G} we have the following recurrence relation for the error $E_{n+1}^{k+1}=U(T_{n+1})-{U}_{n+1}^{k+1}$ as 
\begin{equation}\label{rec2}
\parallel E_{n+1}^{k+1} \parallel \leq C_1 \Delta T^2 \parallel E_{n}^{k} \parallel + \parallel P_1\parallel  \parallel E_{n}^{k+1} \parallel.
\end{equation}
The recurrence relation in \eqref{rec2} can be written in the following matrix form 
\begin{equation}\label{err_iter}
e^{k+1}= C_1 \Delta T^2 \mathbb{T}(\beta)e^{k},
\end{equation}
where $e^{k}=
\begin{bmatrix}
\parallel {E}_{1}^{k} \parallel, \parallel {E}_{2}^{k} \parallel, \cdots \parallel {E}_{N}^{k} \parallel
\end{bmatrix}^t$, and $\beta=\parallel P_1\parallel$. Clearly the iteration matrix in \eqref{err_iter} is Nilpotent, hence we have finite step convergence. Now to get the stated result  we use the Lemma \ref{mat_power} and infinity norm in \eqref{err_iter}.
\end{proof}
Next we discuss the stability and convergence behaviour of the Parareal algorithm NPA-II. In this case both fine and coarse propagators are nonlinear. To get the coarse operator in its explicit form we use Newton method to the nonlinear system. So the solution of the nonlinear coarse operator in \eqref{approx1} is the zeros of the following nonlinear equations
\begin{equation}\label{nonl_eq}
H(Y)=Y+\epsilon^2 \Delta t D_h^2 Y - \Delta t D_hY^3 - \hat{Y},
\end{equation}
where $Y=U^{n+1}\in\mathbb{R}^{N_x-2}$ and $\hat{Y}=(I-\Delta t D_h)U^n\in\mathbb{R}^{N_x-2}$. After applying the Newton method on \eqref{nonl_eq} with iteration index $m$ and then simplifying we have 
\begin{equation}\label{Newton_iter}
Y_{m+1}=(I + \epsilon^2\Delta t D_h^2  - 3\Delta t D_h\diag(Y_m^2))^{-1} ( \hat{Y}-2\Delta t D_h \diag(Y_m^2)Y_m).
\end{equation}
Numerical experiments suggest that upon convergence of the Newton Method the term $\diag(Y_m^2) \approx I$ away from interface region of width $\epsilon$, similar behaviour also observed in \cite{brenner2018robust}. If one uses initial solution as an initial guess for the Newton method then the coarse operator \eqref{approx1} takes the following form 
\begin{equation}\label{co_op_npa2}
\mathcal{G}(T_{n+1}, T_n, U)= \left[(I + \Delta T D_h^2  - 3\Delta T D_h)^{-1} ( I-3\Delta T D_h)\right]^m U,
\end{equation}
for some Newton iteration $m$. Next we prove some auxiliary results.
\begin{lemma}[Growth of Coarse Operator in NPA-II]\label{growth_G_npa2}
The coarse operator in \eqref{co_op_npa2} satisfies the growth condition 
$\parallel \mathcal{G}(T_{n+1}, T_n, U)\parallel  \leq \parallel U \parallel, \forall U\in\mathbb{R}^{N_x-2} $.
\end{lemma}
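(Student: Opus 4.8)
\emph{Proof proposal.} The plan is to mimic the proof of Lemma~\ref{growth_G}: diagonalize $D_h$ and reduce the claim to a scalar estimate of the type already recorded in Lemma~\ref{contra_lemma}. Recall from \eqref{D_h} that $D_h$ is symmetric negative definite with distinct eigenvalues $\lambda_p<0$, so there is an orthogonal matrix $Q$ with $D_h=Q\Lambda Q^{t}$, $\Lambda=\diag(\lambda_1,\dots,\lambda_{N_x-2})$. Set
\[
B:=\bigl(I+\Delta T D_h^2-3\Delta T D_h\bigr)^{-1}\bigl(I-3\Delta T D_h\bigr),
\]
so that $\mathcal{G}(T_{n+1},T_n,U)=B^m U$ by \eqref{co_op_npa2}. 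Since the two factors defining $B$ are polynomials in $D_h$, they commute and $B$ is a rational function of $D_h$; hence $B=Q\,\diag(\mu_1,\dots,\mu_{N_x-2})\,Q^{t}$ is symmetric, with
\[
\mu_p=\frac{1-3\Delta T\lambda_p}{1-3\Delta T\lambda_p+\Delta T\lambda_p^2},\qquad p=1,\dots,N_x-2.
\]
First I would note that $B$ is well defined: as $\lambda_p<0$ the number $1-3\Delta T\lambda_p+\Delta T\lambda_p^2>1>0$, so $I+\Delta T D_h^2-3\Delta T D_h$ is positive definite and invertible.

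Next I would bound each $\mu_p$. Writing $\lambda_p=-y_p$ with $y_p>0$, we get $\mu_p=\dfrac{1+3\Delta T y_p}{1+3\Delta T y_p+\Delta T y_p^2}$, which has exactly the structure of the functions $g_i$ in Lemma~\ref{contra_lemma} (with $\epsilon^2$ replaced by $1$): the numerator is positive and the denominator exceeds it by $\Delta T y_p^2>0$, whence $0<\mu_p<1$ for every $p$. Therefore $B^m=Q\,\diag(\mu_1^m,\dots,\mu_{N_x-2}^m)\,Q^{t}$ again has all eigenvalues in $(0,1)$.

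Finally I would observe that, since $B^m$ is symmetric, its spectral $2$-norm equals its spectral radius, so $\parallel B^m\parallel=\max_{p}\mu_p^m<1$; hence for any $U\in\mathbb{R}^{N_x-2}$,
\[
\parallel\mathcal{G}(T_{n+1},T_n,U)\parallel=\parallel B^m U\parallel\leq\parallel B^m\parallel\,\parallel U\parallel\leq\parallel U\parallel,
\]
which is the asserted growth condition. The only point needing a little care is the passage from ``all eigenvalues in $(0,1)$'' to the operator-norm bound, and this is precisely why it is worth recording that $B$ --- being a rational function of the symmetric matrix $D_h$ --- is itself symmetric; everything else is the elementary scalar computation already used for Lemma~\ref{contra_lemma}.
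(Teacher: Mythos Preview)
Your argument is correct and is essentially the same as the paper's: both reduce the bound to the scalar estimate of Lemma~\ref{contra_lemma} applied to the eigenvalues of the bracketed matrix (which the paper denotes $P_3$), yielding $\parallel\mathcal{G}(T_{n+1},T_n,U)\parallel\le\parallel P_3\parallel^m\parallel U\parallel\le\parallel U\parallel$. You simply spell out the diagonalization and the passage from spectral radius to operator norm more explicitly than the paper does.
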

\begin{proof}
We have $\parallel \mathcal{G}(T_{n+1}, T_n, U)\parallel \leq  \parallel P_3\parallel^m \parallel U \parallel$. Now $\parallel P_3\parallel<1$ follows from Lemma \ref{contra_lemma}, hence the result.
\end{proof}
\begin{lemma}[Lipschitz of $\mathcal{G}$ in NPA-II]\label{Lip_G_npa2}
The coarse operator in \eqref{co_op_npa2} satisfies the Lipschitz condition 
$
\parallel \mathcal{G}(T_{n+1}, T_n, {U_1})-\mathcal{G}(T_{n+1}, T_n, {U_2})\parallel  \leq \parallel P_3\parallel\parallel {U_1}-{U_2}\parallel, \forall U_1, U_2\in\mathbb{R}^{N_x-2}.
$ 
\end{lemma}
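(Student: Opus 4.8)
The plan is to exploit the fact that, after the linearization $\diag(Y_m^2)\approx I$ used to pass from \eqref{Newton_iter} to \eqref{co_op_npa2}, the coarse operator $\mathcal{G}(T_{n+1},T_n,\cdot)$ is in fact \emph{linear}: it is multiplication by the matrix $P_3^m$, where $P_3=\left(I-3\Delta T D_h+\epsilon^2\Delta T D_h^2\right)^{-1}\left(I-3\Delta T D_h\right)$ is the matrix introduced just before Lemma \ref{contra_lemma}. So the Lipschitz estimate is nothing more than the operator-norm bound on $P_3^m$, exactly as in Lemma \ref{Lip_G} and Lemma \ref{growth_G_npa2}.

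First I would write, for arbitrary $U_1,U_2\in\mathbb{R}^{N_x-2}$,
\[
\mathcal{G}(T_{n+1},T_n,U_1)-\mathcal{G}(T_{n+1},T_n,U_2)=P_3^m(U_1-U_2),
\]
and then, by submultiplicativity of the induced matrix norm,
\[
\parallel \mathcal{G}(T_{n+1},T_n,U_1)-\mathcal{G}(T_{n+1},T_n,U_2)\parallel\leq \parallel P_3\parallel^{m}\parallel U_1-U_2\parallel .
\]
It remains to replace $\parallel P_3\parallel^m$ by $\parallel P_3\parallel$. Since the Newton process performs at least one iteration, $m\geq 1$; and $\parallel P_3\parallel<1$ is exactly the fact already invoked in the proof of Lemma \ref{growth_G_npa2}: $D_h$ is symmetric, hence $P_3$ is symmetric and $\parallel P_3\parallel_2$ equals its spectral radius, whose eigenvalues are $g_3(-\lambda_p)=\dfrac{1-3\Delta T\lambda_p}{\,1-3\Delta T\lambda_p+\epsilon^2\Delta T\lambda_p^2\,}\in(0,1)$ by Lemma \ref{contra_lemma}, using $\lambda_p<0$. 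Therefore $\parallel P_3\parallel^{m}\leq \parallel P_3\parallel$ for $m\geq 1$, which gives the claimed inequality.

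There is essentially no obstacle here; the only point deserving a line of care is the reduction of $\parallel P_3\parallel<1$ to the scalar estimate of Lemma \ref{contra_lemma} via symmetry of $D_h$, and even that is already available from Lemma \ref{growth_G_npa2}. In short, the terse justification "The result is straight forward," used by the authors for the analogous Lemma \ref{Lip_G}, applies here as well, with $P_1$ replaced by $P_3$ and an extra power $m$ absorbed using $\parallel P_3\parallel<1$.
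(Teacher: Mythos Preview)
Your argument is correct and is exactly the intended one: the paper's own proof is simply ``The result is straight forward,'' mirroring Lemma~\ref{Lip_G}, and your write-up just makes explicit the linearity $\mathcal{G}(T_{n+1},T_n,U)=P_3^mU$ together with $\parallel P_3\parallel<1$ (already used in Lemma~\ref{growth_G_npa2}) to absorb the extra power $m$.
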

\begin{proof} The result is straight forward.
\end{proof}
\begin{lemma}[LTE Differences in NPA-II]\label{LTE2}
Let $\mathcal{F}(T_{n+1}, T_n, {U})$ and $\mathcal{G}(T_{n+1}, T_n, {U})$ be the fine and coarse operator generated by the nonlinear scheme in \eqref{approx1}, then the following LTE differences hold  
$$
\mathcal{F}(T_{n+1}, T_n, {U})-\mathcal{G}(T_{n+1}, T_n, {U}) =c_2(U)\Delta T ^2 +c_3(U)\Delta T ^3 + \cdots ,$$ 
where $c_j(U)$ are continuously differentiable function for $j=2, 3, ...$
\end{lemma}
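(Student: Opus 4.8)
The plan is to mimic the proof of Lemma~\ref{LTE} verbatim; the only structural change is that the coarse operator is now the Newton-linearised closed form \eqref{co_op_npa2} of the nonlinear scheme \eqref{approx1} instead of the linear scheme \eqref{approx2}. Let $\mathcal{S}(T_{n+1},T_n,U)$ denote the exact solution operator of \eqref{CH} started from $U$ at $T_n$, and insert it as an intermediate quantity,
\[
\mathcal{F}(T_{n+1},T_n,U)-\mathcal{G}(T_{n+1},T_n,U)=\bigl(\mathcal{F}(T_{n+1},T_n,U)-\mathcal{S}(T_{n+1},T_n,U)\bigr)+\bigl(\mathcal{S}(T_{n+1},T_n,U)-\mathcal{G}(T_{n+1},T_n,U)\bigr).
\]

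First I would expand $\mathcal{S}-\mathcal{G}$. Since $\mathcal{G}$ advances the data by a single step of size $\Delta T$ of the first-order-in-time scheme \eqref{approx1} (in the form \eqref{co_op_npa2}), a Taylor expansion of the exact flow and of the one-step map about $(T_n,U)$ gives a local truncation error expansion $\mathcal{S}-\mathcal{G}=\hat c_2(U)\Delta T^2+\hat c_3(U)\Delta T^3+\cdots$, and because $f(u)=u^3-u$ is a polynomial and the right-hand side of \eqref{CH} depends smoothly on the data, every coefficient $\hat c_j$ is a continuously differentiable function of $U$. Next I would expand $\mathcal{F}-\mathcal{S}$: here $\mathcal{F}$ is the composition of $J$ steps of the same scheme \eqref{approx1} with step size $\Delta t=\Delta T/J$ and $J$ fixed, so the accumulated error over $[T_n,T_{n+1}]$ again admits an asymptotic expansion in powers of $\Delta T$ beginning at the second order, $\mathcal{F}-\mathcal{S}=\tilde c_2(U)\Delta T^2+\tilde c_3(U)\Delta T^3+\cdots$, with continuously differentiable coefficients. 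Adding the two expansions and putting $c_j:=\tilde c_j+\hat c_j$ yields the claimed identity with continuously differentiable $c_j$, and the lemma follows.

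The main obstacle — exactly as in Lemma~\ref{LTE} — is making these two asymptotic expansions precise: one has to confirm that the closed form \eqref{co_op_npa2}, obtained under the working approximation $\diag(Y_m^2)\approx I$ upon convergence of the Newton iteration \eqref{Newton_iter}, is a genuine consistent discretisation of \eqref{CH} of temporal order one, and that the error coefficients inherit $C^1$ regularity from the smoothness of $f$ together with the smooth dependence of the exact solution on its initial data; once that is granted, the remainder is the routine bookkeeping of composing $J$ fixed one-step maps and collecting like powers of $\Delta T$.
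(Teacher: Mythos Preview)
Your proposal is correct and follows essentially the same approach as the paper: the paper's proof is the single line ``The result follows from the Lemma~\ref{LTE}'', and you have faithfully unpacked that deferral by inserting the exact solution operator $\mathcal{S}$, expanding $\mathcal{F}-\mathcal{S}$ and $\mathcal{S}-\mathcal{G}$ separately as $O(\Delta T^2)$ series with smooth coefficients, and summing. Your additional remarks about consistency of \eqref{co_op_npa2} and regularity of the coefficients go beyond what the paper spells out, but they do not alter the route.
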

\begin{proof} The result follows from the Lemma \ref{LTE}.
\end{proof}

\begin{theorem}[Stability of NPA-II]
The algorithm NPA-II is stable, i.e., for each $n$ and $k$,  $ 
\parallel {U}_{n+1}^{k+1}\parallel \leq \parallel u^0\parallel + C (n+1)\Delta T^2\left(\max\limits_{0\leq j\leq n} \parallel {U}_{ j}^{k}\parallel \right)$, for some constant $C$. 
\end{theorem}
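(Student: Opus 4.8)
The plan is to mirror the proof of Theorem \ref{thm_stab_npa1} almost verbatim, since the only structural difference is that the nonlinear coarse operator of NPA-I (namely \eqref{coarse_approx2}) is replaced by the Newton-linearized nonlinear coarse operator \eqref{co_op_npa2}, and all the ingredients we need for the latter have already been established. First I would take norms in the Parareal correction scheme \eqref{parareal} with $\mathcal{F}$ the nonlinear fine propagator from \eqref{approx1} and $\mathcal{G}$ the coarse operator \eqref{co_op_npa2}, obtaining
\begin{equation*}
\parallel {U}_{n+1}^{k+1}\parallel \leq \parallel \mathcal{G}(T_{n+1}, T_n, {U}_n^{k+1})\parallel + \parallel \mathcal{F}(T_{n+1}, T_n, {U}_n^{k}) - \mathcal{G}(T_{n+1}, T_n, {U}_n^{k})\parallel .
\end{equation*}

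Next I would bound the two terms on the right. For the first term I invoke Lemma \ref{growth_G_npa2}, which gives $\parallel \mathcal{G}(T_{n+1}, T_n, {U}_n^{k+1})\parallel \leq \parallel {U}_n^{k+1}\parallel$. For the second term I invoke the LTE difference expansion in Lemma \ref{LTE2}, which yields $\parallel \mathcal{F}(T_{n+1}, T_n, {U}_n^{k}) - \mathcal{G}(T_{n+1}, T_n, {U}_n^{k})\parallel \leq C \Delta T^2 \parallel {U}_n^{k}\parallel$ for a constant $C$ absorbing the leading coefficient $c_2$ and the higher-order tail (using continuity/boundedness of the $c_j$ on the relevant range of arguments, exactly as in \eqref{rec_relation_1}). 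Combining, I get the scalar recurrence $\parallel {U}_{n+1}^{k+1}\parallel \leq \parallel {U}_{n}^{k+1}\parallel + C\Delta T^2 \parallel {U}_{n}^{k}\parallel$, which telescopes: summing over $n$ from $0$ gives $\parallel {U}_{n+1}^{k+1}\parallel - \parallel {U}_{0}^{k+1}\parallel \leq C\Delta T^2 \sum_{j=0}^{n}\parallel {U}_{j}^{k}\parallel \leq C(n+1)\Delta T^2 \bigl(\max_{0\leq j\leq n}\parallel {U}_{j}^{k}\parallel\bigr)$. Finally, substituting $U_0^{k+1}=u^0$ from \eqref{parareal} produces the stated bound.

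I do not expect any genuine obstacle here: the proof is a routine transcription, and every nontrivial estimate (growth of the new coarse operator, the LTE expansion for the nonlinear--nonlinear pair) is already packaged as Lemma \ref{growth_G_npa2} and Lemma \ref{LTE2} respectively. The one place that deserves a word of care is the constant $C$ in the LTE bound: strictly speaking it depends on a bound for $\parallel c_2(U_n^k)\parallel$ (and the higher $c_j$), so one should either note that in the regime of interest the iterates stay in a bounded set on which the continuously differentiable $c_j$ are bounded, or simply state $C$ as the implied constant exactly as was done for NPA-I in Theorem \ref{thm_stab_npa1}. Since the paper has already adopted that convention for the companion theorem, I would do the same and keep the argument a single short paragraph ending with ``Emulating the proof of Theorem \ref{thm_stab_npa1} and using Lemma \ref{growth_G_npa2} and Lemma \ref{LTE2}, we obtain the stated result.''
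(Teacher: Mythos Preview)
Your proposal is correct and matches the paper's own approach exactly: the paper's proof is the single line ``The proof can be obtained by following the proof of Theorem \ref{thm_stab_npa1},'' and you have simply written out the details of that transcription, invoking Lemma \ref{growth_G_npa2} and Lemma \ref{LTE2} in place of Lemma \ref{growth_G} and Lemma \ref{LTE}.
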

\begin{proof}
The proof can be obtained by following the proof of Theorem \ref{thm_stab_npa1}.
\end{proof}
\begin{theorem}[Convergence of NPA-II]\label{thm_npa2}
If the propagator $\mathcal{F}$ and $\mathcal{G}$ in \eqref{approx1} satisfy LTE differences given in Lemma \ref{LTE2} and $\mathcal{G}$ satisfies the Lipschitz condition given in Lemma \ref{Lip_G_npa2}, then the algorithm NPA-II satisfies the following error estimate
$$\max\limits_{1\leq j\leq N}\parallel {E}_{j}^{k+1}\parallel \leq \left( C_2 \Delta T^2\right)^{k+1}\min \left\{ \left(\frac{1-\beta^{N-1}}{1-\beta}\right)^{k+1},  \binom{N-1}{k+1} \right\} \max\limits_{1\leq j\leq N}\parallel {E}_{j}^{0}\parallel,$$
where $\beta=\parallel P_3\parallel$, and $C_2$ is a constant related to LTE.
\end{theorem}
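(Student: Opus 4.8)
The plan is to follow the same route as the proof of Theorem~\ref{thm_npa1}, simply replacing the linear coarse operator \eqref{coarse_approx2} by the Newton-based nonlinear coarse operator \eqref{co_op_npa2} and the quantity $\parallel P_1\parallel$ by $\parallel P_3\parallel$. First I would start from the Parareal correction scheme \eqref{parareal} and rewrite the error $E_{n+1}^{k+1}=U(T_{n+1})-U_{n+1}^{k+1}$ exactly as in \eqref{err_npa1}: adding and subtracting $\mathcal{F}(T_{n+1},T_n,U_n)$, $\mathcal{G}(T_{n+1},T_n,U_n)$ and $\mathcal{G}(T_{n+1},T_n,U_n^k)$, so that $E_{n+1}^{k+1}$ splits into (a) the difference of the LTE-difference operator $\mathcal{F}-\mathcal{G}$ evaluated at the exact value $U_n$ and at the iterate $U_n^k$, and (b) the coarse increment $\mathcal{G}(T_{n+1},T_n,U_n)-\mathcal{G}(T_{n+1},T_n,U_n^{k+1})$.

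Next I would estimate each piece. For (a), invoke Lemma~\ref{LTE2}: since $\mathcal{F}-\mathcal{G}=c_2(U)\Delta T^2+c_3(U)\Delta T^3+\cdots$ with each $c_j$ continuously differentiable, a term-by-term mean-value bound (exactly as in \eqref{lte_thm}) gives an estimate of the form $C_2\Delta T^2\parallel U_n-U_n^k\parallel=C_2\Delta T^2\parallel E_n^k\parallel$, where $C_2$ absorbs the local Lipschitz constants of the $c_j$. For (b), apply Lemma~\ref{Lip_G_npa2} to bound it by $\parallel P_3\parallel\parallel E_n^{k+1}\parallel$. Combining the two yields the scalar recurrence $\parallel E_{n+1}^{k+1}\parallel\le C_2\Delta T^2\parallel E_n^k\parallel+\parallel P_3\parallel\parallel E_n^{k+1}\parallel$.

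Then, collecting the components $\parallel E_j^k\parallel$ into the vector $e^k$ and inverting the resulting lower bidiagonal matrix via Lemma~\ref{mat_inv} (as in the proof of Theorem~\ref{thm2}), I would write $e^{k+1}\le C_2\Delta T^2\,\mathbb{T}(\beta)\,e^k$ with $\beta=\parallel P_3\parallel$, hence $e^{k+1}\le (C_2\Delta T^2)^{k+1}\,\mathbb{T}(\beta)^{k+1}e^0$ after iteration. Since $\mathbb{T}(\beta)$ is strictly lower triangular it is nilpotent, which gives finite-step (at most $N$-step) convergence; and since $\beta=\parallel P_3\parallel\in(0,1)$ by Lemma~\ref{contra_lemma}, Lemma~\ref{mat_norm} applies. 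Taking the infinity norm and bounding $\parallel\mathbb{T}(\beta)^{k+1}\parallel_\infty$ by $\min\{((1-\beta^{N-1})/(1-\beta))^{k+1},\binom{N-1}{k+1}\}$ via Lemma~\ref{mat_power} and Lemma~\ref{mat_norm} produces the stated estimate.

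I do not expect a genuine obstacle beyond what was already handled in Theorem~\ref{thm_npa1}; the only points needing a little care are structural rather than technical. One should make explicit that the coarse operator here is the $m$-fold composition \eqref{co_op_npa2} arising from a fixed number of Newton steps with the interface approximation $\diag(Y_m^2)\approx I$, so that (i) its Lipschitz constant is still $\parallel P_3\parallel<1$, which is needed both for the recurrence and to guarantee $\beta\in(0,1)$ in Lemma~\ref{mat_norm}, and (ii) the LTE constant $C_2$ is independent of $n$, $k$ and $\Delta T$ (it may depend on $m$ and on a bound for the iterates), so that the geometric-in-$(C_2\Delta T^2)$ contraction is meaningful once $\Delta T$ is small enough.
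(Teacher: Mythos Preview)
Your proposal is correct and matches the paper's approach exactly: the paper's own proof of this theorem is simply the one-line remark that it is ``similar to the proof of Theorem~\ref{thm_npa1}'', and what you have written is precisely that argument with $\parallel P_1\parallel$ replaced by $\parallel P_3\parallel$, Lemma~\ref{LTE} by Lemma~\ref{LTE2}, and Lemma~\ref{Lip_G} by Lemma~\ref{Lip_G_npa2}. Your closing caveats about the approximation $\diag(Y_m^2)\approx I$ and the dependence of $C_2$ are reasonable structural observations but are treated identically (i.e., by assumption) in the paper.
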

\begin{proof}
The proof is similar to the proof of Theorem \ref{thm_npa1}.
\end{proof}
\remark
\begin{enumerate}
\item
One can obtain the convergence estimate of NPA-II at the semi-discrete level by estimating coarse operator at the semi-discrete level.
\item Explicit expression of the linear and nonlinear Paraeal algorithms in 2D or 3D can be achieved by extending the 1D case naturally. 
\item
Convergence proof of Parareal method in higher dimension follows from the 1D case by deriving the discrete Laplacian $D_h$ for regular or irregular computational domain.
\item 
The term $\binom{N-1}{k+1}=\frac{1}{(k+1)!}\prod_{j=0}^{k-1}(N-j)$ which appears in all of the convergence results says that methods converges at most $N+1$ iteration. So we always have finite step convergence to the fine solution.
\end{enumerate}

\section{Numerical Illustration}\label{Section3}
In this section we present the numerical experiments for the linear and non-linear Parareal algorithms, which are analyzed in this article. The Parareal iterations start with an initial guess given by coarse operator and stop as the error measured in $\parallel U-U^k\parallel_{L^{\infty}(0,T;L^2(\Omega))}$ reaches a tolerance of $10^{-6}$, where $U$ is the discrete fine solution and $U^k$ is the discrete Parareal solution at $k$-th iteration. We consider the spatial domain $\Omega=(0,1)$ in 1D and $\Omega=(0,1)^2$ in 2D.
\subsection{Numerical Experiments of PA-I}
We first discuss the numerical experiments in 1D. We run the PA-I algorithm with fixed parameters $T=1, h=1/64, N=20, J=200$ and two different $\epsilon=0.0725, 0.725$. The comparison of theoretical error estimate from Theorem \ref{thm2} and numerical error reduction can be seen in Figure \ref{diff_errbound}. We observe that for larger $\epsilon$ the theoretical bound given in Theorem \ref{thm2} is much sharper than the bound corresponding to smaller $\epsilon$. The reason being is that even though $\alpha<1$ \& $\beta<1$ in Theorem \ref{thm2} for every choice of $\epsilon$, the values of $\alpha, \beta$ increases to one as $\epsilon$ decreases.
\begin{figure}[h]
    \centering
    \subfloat{{\includegraphics[height=3.5cm,width=6cm]{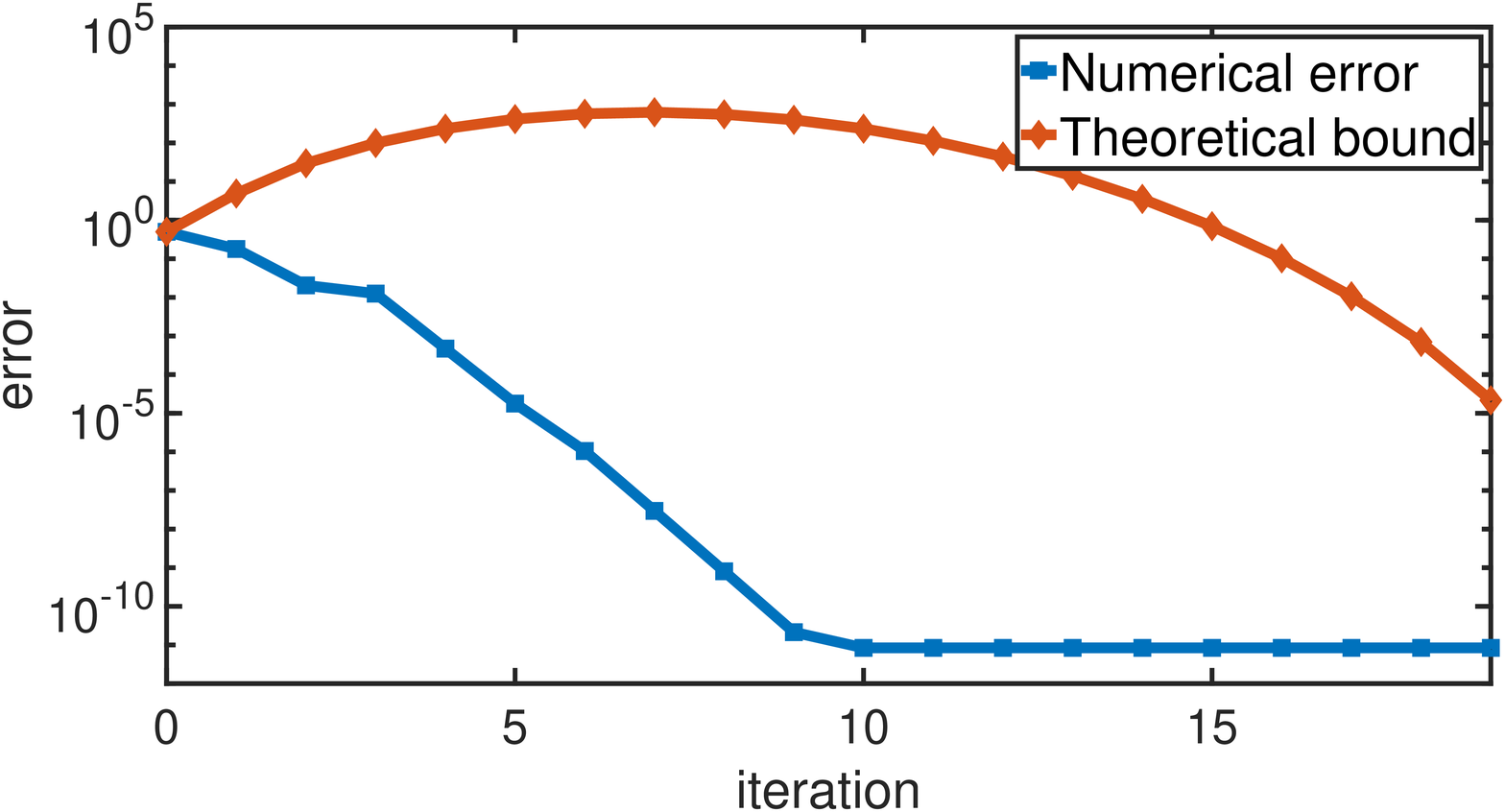} }}
     \subfloat{{\includegraphics[height=3.5cm,width=6cm]{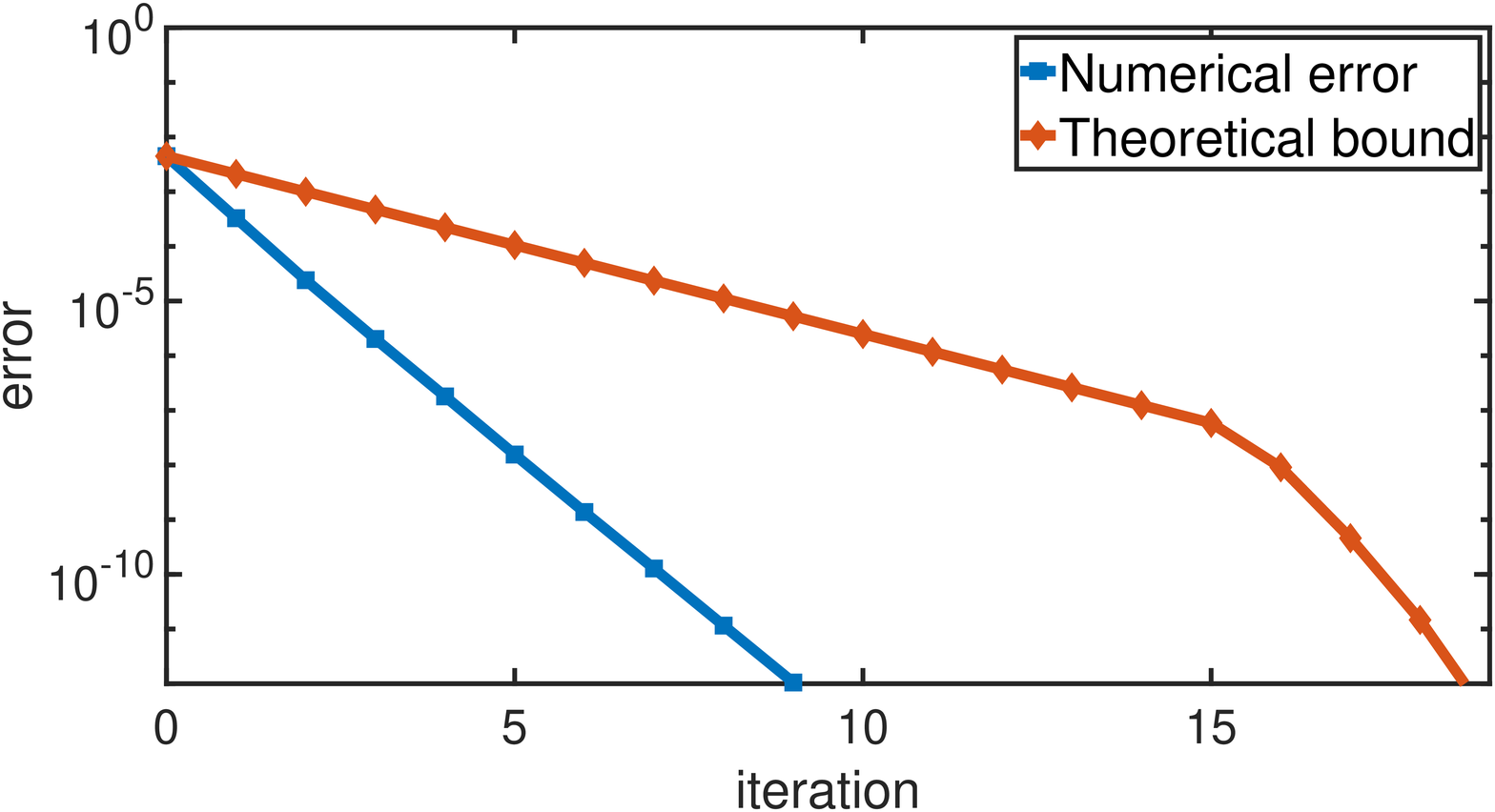} }}
    \caption{PA-I: Comparison of theoretical and numerical error. On the left $\epsilon=0.0725$, and on the right $\epsilon=0.725$.}
    \label{diff_errbound}
\end{figure}
Now we study the convergence behaviour of PA-I on the choice of $\Delta T$. In Figure \ref{diff_dtdx} we plot the error curves for different $\Delta T$ with fixed parameters $\epsilon=0.0725, h=1/64, J=200$ on the left panel and we can see that the method works well for large $\Delta T$. On the right in Figure \ref{diff_dtdx} we plot the error curves for different mesh sizes with $T=1, \epsilon=0.0725, h=1/64, \Delta t=1/200$. We observe that convergence is independent of mesh parameters.
\begin{figure}[h]
    \centering
    \subfloat{{\includegraphics[height=3.5cm,width=6cm]{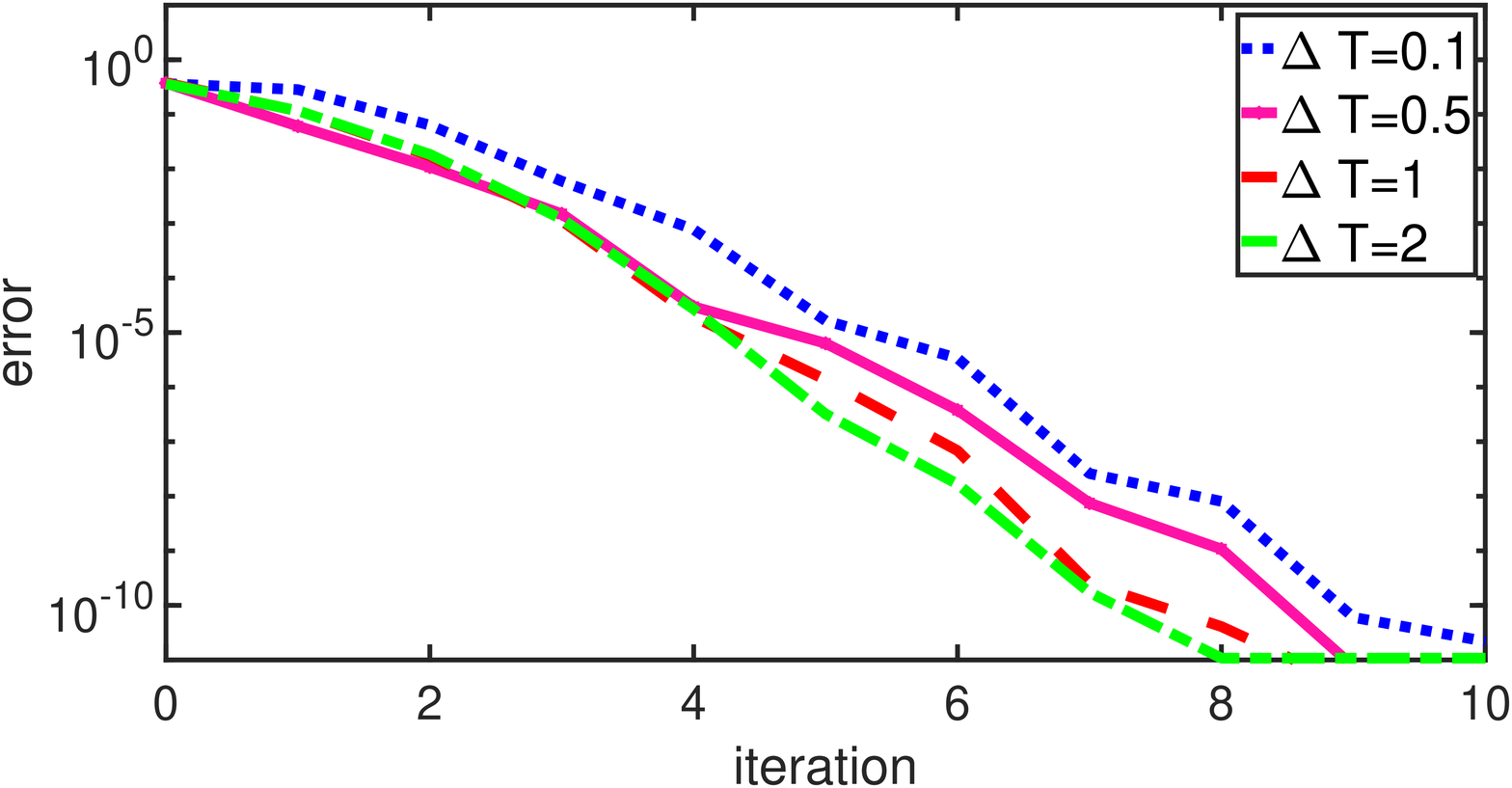} }}
     \subfloat{{\includegraphics[height=3.5cm,width=6cm]{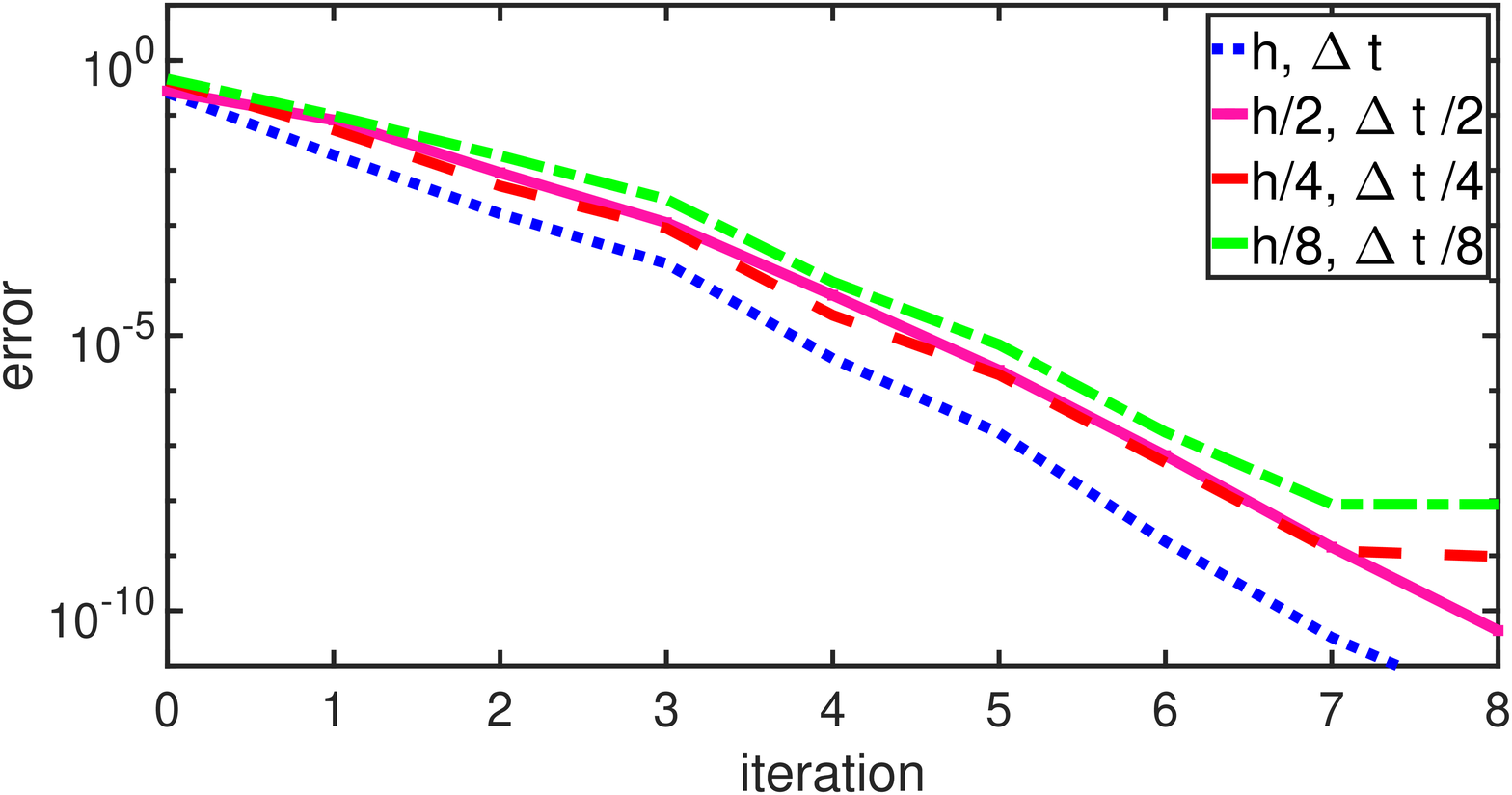} }}
    \caption{PA-I: On the left convergence for different $\Delta T$; On the right convergence for different $h, \Delta t$.}
    \label{diff_dtdx}
\end{figure}
We plot the error curves on the left panel in Figure \ref{diff_T_N_eps} for short as well as long time window with $\epsilon=0.0725, J=200$ and $h=1/64$. The method converges in four iterations to the fine solution of temporal accuracy $O(10^{-4})$ for different $T$. By ignoring the computational cost of the coarse operator, we can see that the Parareal method is 40 times faster than serial method on a single processor for $T=8$. It is evident from the left plot of Figure \ref{diff_T_N_eps} that one can achieve more speed up by including more processors ($N$). To see the dependency on the parameter $\epsilon$, we plot the error curves on the right panel in Figure \ref{diff_T_N_eps} for different $\epsilon$ by taking $T=1, N=50, J=200$. We observe that the method is almost immune to the choice of $\epsilon$.
\begin{figure}
    \centering
    \subfloat{{\includegraphics[height=3.5cm,width=6cm]{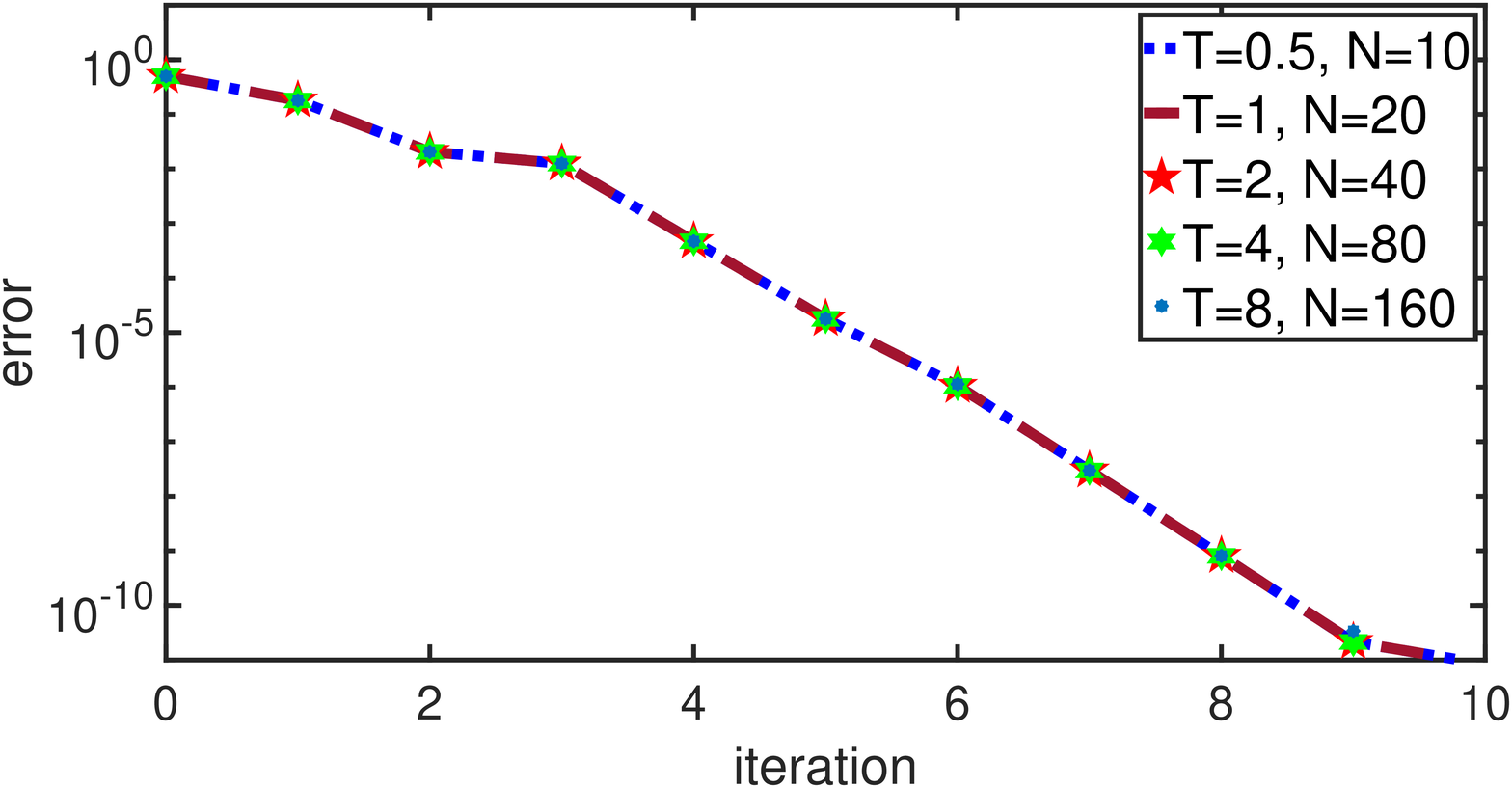} }}
     \subfloat{{\includegraphics[height=3.5cm,width=6cm]{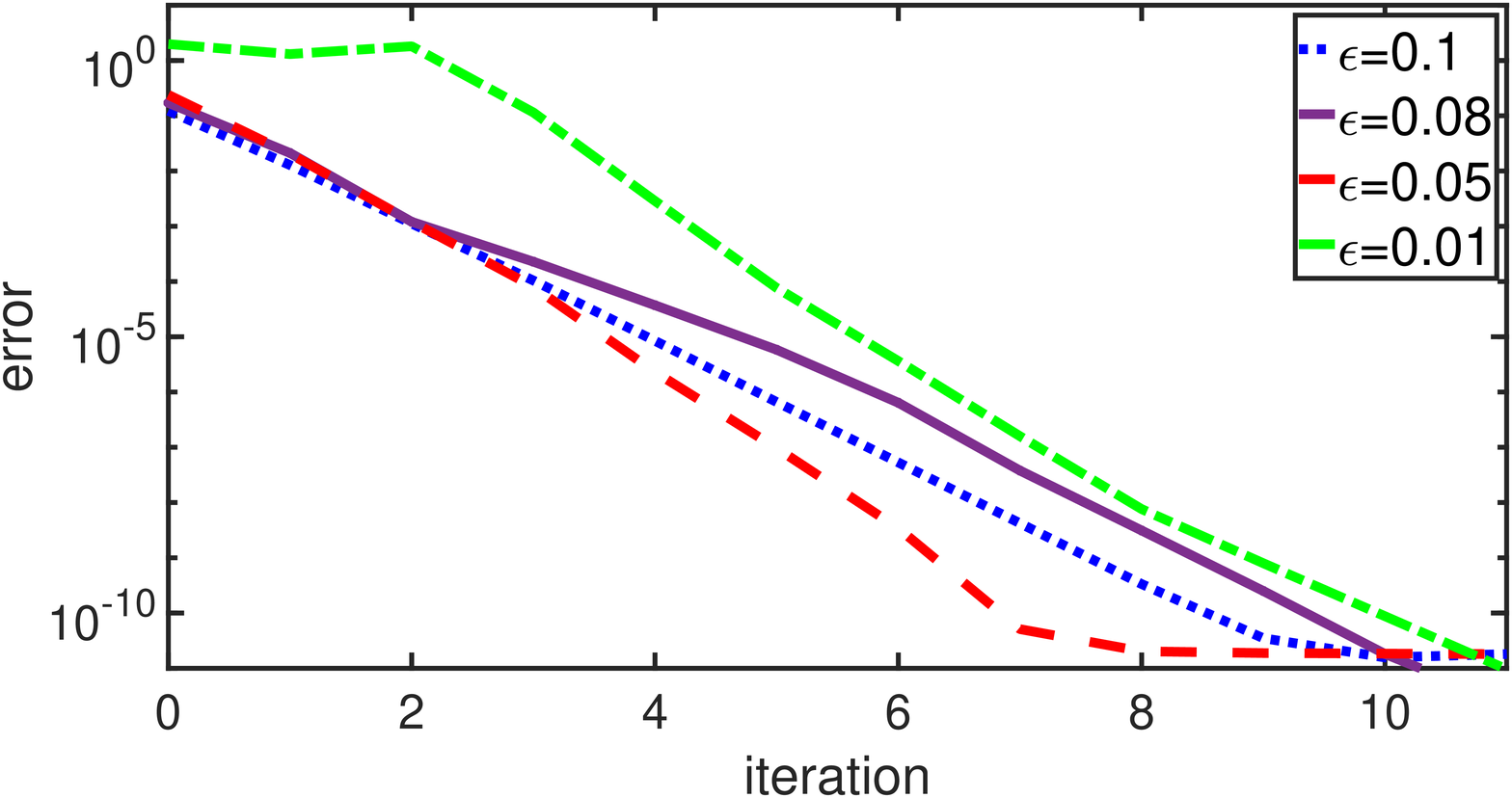} }}
    \caption{PA-I: On the left convergence for different $T, N$, and on the right convergence for different $\epsilon$.}
    \label{diff_T_N_eps}
\end{figure}

To perform the numerical experiments in 2D we take the discretization parameter $h=1/32$ on both direction. We plot the comparison of error contraction on the left panel in Figure \ref{diff_T_N_bound} for $T=1, N=20, J=200$ and $\epsilon=0.0725$. We plot the error curves on the right in Figure \ref{diff_T_N_bound} for short as well as long time window with $\epsilon=0.0725, J=200$. The method converges in four iterations to the fine solution of temporal accuracy $O(10^{-4})$ for different $T$. We observe similar convergence behaviour of PA-I in 2D as in 1D with respect to different situation and so we skip those experiments here.
\begin{figure}
    \centering
    \subfloat{{\includegraphics[height=3.5cm,width=6cm]{} }}
     \subfloat{{\includegraphics[height=3.5cm,width=6cm]{diff_T_N_pa1} }}
    \caption{PA-I: On the left comparison of theoretical and numerical error, and on the right convergence for different $T, N$.}
    \label{diff_T_N_bound}
\end{figure}
\subsection{Numerical Experiments of PA-II}
1D case: The comparison of numerical error and theoretical estimate from Theorem \ref{thm4} can be seen from the left plot in Figure \ref{diff_errbound_pa2} for $T=1, h=1/64, N=20, J=200$ and $\epsilon=0.0725$. On the right we plot the error curves for more refined solution for $T=1, \epsilon=0.0725, h=1/64, \Delta t=1/200$. We can see that the convergence is independent of mesh parameters.
\begin{figure}[h]
    \centering
    \subfloat{{\includegraphics[height=3.5cm,width=6cm]{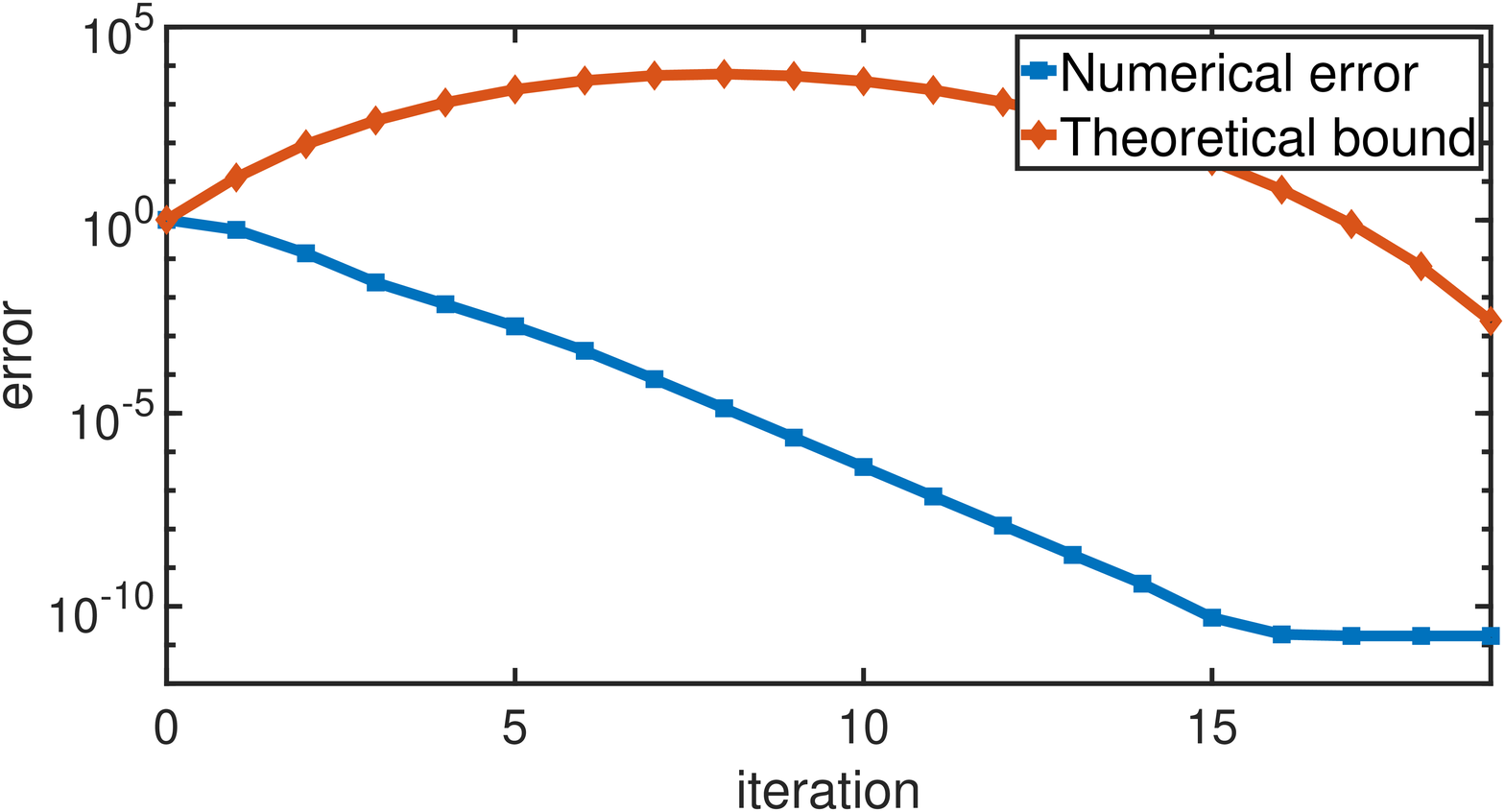} }}
     \subfloat{{\includegraphics[height=3.5cm,width=6cm]{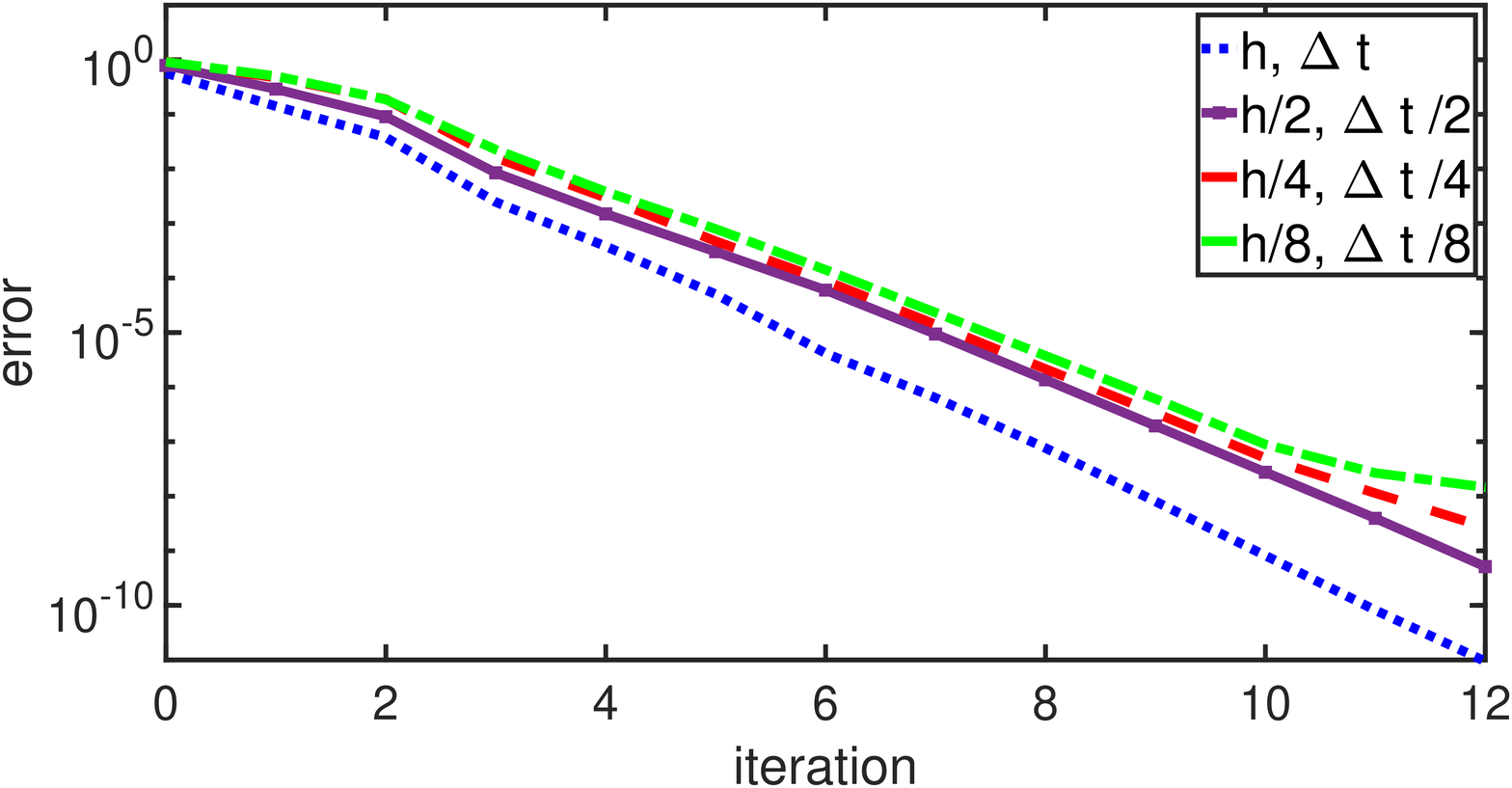} }}
    \caption{PA-II: On the left comparison of theoretical and numerical error, and on the right convergence for different mesh sizes.}
    \label{diff_errbound_pa2}
\end{figure}
We plot the error curves on the left in Figure \ref{diff_TN_pa2} for short as well as long time window with $\epsilon=0.0725, J=200$ and $h=1/64$. We can see that one get the speed up compared to serial solve. To see the dependency on the parameter $\epsilon$, we plot the error curves on the right in Figure \ref{diff_TN_pa2} for different $\epsilon$ by taking $T=1, N=50, J=200$. We can see that the PA-II is sensitive to the choice of $\epsilon$, namely for the very small $\epsilon$.
\begin{figure}
    \centering
    \subfloat{{\includegraphics[height=3.5cm,width=6cm]{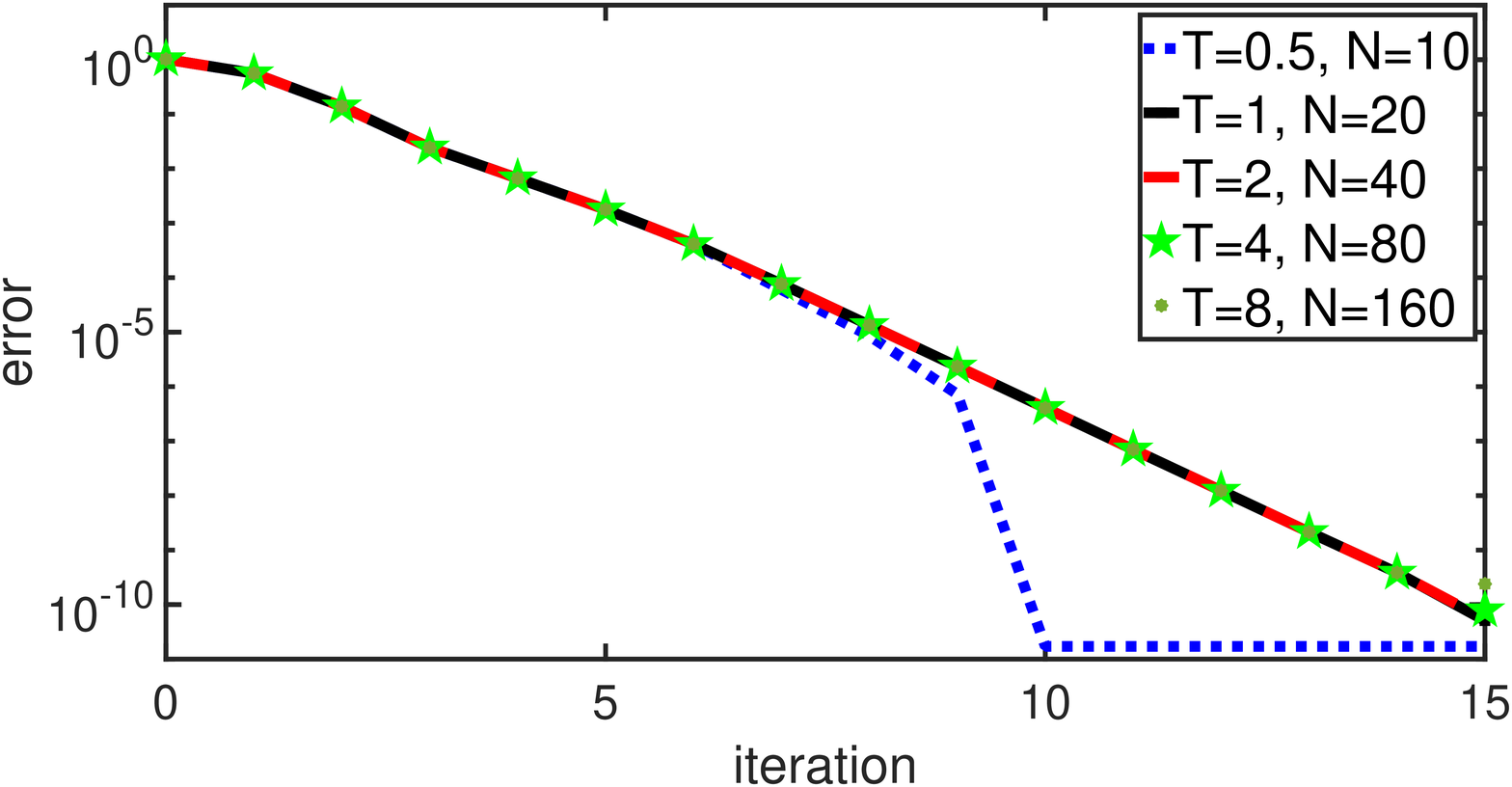} }}
     \subfloat{{\includegraphics[height=3.5cm,width=6cm]{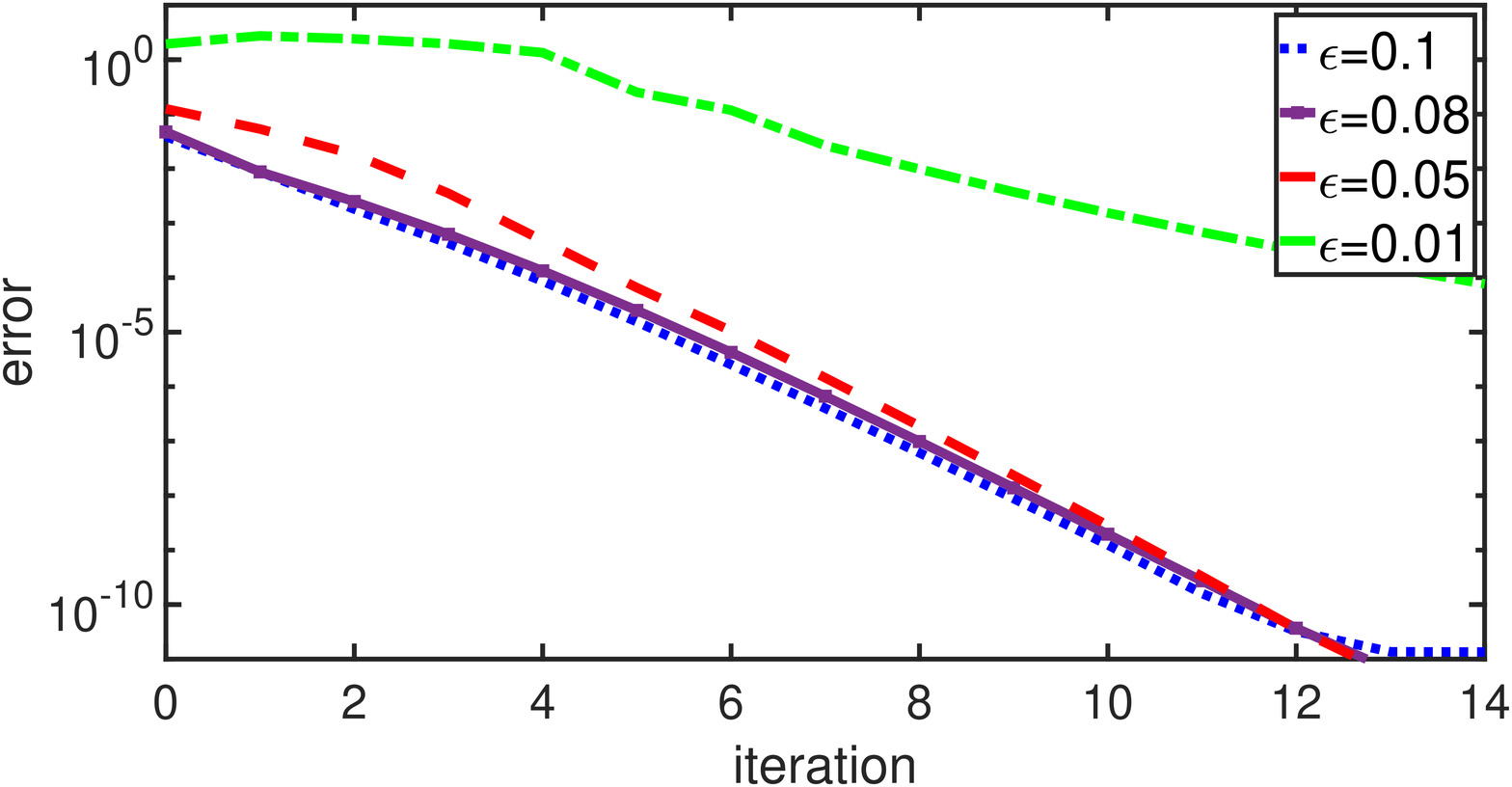} }}
    \caption{PA-II: On the left convergence for different $T, N$, and on the right $\epsilon$ dependency on the convergence.}
    \label{diff_TN_pa2}
\end{figure}

 2D case: We take the same discretization parameter $h=1/32$ on both direction. We plot the comparison of error contraction on the left panel in Figure \ref{diff_T_N_bound_pa2} for $T=1, N=20, J=200$ and $\epsilon=0.0825$. We plot the error curves on the right in Figure \ref{diff_T_N_bound_pa2} for short as well as long time window with $\epsilon=0.0725, J=200$. We observe similar convergence behaviour of PA-II in 2D as in 1D with respect to different situation.
\begin{figure}
    \centering
    \subfloat{{\includegraphics[height=3.5cm,width=6cm]{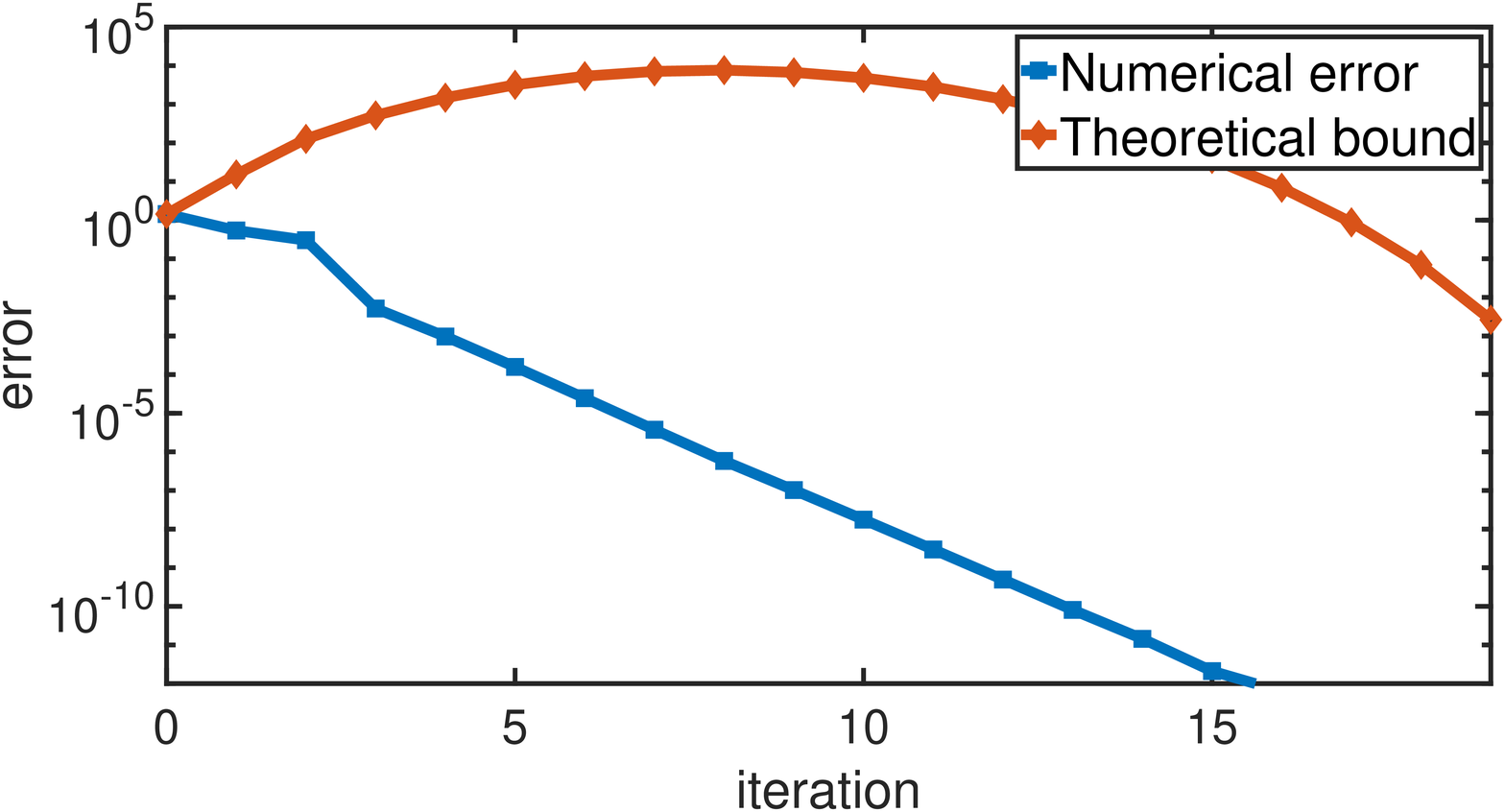} }}
     \subfloat{{\includegraphics[height=3.5cm,width=6cm]{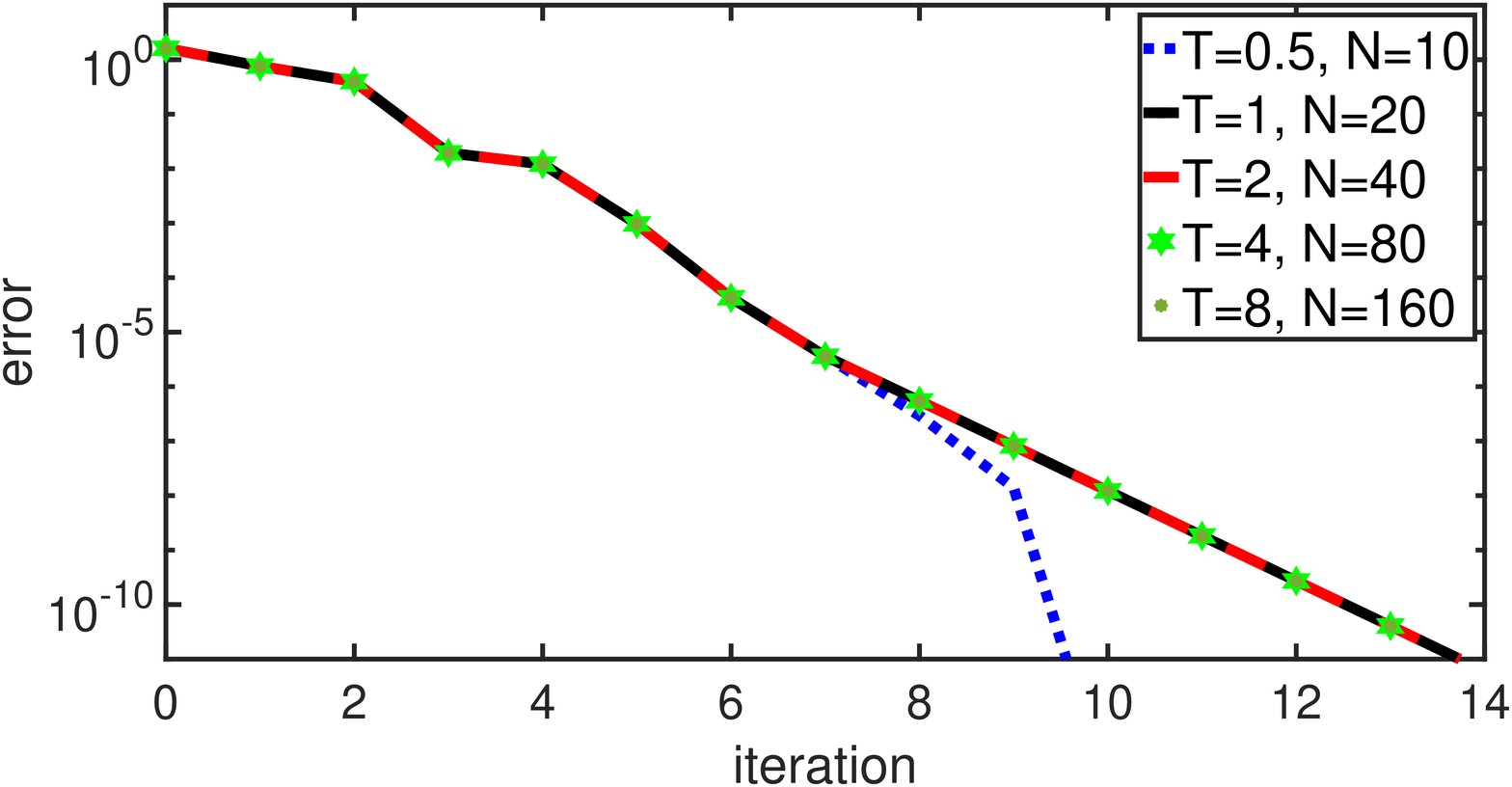} }}
    \caption{PA-II: On the left comparison of theoretical and numerical error, and on the right convergence for different $T, N$.}
    \label{diff_T_N_bound_pa2}
\end{figure}
\subsection{Numerical Experiments of PA-III}
1D case: The comparison of numerical error and theoretical estimates from Theorem \ref{thm6} can be seen in the left plot of Figure \ref{diff_errbound_pa3} for $T=1, h=1/64, N=20, J=200$ and $\epsilon=0.0725$. On the right panel we plot the error curves for more refined solution for $T=1, \epsilon=0.0725, h=1/64, \Delta t=1/200$. We can see that convergence is independent of mesh parameters.
\begin{figure}[h]
    \centering
    \subfloat{{\includegraphics[height=3.5cm,width=6cm]{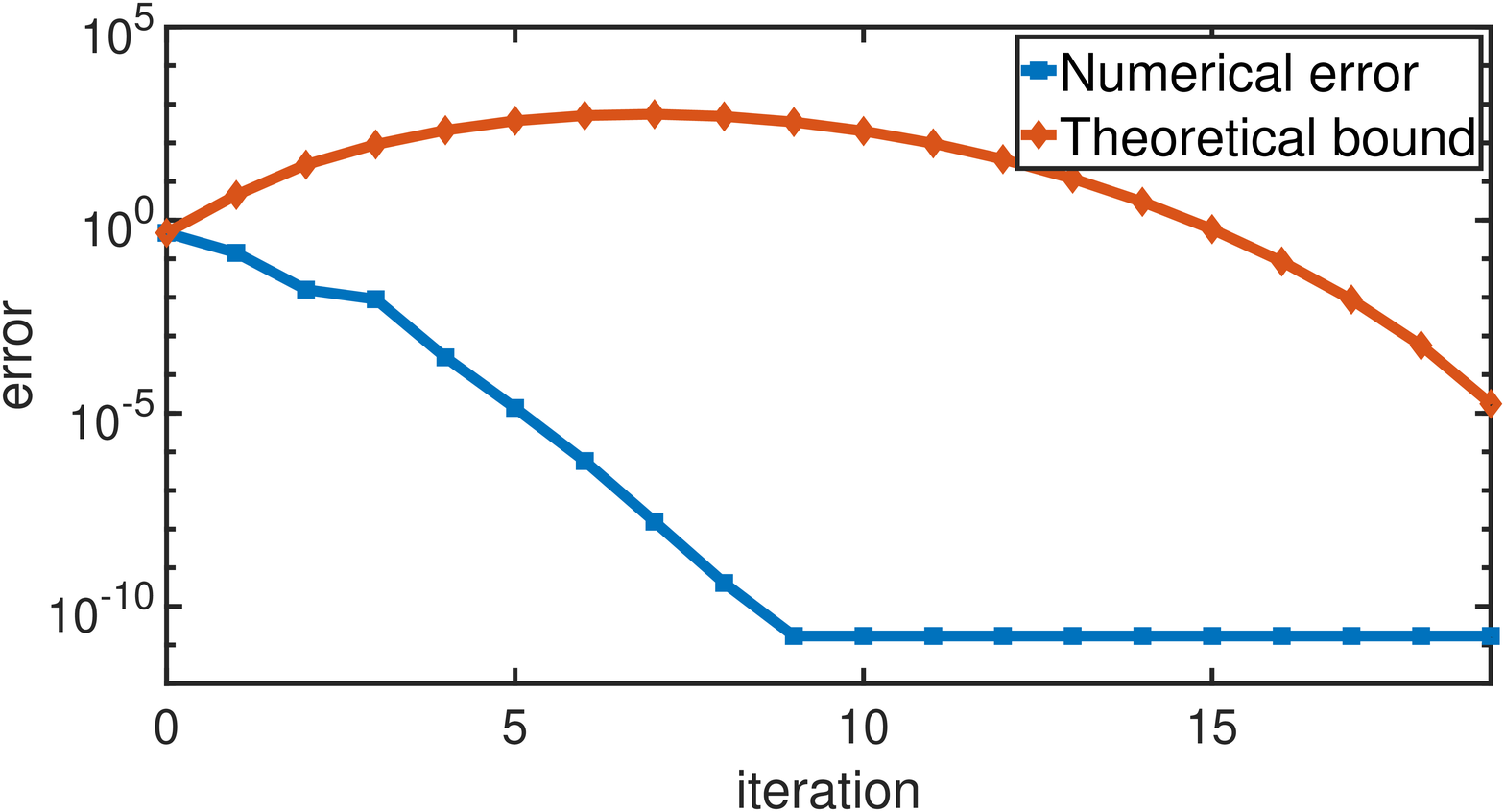} }}
     \subfloat{{\includegraphics[height=3.5cm,width=6cm]{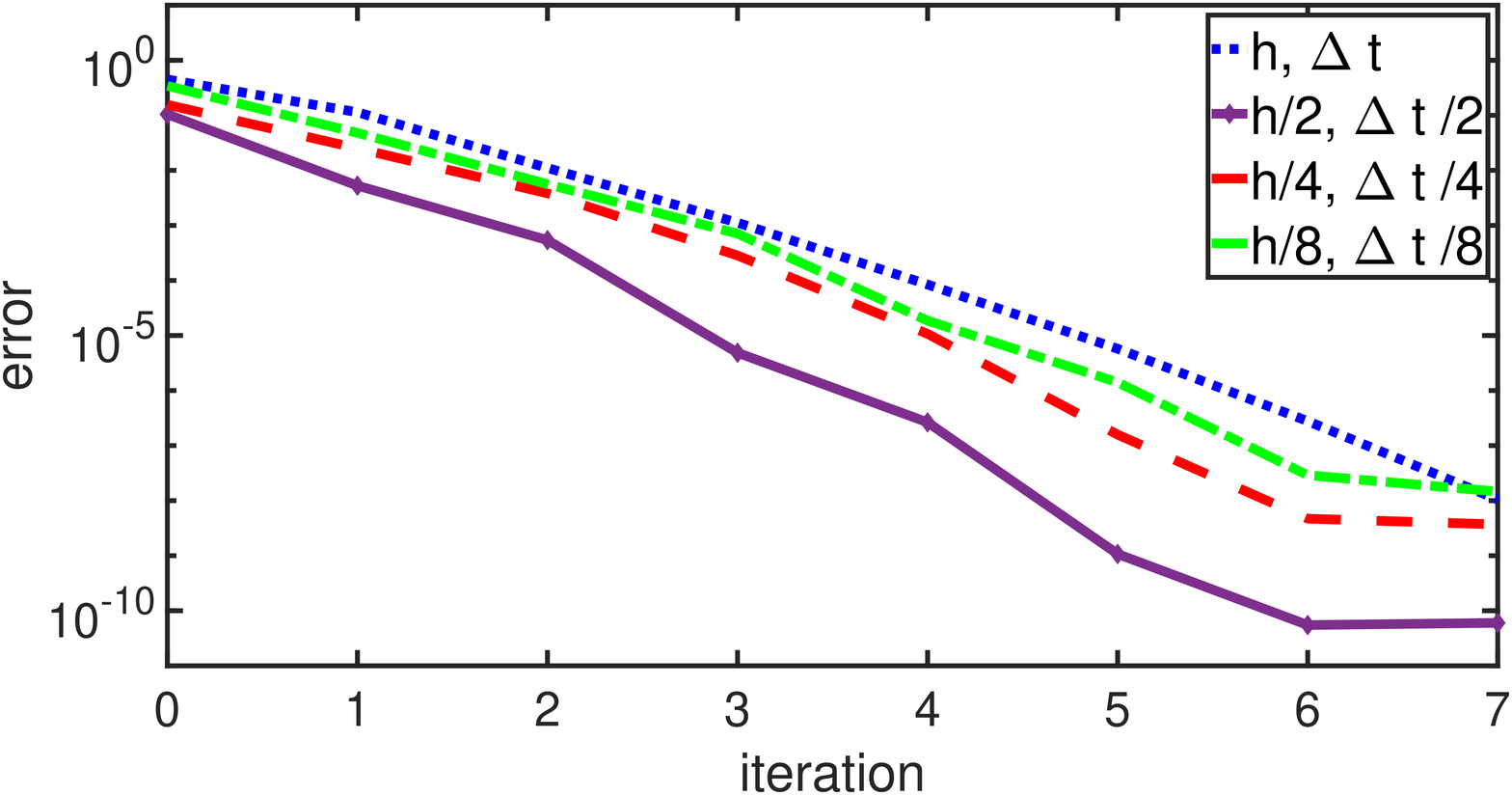} }}
    \caption{PA-III: On the left comparison of theoretical and numerical error, and on the right convergence for different mesh sizes.}
    \label{diff_errbound_pa3}
\end{figure}
We plot the error curves on the left in Figure \ref{diff_TN_pa3} for short as well as long time window with $\epsilon=0.0725, J=200$ and $h=1/64$. The method converges in four iteration to the fine resolution of temporal accuracy $O(10^{-4})$ for different $T$ and one get the speed up compared to sequential solve. To see the dependency on the parameter $\epsilon$, we plot the error curves on the right in Figure \ref{diff_TN_pa3} for different $\epsilon$ by taking $T=1, N=50, J=200$. We can see that the convergence of PA-III is independent of the choice of $\epsilon$. As PA-II and PA-III converge to the fine solution given by \eqref{approx3}, we can compare them. Since PA-II is sensitive towards small $\epsilon$, therefore PA-III is the best choice to approximate fine solution given by \eqref{approx3}.
\begin{figure}
    \centering
    \subfloat{{\includegraphics[height=3.5cm,width=6cm]{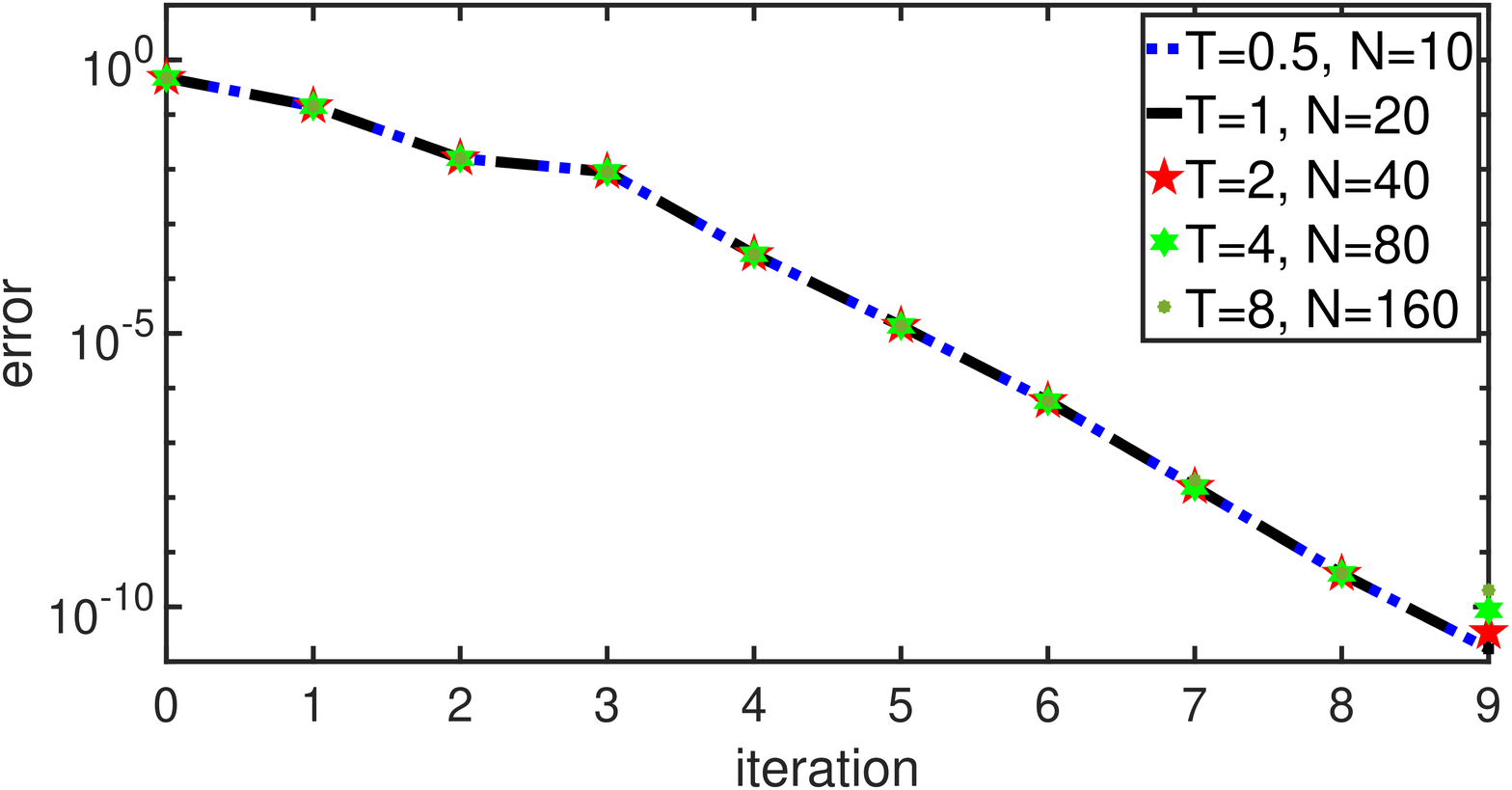} }}
     \subfloat{{\includegraphics[height=3.5cm,width=6cm]{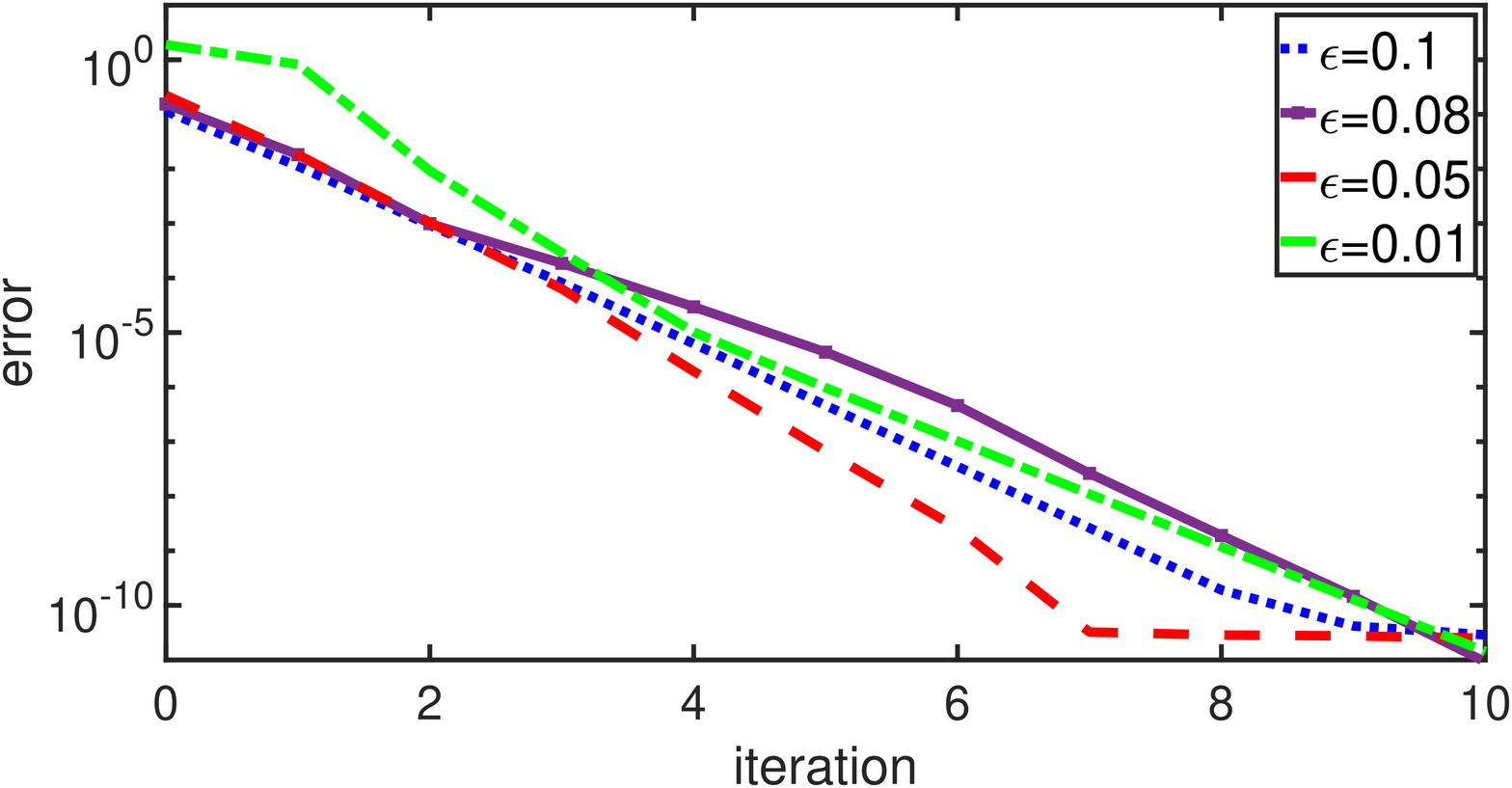} }}
    \caption{PA-III: On the left convergence for different $T, N$, and on the right $\epsilon$ dependency on the convergence.}
    \label{diff_TN_pa3}
\end{figure}

 2D case: We take the discretization parameter $h=1/32$ on both direction and plot the comparison of error contraction on the left in Figure \ref{diff_T_N_bound_pa3} for $T=1, N=20, J=200$ and $\epsilon=0.0625$. We plot the error curves on the right for short as well as long time window with $\epsilon=0.0725, J=200$.  We observe similar convergence behaviour of PA-III in 2D as in 1D for different situation.
\begin{figure}
    \centering
    \subfloat{{\includegraphics[height=3.5cm,width=6cm]{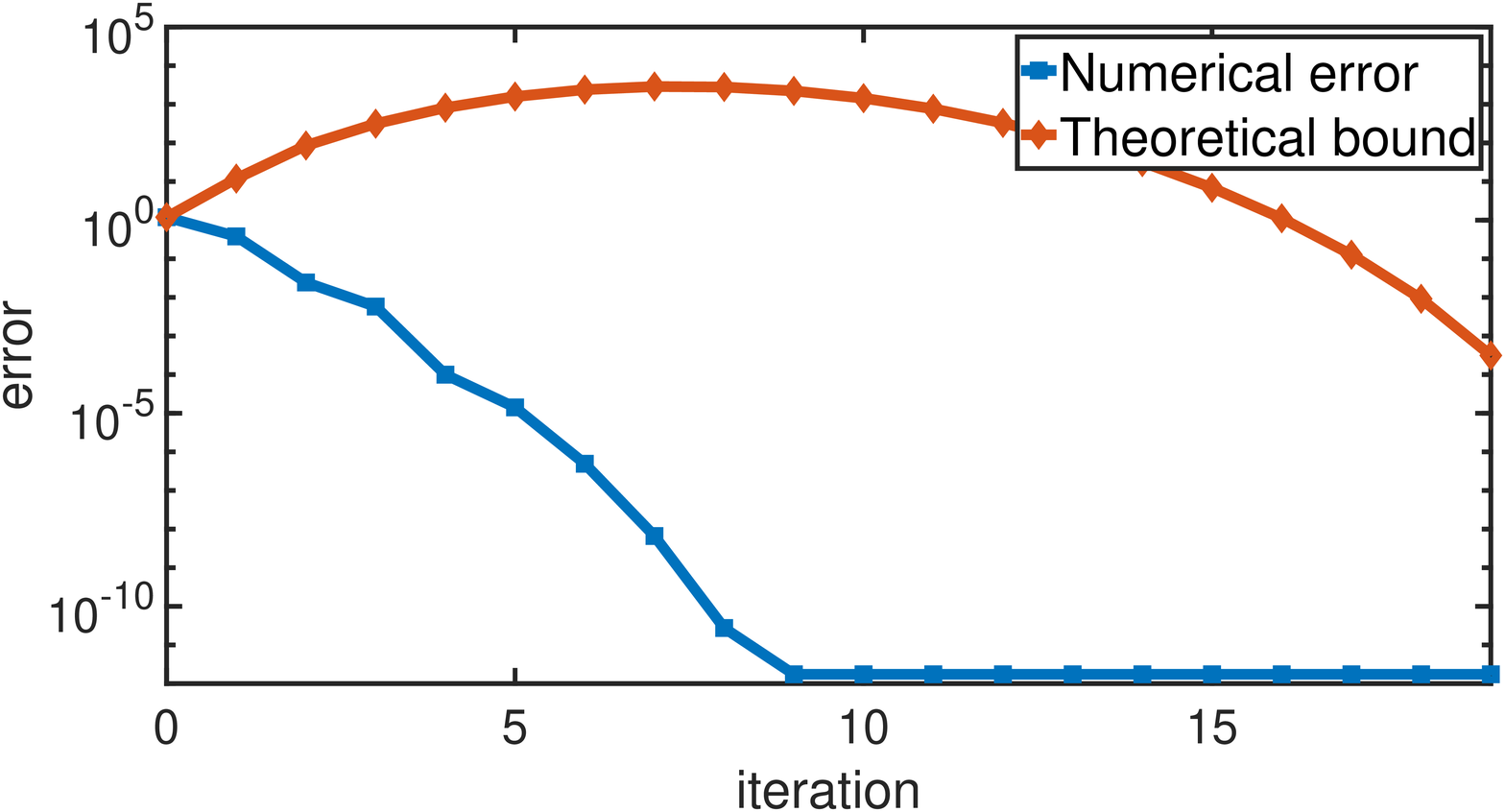} }}
     \subfloat{{\includegraphics[height=3.5cm,width=6cm]{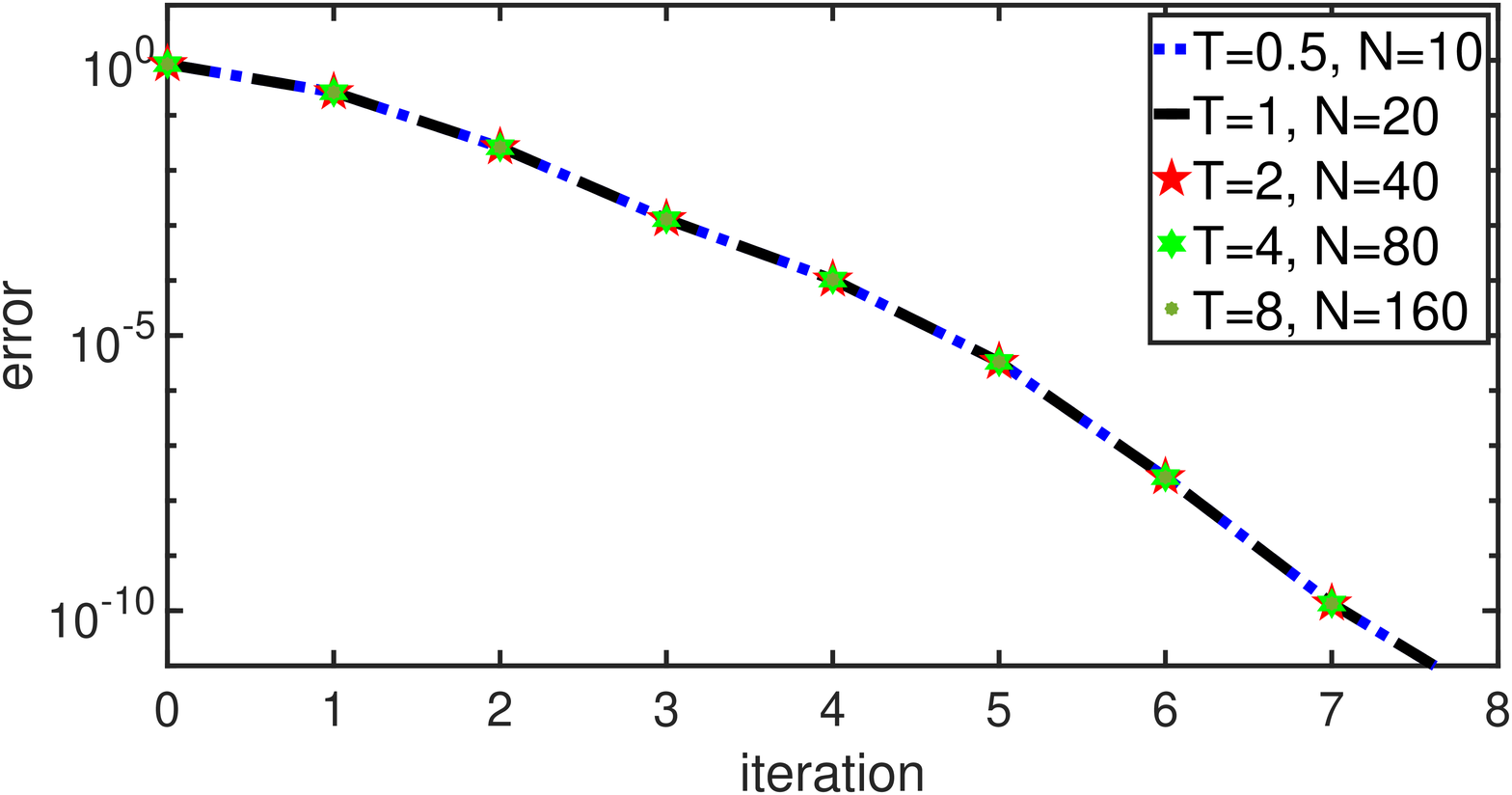} }}
    \caption{PA-III: On the left comparison of theoretical and numerical error, and on the right convergence for different $T, N$.}
    \label{diff_T_N_bound_pa3}
\end{figure}
\subsection{Numerical Experiments of NPA-I}
1D case: The nonlinear fine propagator is obtained using the Newton method with a tolerance $1e {-10}$. To implement the  theoretical bound prescribed in Theorem \ref{thm_npa1} we have to estimate the quantity $C_1$ numerically, which depends on choice of $\epsilon, J, \Delta T, u^0$. The comparison of numerical error and theoretical estimates can be seen on the left plot of Figure \ref{diff_errbound_npa1} for $\Delta T=1, h=1/64, N=20, J=200, \epsilon=0.0725$ and $C_1=0.1181$. On the right panel in Figure \ref{diff_errbound_npa1} we plot the error curves for more refined solution for $T=1, \epsilon=0.0725, h=1/64, \Delta t=1/200$. We can see that convergence is independent of mesh parameters.
\begin{figure}[h]
    \centering
    \subfloat{{\includegraphics[height=3.5cm,width=6cm]{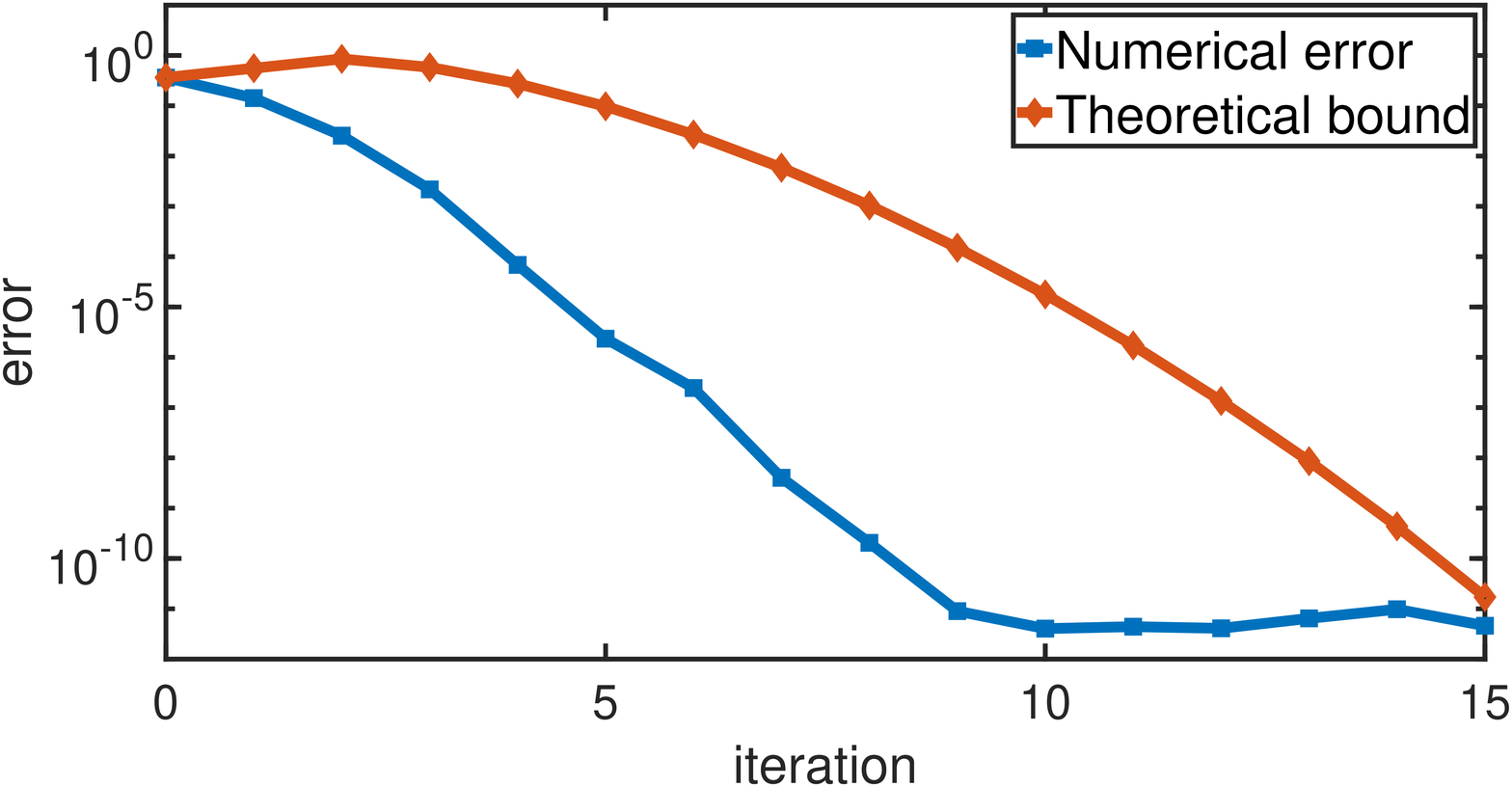} }}
     \subfloat{{\includegraphics[height=3.5cm,width=6cm]{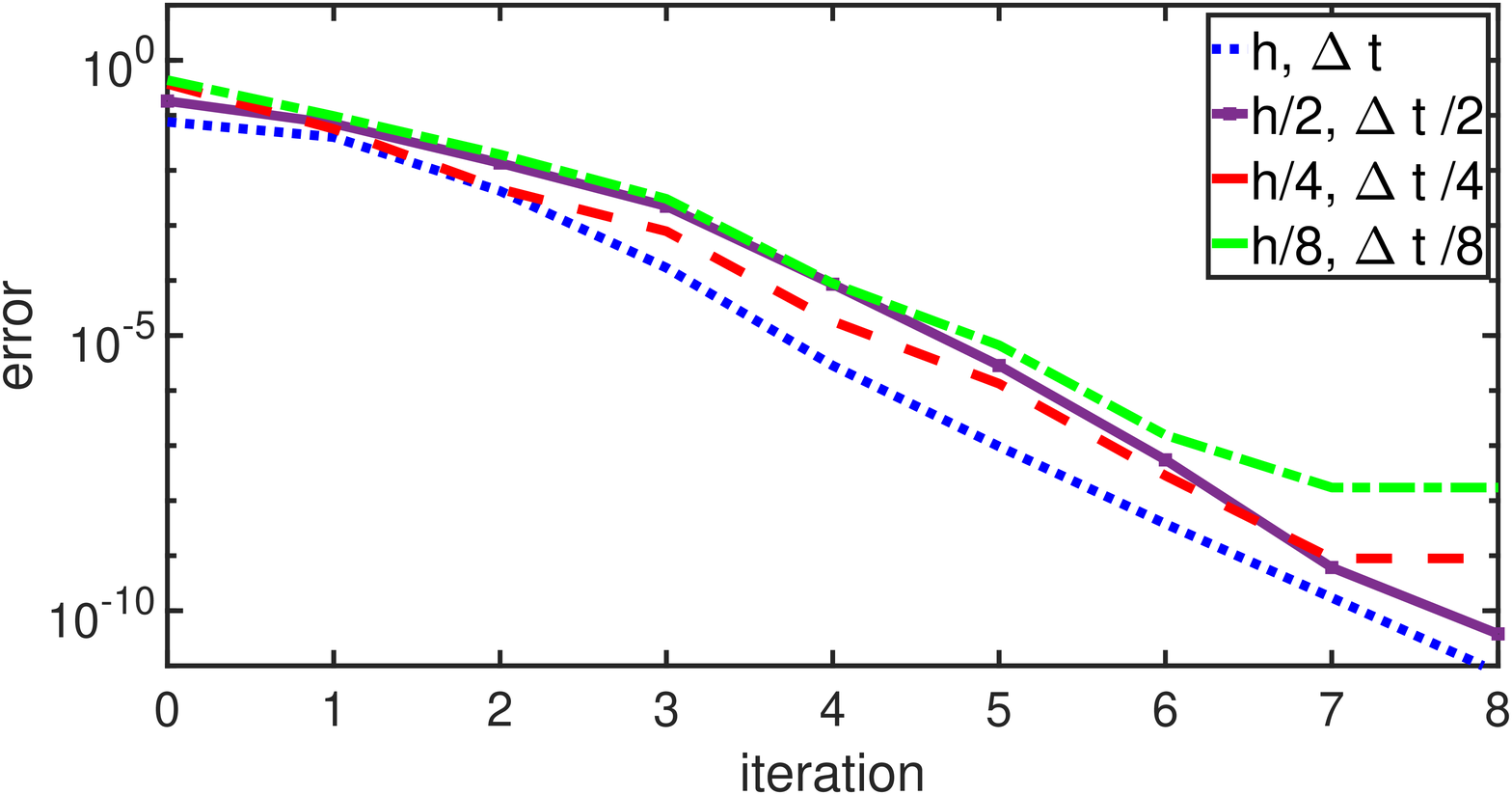} }}
    \caption{NPA-I: On the left comparison of theoretical and numerical error, and on the right convergence of refined solution.}
    \label{diff_errbound_npa1}
\end{figure}
We plot the error curves on the left in Figure \ref{diff_TN_npa1} for short as well as long time window with $\epsilon=0.0725, J=200$ and $h=1/64$. To see the dependency on the parameter $\epsilon$, we plot the error curves on the right panel in Figure \ref{diff_TN_npa1} for different $\epsilon$ by taking $T=1, N=50, J=200$. We can see that the NPA-I is independent of the choice of $\epsilon$. 
\begin{figure}
    \centering
    \subfloat{{\includegraphics[height=3.5cm,width=6cm]{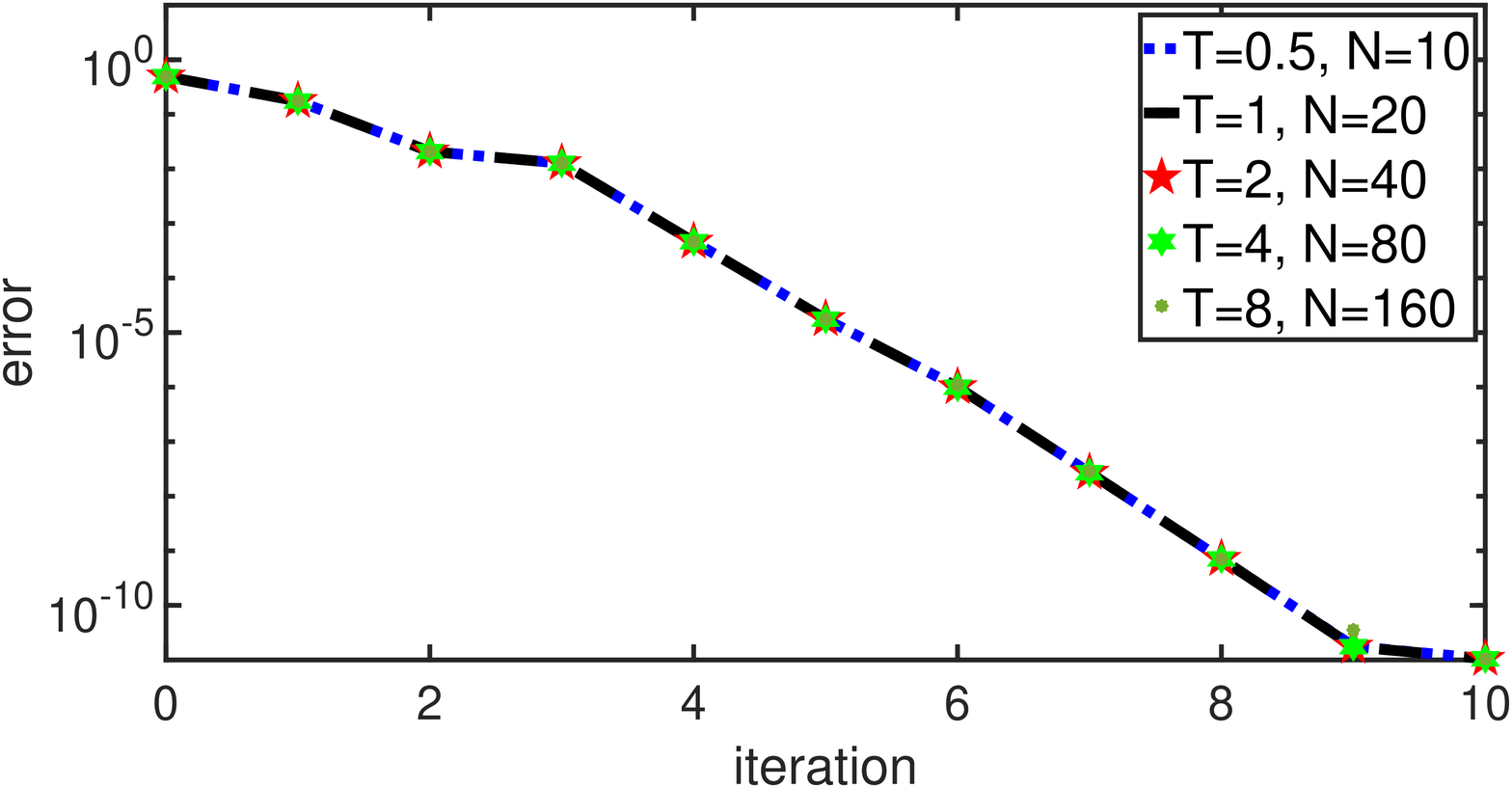} }}
     \subfloat{{\includegraphics[height=3.5cm,width=6cm]{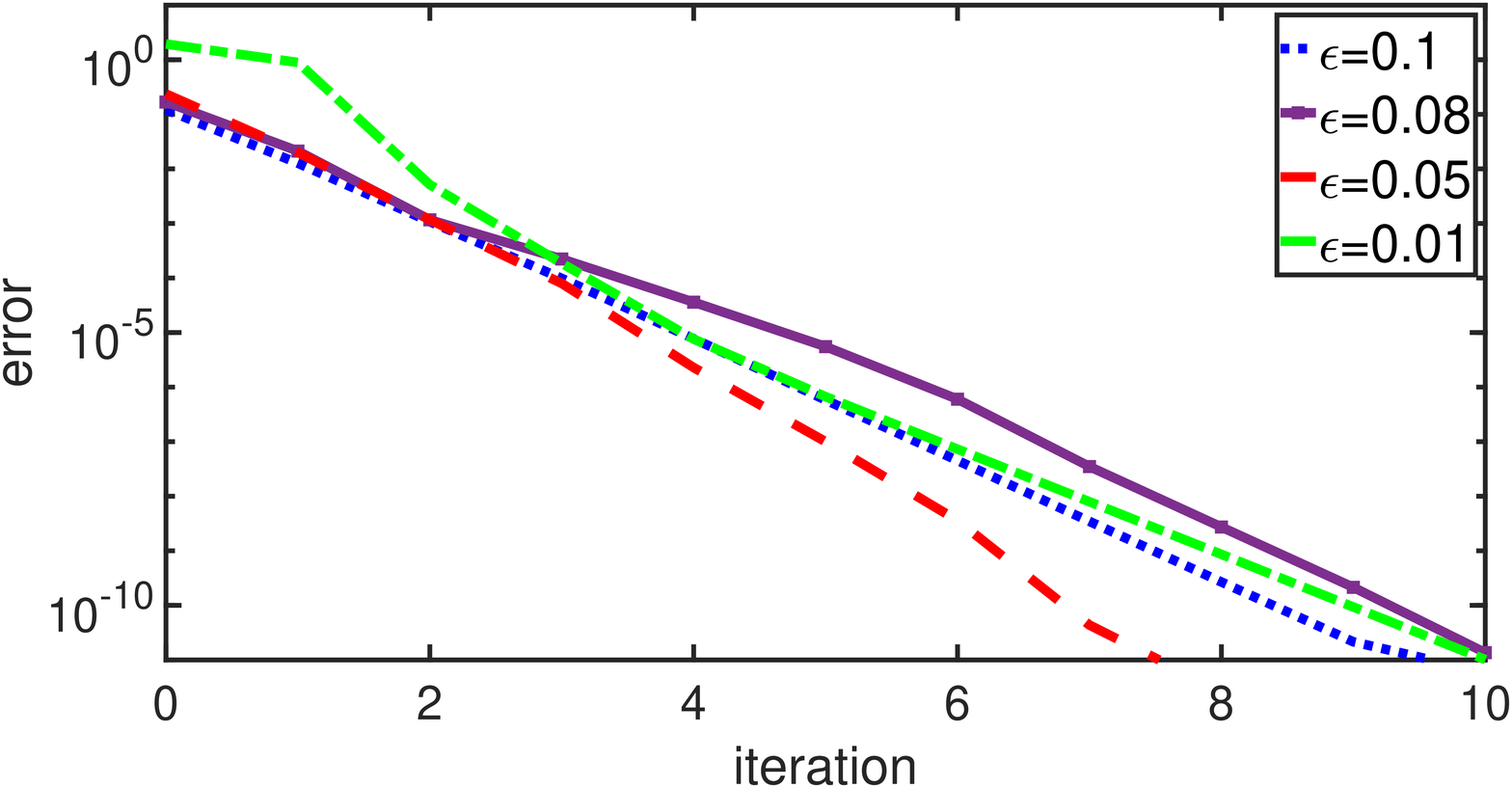} }}
    \caption{NPA-I: On the left convergence for different $T, N$, and on the right $\epsilon$ dependency on convergence.}
    \label{diff_TN_npa1}
\end{figure}
On the left panel in Figure \ref{diff_T_N_bound_npa1} we plot the error  curves with respect to different number of time slices for $T=50, \epsilon=0.0725, h=1/64, J=150$. We can see that convergence is independent of time decomposition. One can observe that a speed up of $80$ times compared to serial solve for $N=400$.

2D case: We take the discretization parameter $h=1/32$ on both direction. As we observe similar convergence behaviour in 2D as in 1D, we only plot the error curves on the right in Figure \ref{diff_T_N_bound_npa1} for short as well as long time window with $\epsilon=0.0725, J=200$. We omit the other experiments in 2D as we observe similar convergence behaviour as in 1D.
\begin{figure}
    \centering
    \subfloat{{\includegraphics[height=3.5cm,width=6cm]{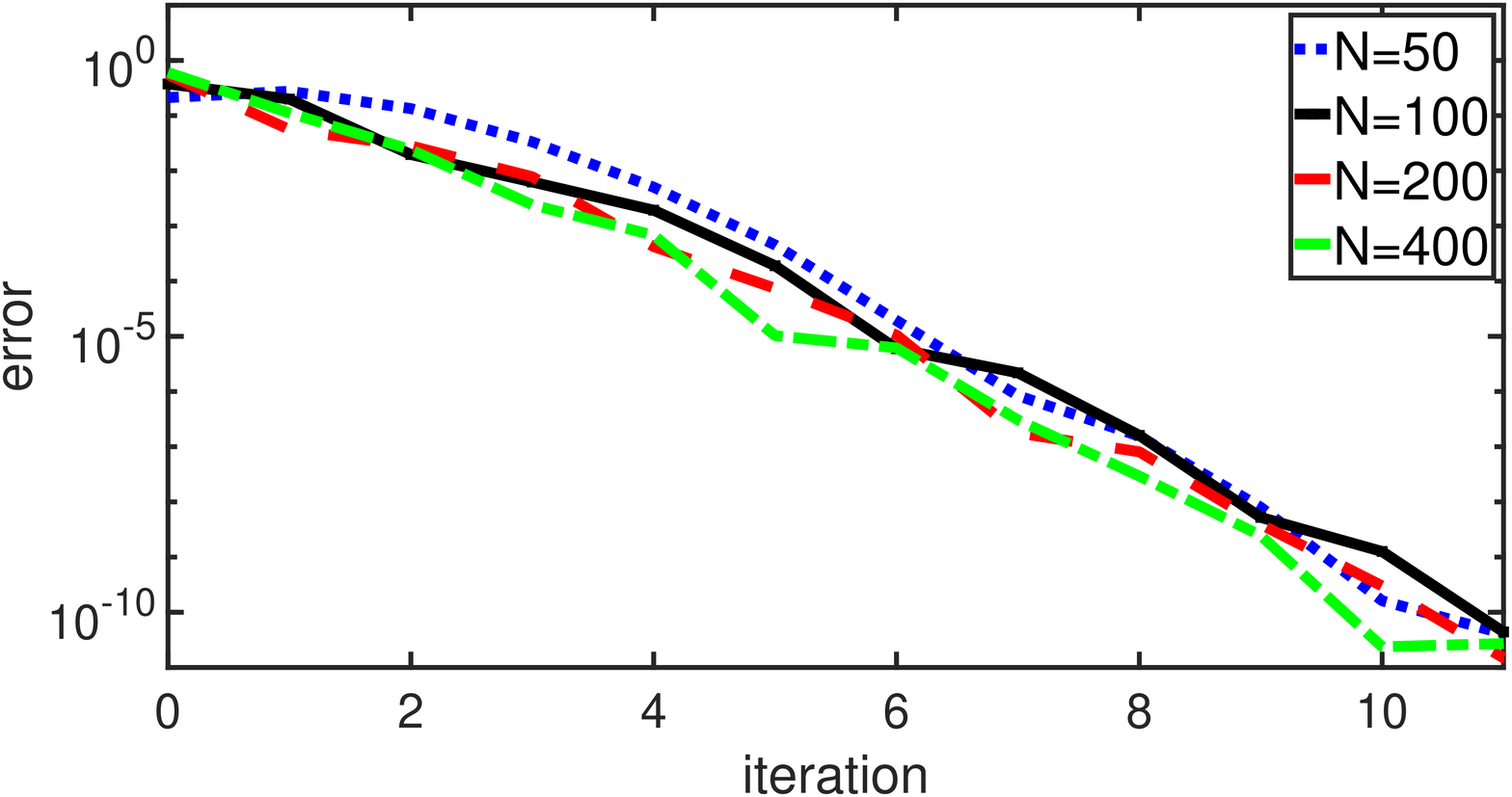} }}
     \subfloat{{\includegraphics[height=3.5cm,width=6cm]{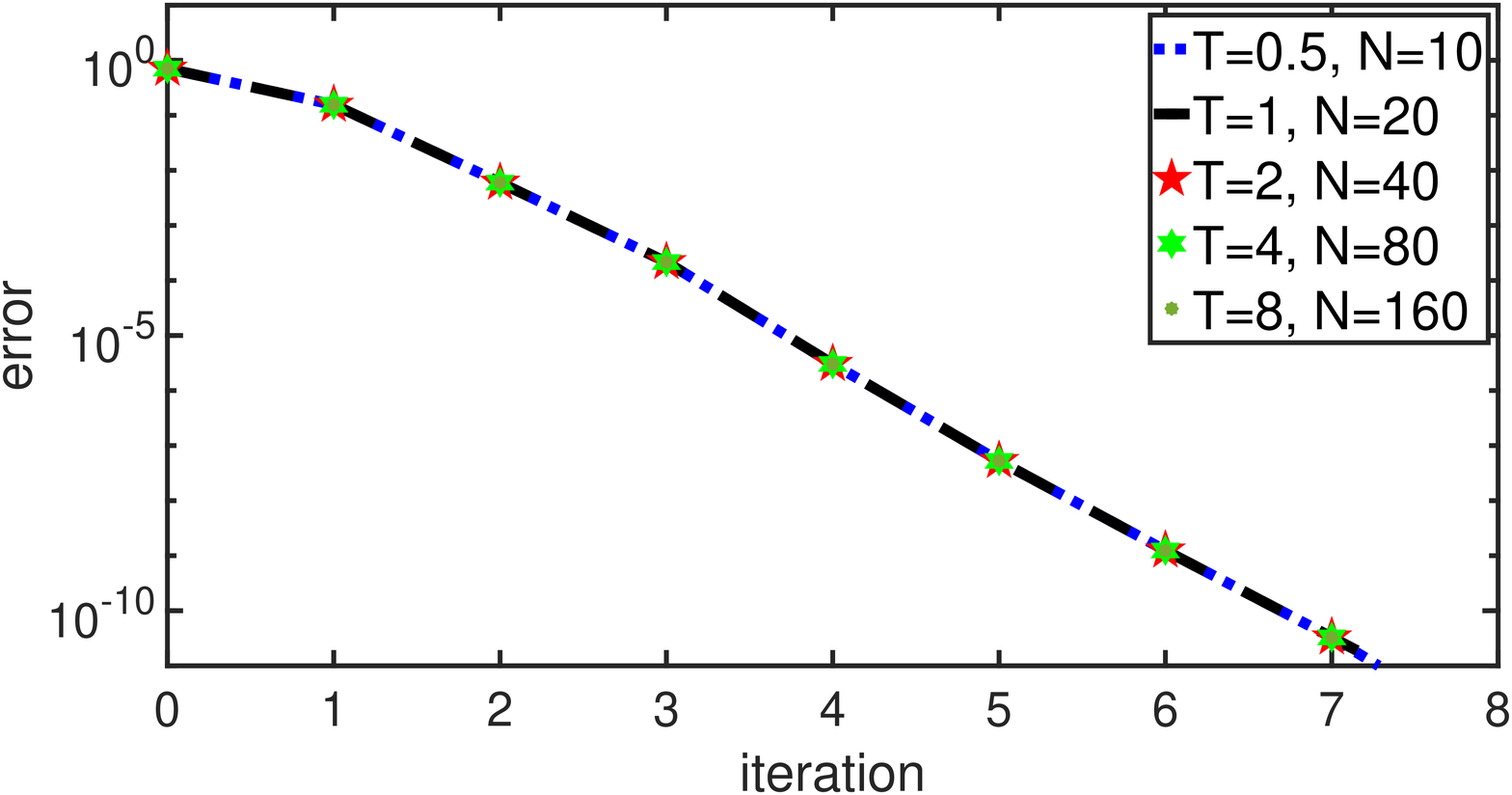} }}
    \caption{NPA-I: On the left convergence for different $N$, and on the right convergence for different $T, N$ in 2D.}
    \label{diff_T_N_bound_npa1}
\end{figure}
\subsection{Numerical Experiments of NPA-II}
1D case: In this case we have nonlinear solvers for both fine and coarse propagator by the Newton method with a tolerance $1e {-10}$. To implement the theoretical bound prescribed in Theorem \ref{thm_npa2} we have to estimate the quantity $C_1$ numerically, which depends on the choice of $\epsilon, J, \Delta T, u^0$. The comparison of numerical error and theoretical estimate can be seen on the left plot of Figure \ref{diff_errbound_npa2} for $\Delta T=1, h=1/64, N=20, J=200, \epsilon=0.0725$ and $C_1=0.1498$. On the right we plot the error curves for more refined solution for $T=1, \epsilon=0.0725, h=1/64, \Delta t=1/200$. We can see that convergence is independent of mesh parameters.
\begin{figure}[h]
    \centering
    \subfloat{{\includegraphics[height=3.5cm,width=6cm]{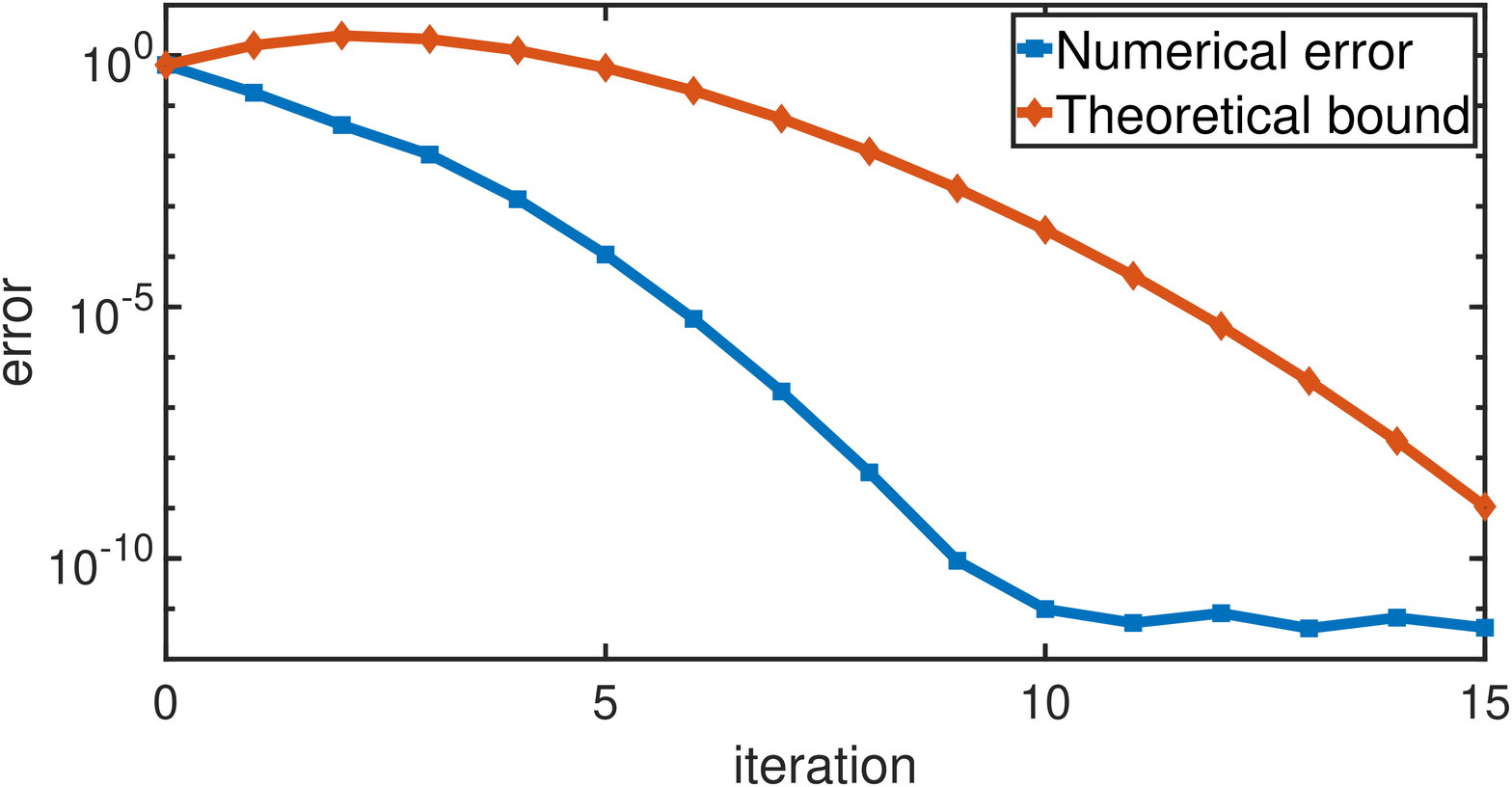} }}
     \subfloat{{\includegraphics[height=3.5cm,width=6cm]{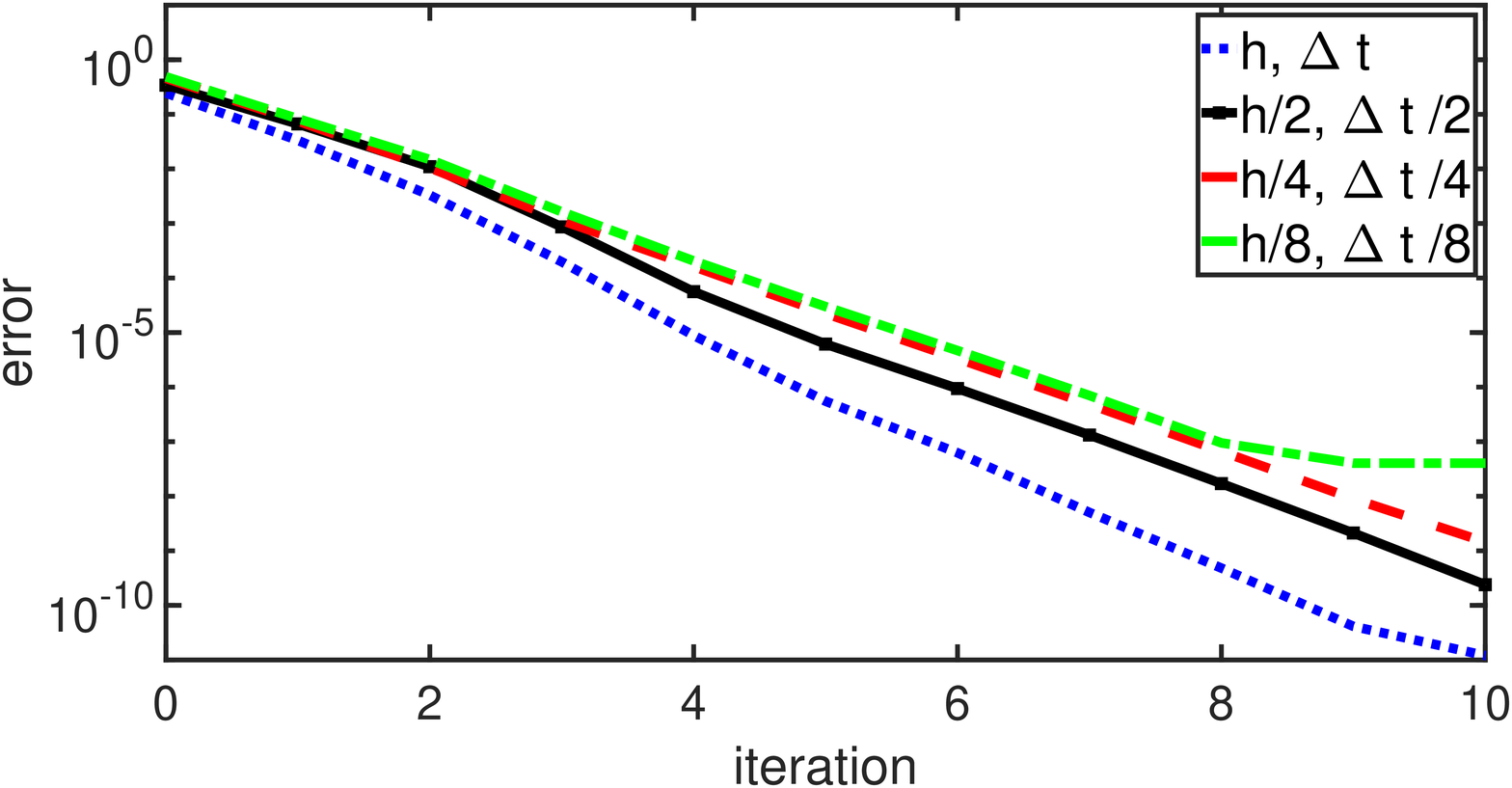} }}
    \caption{NPA-II: On the left comparison of theoretical and numerical error, and on the right convergence of refined solution.}
    \label{diff_errbound_npa2}
\end{figure}
We plot the error curves on the left in Figure \ref{diff_TN_npa2} for short as well as long time window with $\epsilon=0.0725, J=150$ and $h=1/64$. To see the dependency on the parameter $\epsilon$, we plot the error curves on the right in Figure \ref{diff_TN_npa2} for different $\epsilon$ by taking $T=1, N=50, J=200$. We can see that the NPA-II is independent of the choice of $\epsilon$. At this point we can compare NPA-I and NPA-II as both have the fine solution given by \eqref{approx1}. Between these two, NPA-II is expansive because of the nonlinear coarse solver and we take almost same number iteration to converge as in the case of NPA-I. Therefore it is better to use NPA-I while computing the nonlinear approximation of the CH equation. We skip the numerical experiments in 2D as we observe similar behaviour as in 1D.
\begin{figure}
    \centering
    \subfloat{{\includegraphics[height=3.5cm,width=6cm]{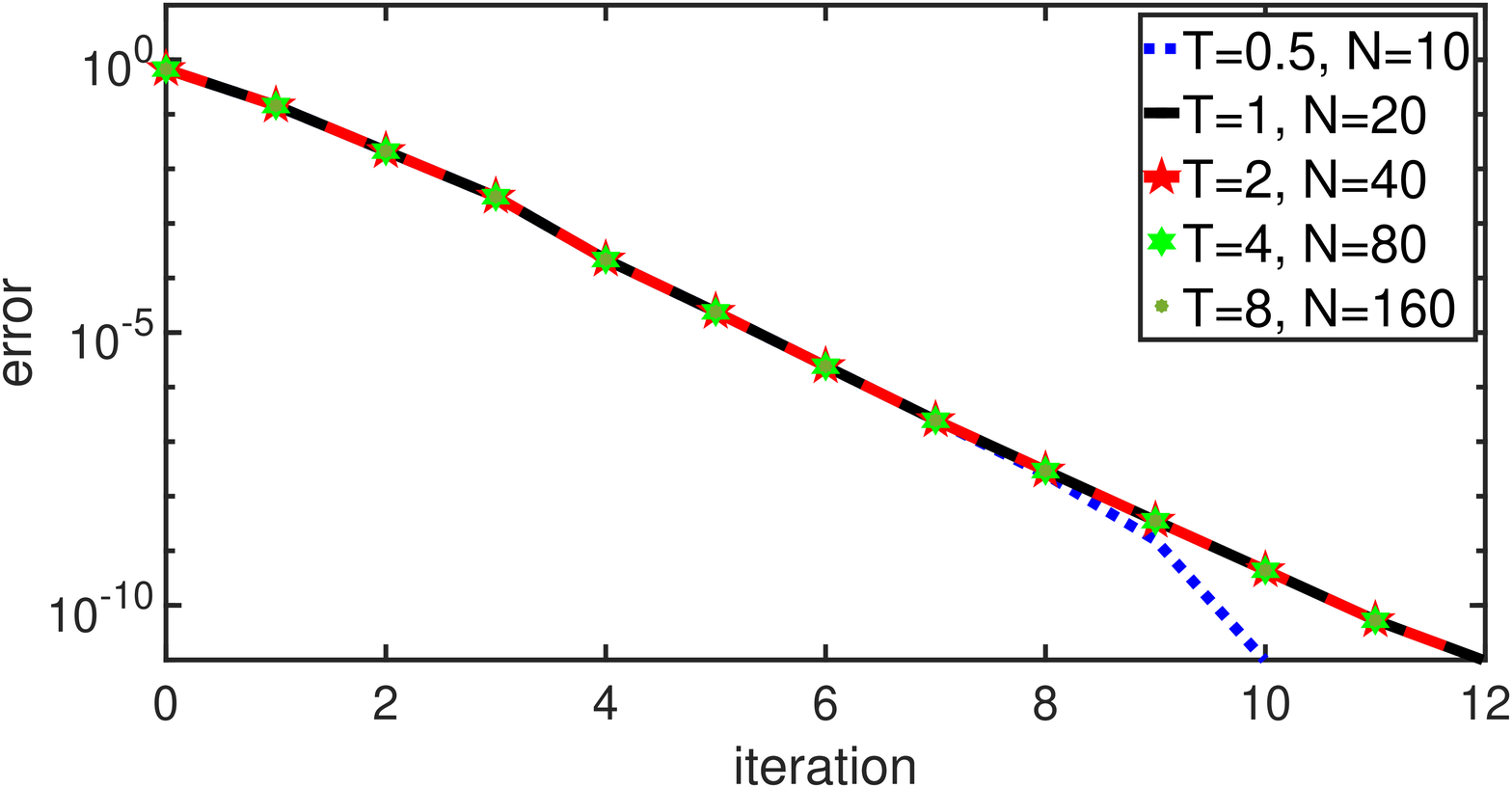} }}
     \subfloat{{\includegraphics[height=3.5cm,width=6cm]{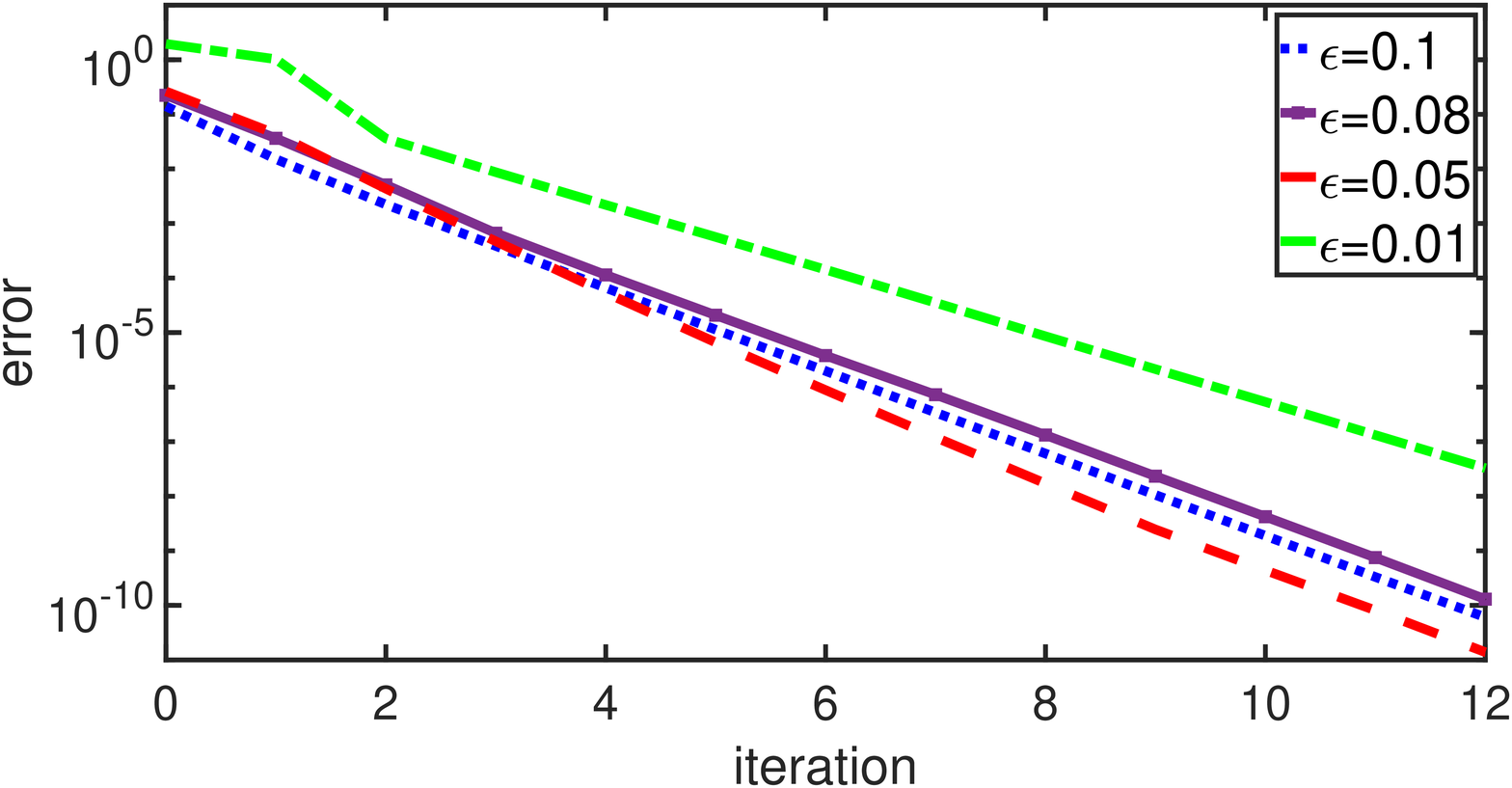} }}
    \caption{NPA-II: On the left convergence for different $T, N$, and on the right $\epsilon$ dependency on convergence.}
    \label{diff_TN_npa2}
\end{figure}

\subsection{Numerical Experiments of Neumann-Neumann method as fine solver in PA-I}
In all of the above experiments we use the scheme \eqref{approx1}, \eqref{approx2} or \eqref{approx3} as fine solver. In practise one try to solve the CH equation in much larger domain with very fine mesh, that results in a very large scale algebraic system (as the spatial dimension increases). In this context one introduce parallelism in space by using Domain Decomposition (DD) based techniques, here we use a non-overlapping DD method, namely Neumann-Neumann (NN) method. The NN method for the CH equation in space is considered in \cite{convergence2021garai} for two subdomain decomposition and in \cite{garai2021convergence} for multiple subdomain decomposition, where they use \eqref{approx1} and \eqref{approx2} to build linear and nonlinear NN solver. Here we use linear NN method as fine solver in the PA-I algorithm. In every subinterval $[T_{n-1}, T_{n}]$ we compute the solution as the following:

Let $\Omega \subset\mathbb{R}$ is decomposed into non-overlapping subdomains $\{\Omega_i, 1\leq i \leq N_0\}$.
So to solve \eqref{approx2} at each time level the NN method starts with initial guesses $g_i^{[0]}, h_i^{[0]}$ along the interfaces $\Gamma_i=\partial\Omega_i \cap \partial\Omega_{i+1}$ for $i = 1,\ldots,N_0-1$, and then it's a two step execution: at each iteration $\nu$, one first solves Dirichlet sub-problems on each $\Omega_i$ in parallel, and then compute the jump in Neumann traces on the interfaces  and one solves the Neumann subproblems on each $\Omega_i$ in parallel,
\begin{equation}\label{MNCH}
\left\{
\begin{aligned}
\begin{bmatrix} 
I & -\delta_t\Delta \\
\epsilon^2\Delta-c^2 & I 
\end{bmatrix}
\quad
\begin{bmatrix} 
u_i^{[\nu]} \\
v_i^{[\nu]} 
\end{bmatrix}
& =  \begin{bmatrix} 
f_u \\
f_v 
\end{bmatrix},
\quad \mbox{in}\,\ \Omega_i,\\
\begin{bmatrix} 
u_i^{[\nu]} \\
v_i^{[\nu]} 
\end{bmatrix}
& =  0,\quad \mbox{on}\,\ \partial\Omega_i\cap\partial\Omega,\\
\begin{bmatrix} 
u_i^{[\nu]} \\
v_i^{[\nu]} 
\end{bmatrix}
 & = 
\begin{bmatrix} 
g_{i-1}^{[\nu-1]} \\
h_{i-1}^{[\nu-1]} 
\end{bmatrix}
\quad \mbox{on}\,\ \Gamma_{i-1},\\
\begin{bmatrix} 
u_i^{[\nu]} \\
v_i^{[\nu]} 
\end{bmatrix}
 & = 
\begin{bmatrix} 
g_{i}^{[\nu-1]} \\
h_{i}^{[\nu-1]} 
\end{bmatrix}
\quad \mbox{on}\,\ \Gamma_{i},\\
\end{aligned}\right.
\end{equation}
\begin{equation}\label{NN_Nstep}
\left\{
\begin{aligned}
\begin{bmatrix} 
I & -\delta_t\Delta \\
\epsilon^2\Delta-c^2 & I 
\end{bmatrix}
\begin{bmatrix} 
\phi_i^{[\nu]} \\
\psi_i^{[\nu]} 
\end{bmatrix} 
& =  0,
\quad \mbox{in}\,\ \Omega_i,\\
\begin{bmatrix} 
\phi_i^{[\nu]} \\
\psi_i^{[\nu]} 
\end{bmatrix}
 & =  0,\quad \mbox{on}\,\ \partial\Omega_i\cap\partial\Omega,\\
 \frac{\partial}{\partial x}\begin{bmatrix} 
\phi_i^{[\nu]} \\
\psi_i^{[\nu]} 
\end{bmatrix} & = 
\frac{\partial}{\partial x}\begin{bmatrix} 
u_{i-1}^{[\nu]} - u_{i}^{[\nu]} \\
v_{i-1}^{[\nu]} - v_{i}^{[\nu]} 
\end{bmatrix},
\quad \mbox{on}\,\ \Gamma_{i-1}\\
\frac{\partial}{\partial x}\begin{bmatrix} 
\phi_i^{[\nu]} \\
\psi_i^{[\nu]} 
\end{bmatrix} & = 
\frac{\partial}{\partial x}\begin{bmatrix} 
u_i^{[\nu]} - u_{i+1}^{[\nu]} \\
v_i^{[\nu]} - v_{i+1}^{[\nu]} 
\end{bmatrix},
\quad \mbox{on}\,\ \Gamma_i,
\end{aligned}\right. 
\end{equation}

\noindent
except for the first and last subdomains, where at the physical boundaries the Dirichlet condition in the Dirichlet step and Neumann condition in the Neumann step are replaced by homogeneous Dirichlet condition. 
Then the interface traces are updated by
\[
\begin{bmatrix} 
g_i^{[\nu]} \\
h_i^{[\nu]} 
\end{bmatrix}
 = \begin{bmatrix} 
g_i^{[\nu-1]} \\
h_i^{[\nu-1]} 
\end{bmatrix} -
  \theta \begin{bmatrix} 
\phi_i^{[\nu]} - \phi_{i+1}^{[\nu]} \\
\psi_i^{[\nu]} - \psi_{i+1}^{[\nu]} 
\end{bmatrix}_{\mkern 1mu \vrule height 2ex\mkern2mu {\Gamma_i}}, 
\]
where $\theta\in(0, 1)$ is a relaxation parameter. In NN method \eqref{MNCH}-\eqref{NN_Nstep}, $\delta_t=\Delta t$ is the fine time step, $f_u= u^n, f_v=-u^n, c=(u^n)^2$, where $u^n$ is solution of the CH equation at $n$-th time step. In a similar fashion one can formulate NN method for the scheme given in \eqref{approx3} and use in PA-III as a fine solver. There is also a nonlinear  version of \eqref{MNCH} in \cite{garai2021convergence}, which can be used as a fine solver in the nonlinear Parareal case. To see the numerical experiments in 1D, we take $N_0=8 \;\text{(equal subdomain)}, \theta=1/4$. Note that the parareal solution  converges towards fine solution given by the NN method. For convergence of NN method at each time level we set the tolerance as $\parallel g_i^{[\nu+1]} - g_i^{[\nu]} \parallel_{L^2} \leq 10^{-10}$ and  $\parallel h_i^{[\nu+1]} - h_i^{[\nu]} \parallel_{L^2} \leq 10^{-10}$. The convergence of NN method described in \cite{garai2021convergence}; here we study the convergence of Parareal method PA-I to the NN solution given by \eqref{MNCH}.
We plot the error curves on the left in Figure \ref{diff_T_N_DT} for short as well as long time window with $\epsilon=0.0725, J=200$ and $h=1/128$. The left plot in Figure \ref{diff_T_N_DT} is almost identical to the left plot given in Figure \ref{diff_T_N_eps}. So we have similar convergence behaviour for NN method as fine solver with an advantage of more parallelism in the system. To see the dependency on the parameter $\Delta T$, we plot the error curves on the right in Figure \ref{diff_T_N_DT} for different $\Delta T$ by taking $T=1, J=150, \epsilon=0.0725$. We can observe that convergence is robust. 
\begin{figure}
    \centering
    \subfloat{{\includegraphics[height=3.5cm,width=6cm]{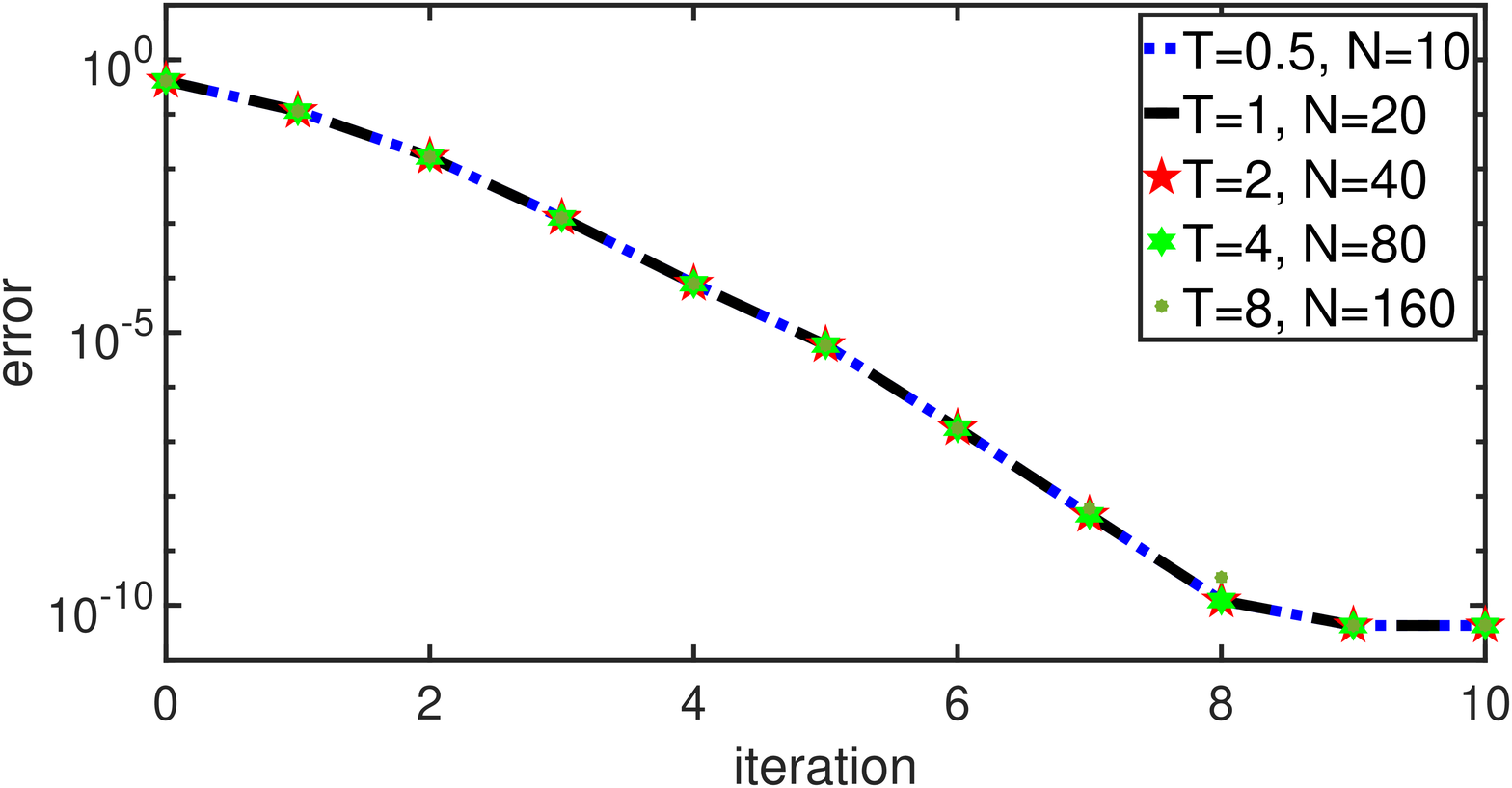} }}
     \subfloat{{\includegraphics[height=3.5cm,width=6cm]{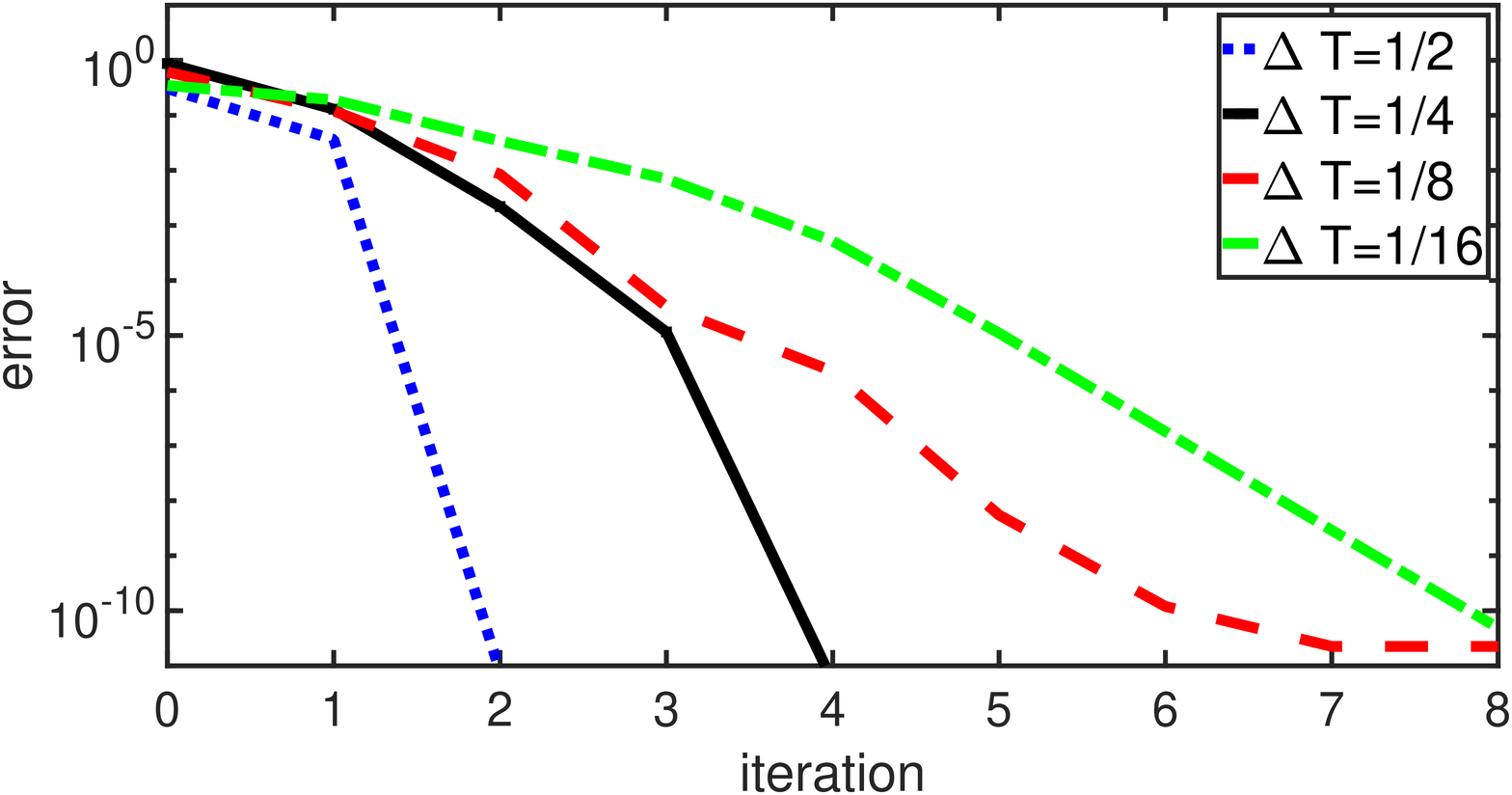} }}
    \caption{On the left: different $T, N$, and on the right: different $\Delta T$.}
    \label{diff_T_N_DT}
\end{figure}

\section{Conclusions}

We propose and studied the linear and nonlinear Parareal algorithms for the CH equation. We showed convergence of all the proposed Parareal algorithms. Numerical experiments show that proposed methods are very robust and one obtains a reasonable speed up by introducing more processor.

\section*{Acknowledgement} The authors would like to thank the CSIR (File No:09/1059(0019)/2018-EMR-I) and DST-SERB (File No: SRG/2019/002164) for the research grant and IIT Bhubaneswar for providing excellent research environment.

\bibliographystyle{siam}
\bibliography{pararealbib}

\end{document}